\documentclass[11pt,a4paper]{amsart}
\usepackage{mathrsfs}
\usepackage{amsfonts}
\usepackage{amssymb}
\usepackage{amsfonts, amscd, amsmath, mathrsfs, amssymb, amsthm, amsxtra, stmaryrd, bbding, epsfig, graphicx, latexsym, url, mathbbol, bbold,xcolor,comment}

\usepackage[papersize={8in,11in},textwidth=14.9cm,textheight=21cm,centering]{geometry}

\usepackage[textsize=small,color=yellow]{todonotes}

\usepackage[normalem]{ulem}
\newcommand\redsout{\bgroup\markoverwith{\textcolor{red}{\rule[0.5ex]{2pt}{0.4pt}}}\ULon}
\usepackage{soul}
\setstcolor{red}
\usepackage{cancel}

\usepackage{enumerate,enumitem}

\usepackage[colorlinks=true,citecolor=blue,linkcolor=blue]{hyperref}
\hypersetup{
	pdfstartpage=1,
	pdfstartview=FitH}

\DeclareFontFamily{U}{mathx}{\hyphenchar\font45}
\DeclareFontShape{U}{mathx}{m}{n}{
	<5> <6> <7> <8> <9> <10>
	<10.95> <12> <14.4> <17.28> <20.74> <24.88>
	mathx10
}{}
\DeclareSymbolFont{mathx}{U}{mathx}{m}{n}
\DeclareMathAccent{\widecheck}{\mathalpha}{mathx}{"71}

\usepackage{caption}
\numberwithin{equation}{section}

\allowdisplaybreaks

\newtheorem{theorem}{Theorem}[section]
\newtheorem{lemma}{Lemma}[section]

\newtheorem{proposition}{Proposition}[section]

\makeatletter
\newcounter{roem}
\renewcommand{\theroem}{\Roman{roem}}

\newcommand{\c@org@eq}{}
\let\c@org@eq\c@equation
\newcommand{\org@theeq}{}
\let\org@theeq\theequation

\newcommand{\setroem}{
	\let\c@equation\c@roem
	\let\theequation\theroem}

\newcommand{\setarab}{
	\let\c@equation\c@org@eq
	\let\theequation\org@theeq}
\makeatother

\newtheorem*{claim*}{Claim}

\theoremstyle{remark}
\newtheorem{remark}[theorem]{\bf Remark}

\newcommand{\ds}{\displaystyle}

\newcommand{\N}{\mathbb{N}}
\newcommand{\Q}{\mathbb{Q}}
\newcommand{\R}{\mathbb{R}}
\newcommand{\T}{\mathbb{T}}
\newcommand{\Z}{\mathbb{Z}}

\newcommand{\cF}{\mathcal{F}}

\newcommand{\cQ}{\mathcal{Q}}

\definecolor{brown}{RGB}{165,42,42}

\def\le{\leqslant}
\def\leq{\leqslant}
\def\ge{\geqslant}
\def\geq{\geqslant}

\usepackage{graphicx}
\usepackage{tikz}

\usepackage{tcolorbox}

\newcommand\blfootnote[1]{%
	\begingroup
	\renewcommand\thefootnote{}\footnote{#1}%
	\addtocounter{footnote}{-1}%
	\endgroup
}

\begin{document}
	\title[M{\"o}bius Disjointness Conjecture for a skew product]
	{M{\"o}bius Disjointness Conjecture for a skew product  \\
		on a circle and the Heisenberg nilmanifold}
	\author[Y.-K. Lau and J. Ma]{Yuk-Kam Lau  and  Jing Ma}
	
	\address{
		Weihai Institute for Interdisciplinary Research, Shandong University, China \mbox{\rm and} 
		Department of Mathematics, The University of Hong Kong, Pokfulam Road, Hong Kong}
	\email{yklau@maths.hku.hk}
	
	\address{School of Mathematics, Jilin University, Changchun 130012, PR China}
	\email{jma@jlu.edu.cn}
	\date{\today}
	\subjclass[2020]{11N37, 11L03, 37A44}
	\keywords{M\"obius disjointness, Sarnak's conjecture, measure complexity, polynomial rate of rigidity}
	
	
	\begin{abstract}
		We establish Sarnak's conjecture on M\"obius disjointness for the dynamical system of a skew product on a circle and the three-dimensional Heisenberg nilmanifold, first studied by Wen Huang, Jianya Liu and Ke Wang. We advance the work of Huang, Liu, Wang, and their followers to a broad generality by removing the previously imposed restrictive symmetry condition.
	\end{abstract}
	
	\blfootnote{Corresponding author: Jing Ma.}
	\maketitle
	
	\section{Introduction}
	
	Let $X$ be a compact metric space and $T: X\rightarrow X$ a continuous map. Following \cite{Sarnak2}, we refer the pair $(X, T)$ as a flow, which is also known as a topological dynamical system\footnote{Some authors require $T$ to be a homeomorphism, which will be described as an invertible topological dynamical system.}.  Sarnak \cite{Sarnak} formulated a M\"obius randomness principle which speculated that the M\"obius function $\mu(n)$ is linearly disjoint from any deterministic flow\footnote{A flow is deterministic if its topological entropy is zero.} $F$. This speculation is now a Sarnak's conjecture, also expressed as
	
	\noindent
	{\bf M\"obius disjointness conjecture}:
	\vskip2mm
	\centerline{$\ds 
		\lim_{N\rightarrow\infty}\frac 1N\sum_{n\leq N} \mu(n)f(T^{n}x)=0
		$, $\forall$
		$f\in C(X)$, $\forall$ $x\in X$.}  
	\noindent
	Here $C(X)$ denotes the space of all continuous functions on $X$.
	
	This conjecture of Sarnak has been established for certain cases, for example in Bourgain \cite{Bourgain2},  Bourgain-Sarnak-Ziegler \cite{BSZ}, Green-Tao \cite{GT}, Liu-Sarnak \cite{LS, LS2},
	Wang \cite{Wang}, Peckner \cite{Peckner}, Huang-Wang-Ye \cite{HWY}, Litman-Wang \cite{LW}, and Huang-Liu-Wang \cite{HLW}. Interested readers are referred to the survey paper \cite{Ferenczi}. As an attempt at further evidence, one may firstly study the linear disjointness between $\mu$ and distal flows because  distal flows are of zero-entropy, by \cite{Parry}. 
	A flow $(X, T)$  is called distal if
	\(
	\inf_{n\geq0}d(T^{n}x, T^{n}y)>0
	\)
	whenever $x\neq y$, where $d$ is a metric on $X$.   
	Furstenberg’s structure theorem of minimal distal flows \cite{Furstenberg2} asserts that skew products are building blocks of distal flows. Consequently one can further boil down the study to the case of distal flows given by skew products. 
	
	A skew product consists of a direct product of two spaces and an action. An example is the skew product $T= T_{\alpha,h}$ on the $2$-torus $\mathbb{T}^2=(\mathbb{R}/\mathbb{Z})^2$ given by
	\begin{equation*}
		T_{\alpha,h} :(x, y)\mapsto(x+\alpha, y+h(x)),
	\end{equation*}
	where $\alpha\in[0, 1)$ and $h: \mathbb{T}\rightarrow\mathbb{T}$ is a continuous function. 
	The flow $(\mathbb{T}^2, T_{\alpha,h})$ is distal, and its 
	M{\"o}bius disjointness  was firstly studied by Liu and Sarnak  in \cite{LS, LS2}. 
	They showed in \cite{LS} that the M{\"o}bius Disjointness Conjecture is true for $(\mathbb{T}^2, T_{\alpha,h})$ for any $\alpha$ and any analytic $h$ whose Fourier coefficients fulfil some conditions. 
	
	A salient point of their result, especially in the study of KAM theory, is its generality for all $\alpha$. 
	The required conditions for $h$ are a main concern, which has been pursued subsequently. 
	Wang \cite{Wang} showed the sufficiency of analytic $h$ (without extra conditions on the Fourier coefficients). Huang-Wang-Ye \cite{HWY}, Kanigowski-Lema\'{n}czyk-Radziwill \cite{KLR} and de Faveri \cite{Faveri} further weakened the condition of $h$ from  being analytic to $C^\infty$-smooth, $C^{2+\varepsilon}$-smooth, and  $C^{1+\varepsilon}$-smooth respectively.\footnote{For real $r>0$, a $C^r$-smooth periodic real-valued function $f$ on $\R$ is characterized by the estimate of its Fourier coefficients: $\widehat{f}(n)\ll_r n^{-r}$ for all $|n|\ge 1$. 
	}
	
	These recent works have brought in some notions of dynamical systems scholars to prove M\"obius disjointness for many cases, not restricted to the skew product on $\mathbb{T}^2$.  Suppose $T$ is a homeomorphism on the compact space $X$.  Let $M(X,T)$ be the set of all $T$-invariant Borel probability measures on $X$. The work in \cite{KLR} introduces a way to  derive M\"obius disjointness from the polynomial rate of rigidity of $(X,T,\nu)$ for all $\nu\in M(X,T)$. A few years earlier, in \cite{HWY}, it is shown that if $X$ is endowed a metric, a sub-polynomial measure complexity of $(X,T,\nu,d)$ for all $\nu\in M(X,T)$ implies the M\"obius disjointness of $(X,T)$. We shall give more details in Section~\ref{RM}.

	
	Recently, Huang-Liu-Wang \cite{HLW} considered the case of the skew product on $\mathbb{T}\times \Gamma\backslash G$, where $\Gamma\backslash G$ is the $3$-dimensional Heisenberg nilmanifold. 
	Explicitly, $\T$ is still the unit circle, and let 
	\begin{eqnarray}\label{GGamma}
		G=\begin{pmatrix}\begin{smallmatrix} 1 & \R& \R\\ & 1 & \R\\ & & 1\end{smallmatrix}\end{pmatrix} 
		\quad \mbox{ and } \quad 
		\Gamma =\begin{pmatrix}\begin{smallmatrix} 1 & \Z& \Z\\ & 1 & \Z \\ & & 1\end{smallmatrix}\end{pmatrix}.
	\end{eqnarray} 
	The skew product on $\mathbb{T}\times\Gamma\backslash G$ is given by the map  $S_\alpha:\mathbb{T}\times\Gamma\backslash G \to \mathbb{T}\times\Gamma\backslash G$,  
	\begin{equation}\label{S}
		S_\alpha :(t, \Gamma g)\mapsto
		\left(
		t +\alpha, \Gamma g
		\begin{pmatrix}
			\begin{smallmatrix}
				1 & \varphi(t) & \psi(t) \\
				0 & 1 & \eta(t) \\
				0 & 0 & 1    
			\end{smallmatrix}
		\end{pmatrix}
		\right),
	\end{equation}
	where $\varphi$, $\eta$ and $\psi$ are  periodic functions with period $1$, and $\alpha\in [0,1)$. We suppress the dependence of these functions in the notation of $S_\alpha$ for simplicity. Assume 
	\begin{enumerate}[leftmargin=20mm]
		\item[(i)] $\varphi$, $\eta$ and $\psi$ are $C^{\infty}$-smooth.
	\end{enumerate} 
	\noindent
	The  flow $(\mathbb{T}\times\Gamma\backslash G, S_\alpha)$, abbreviated as $({\rm X},S_\alpha)$, is  shown to be distal. Assume further the conditions
	\begin{enumerate}[leftmargin=20mm]
		\item[(ii)] $\varphi = \eta$,
		\item[(iii)] $\varphi$ and hence $\eta$ have zero mean, i.e. $\int_0^1 \varphi(t)\,\textup{d} t=\int_0^1 \eta(t)\,\textup{d} t=0$.
	\end{enumerate} 
	Huang et al. showed that for rational $\alpha$, the M\"obius disjointness conjecture for $({\rm X}, S_\alpha)$ holds, and for irrational $\alpha$, 
	the flow $({\rm X},S_\alpha)$ has sub-polynomial measure complexity  with respect to some natural metric and any measure in $M({\rm X}, S_\alpha)$. Consequently under the conditions (i)-(iii), the M\"obius disjointness conjecture for $({\rm X}, S_\alpha)$ holds for all $\alpha$. See Appendix~II and Theorem~1.1 of \cite{HLW}.
	
	Among the three conditions, condition (ii) is the most restrictive, as illustrated below: 
	$$
	\begin{pmatrix}
		\begin{smallmatrix}
			1 & a &  \\
			& 1 & b \\
			&  & 1    
		\end{smallmatrix}
	\end{pmatrix} 
	\begin{pmatrix}
		\begin{smallmatrix}
			1 & c &  \\
			& 1 & d \\
			&  & 1    
		\end{smallmatrix}
	\end{pmatrix}
	= 
	\begin{pmatrix}
		\begin{smallmatrix}
			1 & a+c & ad \\
			& 1 & b+d \\
			&  & 1    
		\end{smallmatrix}
	\end{pmatrix}
	\ \mbox{ while } \ 
	\begin{pmatrix}
		\begin{smallmatrix}
			1 & c &  \\
			& 1 & d \\
			&  & 1    
		\end{smallmatrix}
	\end{pmatrix} 
	\begin{pmatrix}
		\begin{smallmatrix}
			1 & a &  \\
			& 1 & b \\
			&  & 1    
		\end{smallmatrix}
	\end{pmatrix}
	= 
	\begin{pmatrix}
		\begin{smallmatrix}
			1 & a+c & bc \\
			& 1 & b+d \\
			&  & 1    
		\end{smallmatrix}
	\end{pmatrix}
	.
	$$
	The influence on the center, $ad$ and $bc$, generally differs. Imposing the conditions $a = b$ and $c = d$ restores symmetry and, to some extent, alleviates the non-commutativity inherent in the Heisenberg group. Motivated by the $\mathbb{T}^2$ case, it is natural to ask whether a weaker smoothness condition than (i) might be sufficient. These questions have been explored by He and Wang \cite{HW}, Ma and Wu \cite{MW}.
	
	In \cite{HW}, keeping $\eta=\varphi$ but relaxing $\varphi (=\eta)$ and $\psi$ to be $C^{2+2\varepsilon}$- and $C^{1+\varepsilon}$-smooth, the polynomial rate of rigidity for $({\rm X},S_\alpha)$ was established, hence the M\"obius disjointness. This work  
	combines the ideas from Huang-Liu-Wang \cite{HLW}, Kanigowski-Lema\'{n}czyk-Radziwill \cite{KLR} and de Faveri \cite{Faveri}. 
	Ma and Wu \cite{MW} followed \cite{HLW} more and {\it relaxed the condition (ii)} but kept the smoothness assumption  (i). In \cite[Theorem 1.1 and 1.2]{MW}, the M\"obius disjointness in the case of rational $\alpha$ was derived without the constraint $\eta=\varphi$ while the sub-polynomial measure complexity for irrational $\alpha$ can be established with the condition  $\eta=k\varphi$ in place of (ii), where $k$ is a (fixed) nonzero real number. 
	
	
	In this paper our primary goal is to {\it remove the condition (ii)} with a relaxation of (i) for the M\"obius disjointness for $(\mathbb{T}\times\Gamma\backslash G, S_{\alpha})$. This is Theorem~\ref{thm} below, (the case of irrational $\alpha$) which is a consequence of Theorem~\ref{intd}, because by Proposition~\ref{prop2.1}, the polynomial rate of rigidity (PR rigidity) is a sufficient condition for the M\"obius disjointness. 
	In addition to the PR rigidity, we can show  the sub-polynomial measure complexity without assuming condition (ii). 
	Precisely we get the following theorems.

	
	\begin{theorem}\label{intd}
		Let $\mathbb{T}$ be the unit circle and $\Gamma\backslash G$ the $3$-dimensional Heisenberg nilmanifold. 
		Let $\alpha\in [0, 1)$ and $S_\alpha$ be the skew product on $\mathbb{T}\times\Gamma\backslash G$ defined in \eqref{S}, where $\varphi, \eta, \psi$ are periodic functions from $\mathbb{R}$ to $\mathbb{R}$ with period $1$ and both $\varphi$ and $\eta$ have zero mean.
		
		Let $0<\varepsilon< 10^{-2}$. Assume that  $\psi$ is  $C^{1+\varepsilon}$-smooth and  $\varphi$, $\eta$ are $C^{2+2\varepsilon}$-smooth. 
		Then $(\T\times \Gamma\backslash G, S_\alpha,\nu)$ has PR rigidity for all $\nu\in M(\T\times \Gamma\backslash G, S_\alpha)$ and irrational $\alpha$.
	\end{theorem}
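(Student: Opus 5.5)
The plan is to verify PR rigidity directly. Recall from Section~\ref{RM} that this asks for $\delta>0$ and a sequence $Q_n\to\infty$ such that $\sum_{|j|\le Q_n^{\delta}}\|f\circ S_\alpha^{jQ_n}-f\|_{L^2(\nu)}^2\to0$ for $f$ in a dense class. I will in fact try to prove the stronger, measure‑free statement that $S_\alpha^{jQ_n}$ is uniformly close to the identity for all $|j|\le Q_n^{\delta}$ and all points of $\T\times\Gamma\backslash G$. Applied to Lipschitz $f$, this gives the required estimate simultaneously for every $\nu\in M(\T\times\Gamma\backslash G,S_\alpha)$.

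\emph{Step 1 (the cocycle).} Iterating \eqref{S} gives $S_\alpha^n(t,\Gamma g)=(t+n\alpha,\Gamma g M_n(t))$. Here $M_n(t)$ is upper unitriangular with first superdiagonal entries the Birkhoff sums $\varphi_n(t)=\sum_{k<n}\varphi(t+k\alpha)$ and $\eta_n(t)$. Its central entry is $\psi_n(t)+c_n(t)$, where
\[
c_n(t)=\sum_{0\le k<l<n}\varphi(t+k\alpha)\eta(t+l\alpha)=\sum_{l<n}\varphi_l(t)\eta(t+l\alpha).
\]
\emph{Step 2 (abelian part).} Let $q=q_n$ be a continued‑fraction denominator of $\alpha$. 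Expanding in Fourier series,
\[
\varphi_q(t)=\sum_{m\ne0}\widehat\varphi(m)e(mt)\frac{e(mq\alpha)-1}{e(m\alpha)-1}.
\]
I will split at $|m|<q$ and $|m|\ge q$, and use $\|m\alpha\|\ge 1/(2q)$ for $|m|<q$ together with the decay $\widehat\varphi(m)\ll|m|^{-2-2\varepsilon}$. This should give $\|\varphi_q\|_\infty,\|\eta_q\|_\infty\ll q^{-\varepsilon'}$ when $q_{n+1}$ is not too small relative to $q_n$. The case of large $q_{n+1}$ is handled as in \cite{KLR,Faveri} by choosing the good subsequence appropriately. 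The same argument applied to $\psi$, which is $C^{1+\varepsilon}$, controls $\psi_q$ up to its mean. The mean times $q$ is then a constant central shift, to be treated in Step~4.

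\emph{Step 3 (the non‑commutative term; main obstacle).} Without $\varphi=\eta$, the quadratic term $c_q$ no longer telescopes. I will use the truncated coboundary $u=u_q$ with $\widehat u(m)=\widehat\varphi(m)/(e(m\alpha)-1)$ for $|m|<q$. This gives $\varphi_l(t)=u(t+l\alpha)-u(t)+O(\text{tail})$, where the tail is controlled using $C^{2+2\varepsilon}$ (this is where the extra derivative is needed). Then
\[
c_q(t)=\sum_{l<q}(u\eta)(t+l\alpha)-u(t)\eta_q(t)+\text{error}.
\]
The second term is small by Step~2. The first is a Birkhoff sum of $u\eta$, which by a Denjoy–Koksma type estimate is $q\int_0^1 u_q\eta+o(1)$. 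Thus $M_q(t)$ is, up to $o(1)$, a central element $z(q C_q)$ with $C_q=\int u_q\eta+\int\psi$. Composing $j$ such blocks via $M_{jq}(t)=\prod_{i<j}M_q(t+iq\alpha)$, the cross terms $\varphi_q\eta_q$ are $O(j^2q^{-2\varepsilon'})$. Hence $M_{jq}(t)=z(jqC_q)\,(1+o(1))$ uniformly for $j\le q^{\delta}$ with $\delta$ small.

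\emph{Step 4 (removing the central drift).} A genuine central rotation by $jqC_q$ would destroy rigidity on functions with nontrivial central character. So I replace $q$ by $Q=aq$ with $1\le a\le q^{\delta'}$, chosen by Dirichlet's theorem so that $\|aqC_q\|\le q^{-\delta'}$. Then $\|Q\alpha\|$ is still small, and repeating Steps~2–3 with $Q$ makes all entries of $M_{jQ}$ close to $\Gamma$‑integers for $j\le Q^{\delta}$. This yields uniform smallness of $d(S_\alpha^{jQ}x,x)$, and hence PR rigidity for every invariant $\nu$. I expect Step~3 to be the main obstacle: it requires controlling the bilinear sum $c_q$ uniformly with only $C^{2+2\varepsilon}$ regularity, and also checking that $C_q$ is stable enough in $q$ for Step~4 to close.
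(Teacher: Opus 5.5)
Your outer architecture matches the paper's. The drift you find, $q\int u_q\eta=q\sum_{0<|m|<q}\widehat\varphi(m)\widehat\eta(-m)/(e(m\alpha)-1)$, is exactly the paper's $-n\Sigma^{<,2}_{q_m,0}$ coming from the diagonal $u+v=0$ of $H_n$. Your Dirichlet multiplier $a\le q^{\delta'}$ is the paper's $v_m$.

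However, Step 3, the only genuinely non-abelian step, is not established, and the tools you name cannot establish it.
\begin{itemize}
\item \emph{Denjoy--Koksma.} It only gives $|\sum_{l<q}f(t+l\alpha)-q\int f|\le\operatorname{Var}(f)$. That is an $O(1)$ bound, not $o(1)$, and you need a power saving because you sum over $|j|\le Q^{\delta}$. Worse, $\operatorname{Var}(u_q)$ is not bounded. By \eqref{p2}, $|\widehat u_q(q_k)|\asymp|\widehat\varphi(q_k)|\,q_{k+1}$, so $\|u_q'\|_{L^1}\ge 2\pi q_k|\widehat u_q(q_k)|$ can be $\asymp q_k^{1-B}q_{k+1}$ with $B=2+2\varepsilon$. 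This is unbounded as soon as $q_{k+1}\ge q_k^{2}$ infinitely often and $\widehat\varphi(q_k)\asymp q_k^{-B}$.
\item \emph{The tail.} Pointwise, $\varphi_l-(u_q(\cdot+l\alpha)-u_q)\ll q^{2-B}$ for $l<q$. But you sum $q$ such terms against $\eta(t+l\alpha)$, so the trivial bound is $q^{3-B}=q^{1-2\varepsilon}$. Making it small is itself a bilinear exponential-sum estimate (the paper's $I^{>,>}$ and $I^{>,\le}$).
\item \emph{The term $u(t)\eta_q(t)$.} The coefficient of $u_q$ at $m=q_{n-1}$ shows $\|u_q\|_\infty$ can be as large as $q_nq_{n-1}^{-B}$. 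So ``$u(t)\eta_q(t)$ is small by Step 2'' depends on the gap structure of $(q_k)$, which your sketch never ties to the choice of subsequence.
\end{itemize}

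What is missing is a real estimate of the off-diagonal part $\sum_{u+v\neq0}$ of $H_n$. The paper proves it in $L^2(\mathbb{T},\mathrm{d}t)$ rather than uniformly. For irrational $\alpha$ every $\nu\in M(\mathrm{X},S_\alpha)$ projects to Lebesgue measure on $\mathbb{T}$, and the displacement bound \eqref{d} depends only on $t$. So uniformity in $\nu$ comes for free, and your sup-norm target is stronger than needed.

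Parseval then turns $\int_{\mathbb{T}}|\Sigma_n|^2$ into sums over $u_1+v_1=u_2+v_2$. The paper estimates these by:
\begin{itemize}
\item splitting at $|u|,|v|\lessgtr q_m/4$, using \eqref{w1} or \eqref{w2} according to which frequency is small;
\item separating the case $q_{m+1}\ge q_m^{2+2\theta}$ infinitely often, where crude bounds suffice;
\item from the case $q_{l+1}\ll q_l^{2+2\theta}$ for all $l$, where every $q_l$ is converted into a power of $q_m$, including the near-resonant terms with small $|u+v|$ handled through $J_\pm$ and $K$.
\end{itemize}
Nothing in your proposal shows that the corresponding $\ell^1$-type sums needed for a sup-norm bound are small under mere Fourier-decay $C^{2+2\varepsilon}$ and $C^{1+\varepsilon}$ hypotheses.

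Your worry about the stability of $C_q$ in $q$ is unnecessary. As in the paper, fix $q=q_m$ and use only $n=jaq_m$. The main term is then exactly linear in $ja$, and a single simultaneous Dirichlet approximation (for the drift, $\alpha$ and $\widehat\psi(0)$) covers all $|j|\le q_m^{\delta}$.
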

	
	\begin{theorem}\label{thm}
		Under the same setting and assumptions as in Theorem~\ref{intd},  the M\"obius function $\mu$ is  linearly disjoint from $(\mathbb{T}\times\Gamma\backslash G, S_\alpha)$ for all $\alpha$, i.e. Sarnak's M\"obius disjointness conjecture holds.
	\end{theorem}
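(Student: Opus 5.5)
The statement to prove is Theorem~\ref{thm}. The plan is to split according to whether $\alpha$ is rational or irrational.

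\emph{Irrational $\alpha$.} Theorem~\ref{intd} gives PR rigidity of $(\T\times\Gamma\backslash G,S_\alpha,\nu)$ for every $\nu\in M(\T\times\Gamma\backslash G,S_\alpha)$. The map $S_\alpha$ is a homeomorphism of the compact metric space $\mathrm X$, since its inverse is $(t,\Gamma g)\mapsto(t-\alpha,\Gamma g\,A(t-\alpha)^{-1})$, where $A(t)$ denotes the unipotent matrix in \eqref{S}. Proposition~\ref{prop2.1} (the Kanigowski--Lema\'nczyk--Radziwi\l\l{} criterion) then yields M\"obius disjointness directly. Almost all of the work therefore lies inside Theorem~\ref{intd}, which we take as given here.

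\emph{Rational $\alpha=a/q$ with $(a,q)=1$.} Iterating \eqref{S} gives
\[
S_\alpha^n(t,\Gamma g)=\bigl(t+n\alpha,\ \Gamma g\,A(t)A(t+\alpha)\cdots A(t+(n-1)\alpha)\bigr).
\]
Write $n=mq+r$ with $0\le r<q$. The product of $q$ consecutive factors, $B(t):=A(t)\cdots A(t+(q-1)\alpha)$, is a fixed element of $G$ depending only on $t$. Since $t+q\alpha\equiv t$, the factors $A(t+j\alpha)$ recur with period $q$ in $j$, so every block of $q$ consecutive factors is a cyclic rotation of $B(t)$. Splitting the product into $m$ full blocks and a remainder, we get $S_\alpha^{n}(t,\Gamma g)=\bigl(t+r\alpha,\ \Gamma g\,B(t)^m C_r(t)\bigr)$, where $C_r(t)$ is the product of the first $r$ factors.

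Fix $f\in C(\mathrm X)$ and a point $(t,\Gamma g)$. For each residue class $r$, the function $m\mapsto f(S_\alpha^{mq+r}(t,\Gamma g))$ is $F_r(\Gamma g B(t)^m)$ with $F_r(\Gamma h):=f(t+r\alpha,\Gamma h C_r(t))$ continuous on $\Gamma\backslash G$. It is thus a nilsequence along the one-parameter polynomial orbit $m\mapsto g B(t)^m$ in a $2$-step nilmanifold.

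Green--Tao \cite{GT} gives $\sum_{n\le N,\,n\equiv r\,(q)}\mu(n)F(\Gamma g B^{(n-r)/q})=o(N)$. To reach this form, write the orbit as the polynomial sequence $p(n)=gB^{(n-r)/q}$ in $G$, using real powers of the unipotent element $B$, which are defined via $\log B$. Detect the congruence $n\equiv r\pmod q$ by additive characters $e(hn/q)$, each absorbed as a further $1$-step nilsequence. Green--Tao's theorem (for continuous functions, after uniform approximation by Lipschitz ones) then applies to each of the $q$ residue classes. Summing over $r$ gives $\frac1N\sum_{n\le N}\mu(n)f(S_\alpha^n(t,\Gamma g))\to0$. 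This case needs no zero-mean or symmetry assumption on $\varphi,\eta$, consistent with \cite{MW}, and the smoothness hypotheses enter only through continuity of $f\circ S_\alpha^n$.

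The main obstacle lies entirely in the irrational case through Theorem~\ref{intd}. There one must control the central coordinate of the cocycle, which involves sums like $\sum_{j<k}\varphi(t+j\alpha)\eta(t+k\alpha)$, without the cancellation that $\eta=\varphi$ provided. My first attempt would be to solve cohomological equations $\varphi=u\circ R_\alpha-u$ approximately along denominators $q_n$ of $\alpha$. The mismatched central term can then be rewritten via summation by parts as a coboundary plus a term controlled by $\widehat\varphi,\widehat\eta$ decay, and the $C^{2+2\varepsilon}$ hypothesis is what is needed here. For the present theorem, however, the deduction above suffices.
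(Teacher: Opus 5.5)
Your proof is correct. The irrational case is exactly the paper's argument: Theorem~\ref{intd} combined with Proposition~\ref{prop2.1}. Your check that $S_\alpha$ is a homeomorphism is a detail the paper leaves implicit.

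For rational $\alpha=a/q$ you take a genuinely different route. The paper follows \cite{MW}:
\begin{itemize}
\item it uses \cite[Proposition 1]{HLW} to reduce to the special theta-type test functions in $\mathcal{A}\cup\mathcal{B}$;
\item it writes $\Phi_n,\xi_n,\Omega_n$ explicitly as linear polynomials in $n$ and $H_n$ as a quadratic polynomial in $n$;
\item it converts $f(S_\alpha^n(t_0,\overline{g_0}))$, via Gaussian truncation and the mean value theorem, into sums of $e(p(n))$ with bracket polynomials $p$;
\item it then invokes the bracket-polynomial estimate \cite[p.559]{GT}.
\end{itemize}

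You instead argue structurally. Since $t+q\alpha\equiv t$, the cocycle along $n=mq+r$ equals $B(t)^mC_r(t)$. On each residue class the orbit is therefore a linear nilrotation on $\Gamma\backslash G$, evaluated against a fixed continuous function. You extend $m\mapsto B^m$ to the one-parameter subgroup $s\mapsto B^s$ and detect the progression with the characters $e(hn/q)$. This places you directly in the M\"obius--nilsequence theorem of \cite{GT}, after the harmless passage from $\Gamma\backslash G$ to $G/\Gamma$ via $h\mapsto h^{-1}$ and a Lipschitz approximation of $F_r$.

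Your route is shorter and more conceptual. It needs no dense family of special test functions and no explicit formula for $H_n$. It also shows that in the rational case only continuity of $\varphi,\eta,\psi$ is used, with no smoothness and no zero mean. It makes explicit the splitting into residue classes mod $q$, which the paper's formulas tacitly require, since their coefficients depend on $b\equiv n \pmod q$. The paper's route instead stays inside the explicit Heisenberg computations already set up in \cite{HLW,MW}.

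Your closing speculation about how Theorem~\ref{intd} might be proved is not needed here. It also does not match the paper's method. The paper bounds $\int_{\T}\|H_n(t)\|^2\,\textup{d}t$ by Fourier analysis along $n=s_mq_m$ and kills the drift term $n\lambda(q_m)$ by a Dirichlet choice of $v_m$. Cohomological equations appear only in the proof of Theorem~\ref{prop}.
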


	\begin{theorem}\label{prop}
		With the same setting as in Theorem~\ref{intd}, but now assuming that  $\varphi, \eta, \psi$ are $C^\infty$-smooth, the measure complexity of $(\mathbb{T}\times\Gamma\backslash G, S_\alpha, \rho)$ is sub-polynomial for any $\rho\in M(\mathbb{T}\times\Gamma\backslash G, S_\alpha)$ and irrational $\alpha$.
	\end{theorem}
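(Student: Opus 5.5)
I will follow the strategy of Huang--Wang--Ye and Huang--Liu--Wang: bound the measure complexity by how many $\epsilon$-balls in the $\bar d_n$-metric are needed to cover most of the space. The natural metric is the one Huang--Liu--Wang use, $d((t,\Gamma g),(t',\Gamma g'))=\|t-t'\|+d_{\Gamma\backslash G}(\Gamma g,\Gamma g')$.

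For the measure complexity to be sub-polynomial, it suffices to show the following. For every $\epsilon>0$ and every $\tau>0$ there is a sequence $n_k\to\infty$ such that a set of $\rho$-measure at least $1-\epsilon$ is covered by $O_{\epsilon,\tau}(n_k^{\tau})$ balls $\{y:\bar d_{n_k}(x,y)<\epsilon\}$, where $\bar d_n(x,y)=\frac1n\sum_{j<n}d(S_\alpha^jx,S_\alpha^jy)$.

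First, I compute the cocycle. Writing $\varphi_m(t)=\sum_{j<m}\varphi(t+j\alpha)$, and similarly $\eta_m,\psi_m$, the iterate $S_\alpha^m(t,\Gamma g)$ is $\Gamma g$ times the upper unipotent matrix with entries $\varphi_m(t)$, $\eta_m(t)$ and
\[
\Psi_m(t)=\psi_m(t)+\sum_{0\le i<j<m}\varphi(t+i\alpha)\eta(t+j\alpha).
\]
Condition (ii) was used precisely to rewrite this double sum as $\tfrac12\varphi_m^2$ plus a single sum. Without (ii), I instead use Fourier expansion. Since $\varphi,\eta$ have zero mean, the double sum is $\sum_{a,b\ne0}\hat\varphi(a)\hat\eta(b)e((a+b)t)$ times a geometric-type kernel in $\alpha$. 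The diagonal terms $a+b=0$ give a constant times $m$, plus a function of $t$ independent of the order structure. The off-diagonal terms are controlled by $\|a\alpha\|^{-1}\|b\alpha\|^{-1}$ and $\|(a+b)\alpha\|^{-1}$.

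Second, I pick $n_k=q_k$ (denominators of the convergents of $\alpha$), or suitable multiples $q_k^{\delta}$-scaled ranges. Using $C^\infty$ decay of the Fourier coefficients together with the Denjoy--Koksma-type bound $\|q_k a\alpha\|\le |a|/q_{k+1}$, I show that for $m\le n_k$ the maps $t\mapsto\varphi_m(t),\eta_m(t),\Psi_m(t)-c\,m$ are Lipschitz in $t$ with constant $O_\tau(n_k^{\tau})$, after truncating the Fourier series at $|a|,|b|\le n_k^{\tau/10}$. Here $c$ is the diagonal mean. Where the small divisors $\|a\alpha\|$ are too small, I isolate the exceptional frequencies and handle them by restricting to a subsequence of $k$ along which only boundedly many small divisors occur; the $C^\infty$ hypothesis makes the tail negligible.

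Third, I construct the covering. Partition $\T$ into $n_k^{O(\tau)}$ intervals, so that the fibre maps vary by less than $\epsilon$ along $m<n_k$. Cover a compact fundamental domain of $\Gamma\backslash G$ by $O_\epsilon(1)$ small boxes. Because the fibre action is right multiplication by a matrix depending only on $t$, two points in the same box and interval stay close with respect to the right-invariant distortion. There is a subtlety here: right multiplication does not preserve distances in $G$. I handle it by noting that the conjugation distortion is controlled by the sizes of $\varphi_m,\eta_m$, which are bounded by the Lipschitz estimates of the second step, and by averaging in $\bar d_n$ to discard a small proportion of times. This yields $n_k^{O(\tau)}$ balls.

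The main obstacle is the second step: controlling the non-symmetric double sum $\sum_{i<j}\varphi(t+i\alpha)\eta(t+j\alpha)$ uniformly for all $m\le n_k$, for arbitrary irrational $\alpha$, including Liouville $\alpha$. I would first try the approach of Ma--Wu for $\eta=k\varphi$, replacing the identity $\varphi_m^2$ by the bilinear Fourier bound above and the resonance set $a+b$ small.
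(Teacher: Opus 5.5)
Your second step fails as stated, and the problem is the choice of time scale. With $n_k=q_k$ it is false that $\varphi_m,\eta_m$ ($m\le n_k$) have Lipschitz constants $O_\tau(n_k^{\tau})$ when $\alpha$ is Liouville.

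The $q_{k-1}$-th Fourier coefficient of $\varphi_m$ (the paper's $\Phi_m$) is $\widehat\varphi(q_{k-1})\,\frac{e(mq_{k-1}\alpha)-1}{e(q_{k-1}\alpha)-1}$. By \eqref{p2}, $\frac1{2q_k}<\|q_{k-1}\alpha\|<\frac1{q_k}$, so its modulus is $\gg|\widehat\varphi(q_{k-1})|\,q_k$ for every $m\in[q_k/4,q_k/2]$. Now take $\alpha$ with $q_k\approx e^{q_{k-1}}$ for all $k$, and $\widehat\varphi(a)=e^{-\sqrt{|a|}}$ for $a\neq0$, which is a zero-mean $C^\infty$ function. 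Then $\sup_t|\varphi_m(t)|\ge q_k^{1-o(1)}$. Since $\varphi_m$ has mean zero, its Lipschitz constant is also $\ge q_k^{1-o(1)}$, and this holds for about a quarter of all $m<n_k$, at every $k$.

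This frequency lies well inside your truncation range $|a|\le n_k^{\tau/10}$. It is also a single small divisor, so restricting to $k$ with ``boundedly many small divisors'' excludes nothing. Through the distortion terms $(1+|y|+|b|)|x^*-x|$ and $(1+|b|)|a-a^*|$ of \eqref{eqdGammaG}, with $b=\varphi_m(t)$ and $a=\eta_m(t)$, your covering then needs fibre boxes of side $q_k^{-1+o(1)}$. That means polynomially many balls in $n_k=q_k$. For the same reason the fibre cannot be covered by $O_\epsilon(1)$ boxes as in your third step.

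The missing idea is that $n$ must be a large power of $q_k$. For $0<|a|<q_k$, the best-approximation property gives $\|a\alpha\|\ge\|q_{k-1}\alpha\|>1/(2q_k)$. So if $n=q_k^{C}$, every small divisor below $q_k$ costs at most $q_k=n^{1/C}$, uniformly in $m\le n$. Frequencies $|a|\ge q_k$ are killed by $C^\infty$ decay against the trivial bounds $m$ (single Birkhoff sums) and $m^2$ (the double-sum kernel).

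The paper implements this through a dichotomy, with $B=6\tau^{-1}+1$.
\begin{enumerate}
\item[(a)] If $q_{k+1}>q_k^{B}$ for infinitely many $k$, it takes $n_k=q_k^{B-1}$ and conjugates away the non-resonant parts via $R$. It then gets $|\Phi_m|+|\xi_m|\ll q_k$, and from Lemma~\ref{lem1} that $|H_m(t)-H_m(t^*)|$ and $(1+|\Phi_m(t)|)|\xi_m(t)-\xi_m(t^*)|$ are $\ll q_k^{-1}+q_k^{2}|t-t^*|$. So the grid $F_\varepsilon(k)$, with $\ll q_k^5=n_k^{5/(B-1)}$ points, suffices.
\item[(b)] If $q_{k+1}\le q_k^B$ eventually, then $\varphi$, $\eta$ and the induced central cocycle are smooth coboundaries (up to a constant). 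So $S_\alpha$ is conjugate to the isometry $(t,\bar g)\mapsto(t+\alpha,\bar g z_0)$ with $z_0$ central, and $s_n=s_1$ stays bounded.
\end{enumerate}

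Within your direct Fourier framework, the dichotomy can in fact be avoided. For every $k$, take $n_k=q_k^{C}$ with $C>5/\tau$. The bounds just described, using \eqref{w1} when $|a|,|b|<q_k/2$ and $a+b\neq0$, give the following, uniformly in $m\le n_k$:
\begin{enumerate}
\item[(i)] $|\varphi_m|,|\eta_m|\ll q_k$;
\item[(ii)] $t$-Lipschitz constants $\ll q_k$ for $\varphi_m,\eta_m,\psi_m$;
\item[(iii)] a $t$-Lipschitz constant $\ll q_k^2$ for the double sum.
\end{enumerate}
This is exactly what $F_\varepsilon(k)$ needs. As written, however, the choice $n_k=q_k$ and the ``isolate exceptional frequencies'' step leave the key estimate unproved.
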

	
	\begin{remark}
		The Sarnak conjecture for $(\mathbb{T}\times\Gamma\backslash G, S_\alpha)$ for irrational $\alpha$ follows from both Theorems~\ref{intd} and \ref{prop} but under different conditions of  smoothness. The required smoothness may have reached the argument's limit and the zero mean condition (iii) seems unavoidable in our applied methods.  We also prove Theorem~\ref{prop}, although it implies the Sarnak conjecture under the stronger smoothness condition, because at present there is no evidence that the sub-polynomial measure complexity follows from the PR rigidity. 
	\end{remark}
	
	Our proofs build on the work of Huang-Liu-Wang \cite{HWY} and de Faveri \cite{Faveri} (and the adaptation of He and Wang \cite{HW}). Many of the tools developed or applied in these papers are used here.  The key ingredient that plays the novelty lies in an effective use of the centre of the Heisenberg group and the delicate treatment of double exponential sums arising from the skew product. We end this section with a briefing on notation and organization.

	\noindent\textbf{Notation.}
	Here we present some commonly used notation.
	We write $e(x):=e^{2\pi ix}$ and  
	\(
	\|  x\| :=\min_{n\in\mathbb{Z}}|x-n|.
	\)
	For positive $A, $ the notations $B =O(A)$ or $B\ll A$ mean that there exists a positive
	constant $c$ such that $|B|\leq cA$.
	If the constant $c$ depends on a parameter $b$,
	we write $B =O_b (A)$ or $B\ll_b A$.
	The notation $A\asymp B$ means that $A\ll B$ and $B\ll A$.
	For a topological space $X, $ we use $C(X)$ to denote the set of all continuous complex-valued functions on $X$. We write \S\S~2.1 for Subsection 2.1, etc.
	
	\noindent\textbf{Organization.} Nine sections follow. Section 2 summarizes the basic ideas of polynomial rate of rigidity (PR rigidity) and measure complexity. Section 3 provides a preparation of some tools, including metrics on the $3$-dimensional Heisenberg nilmanifold and important results related to continued fractions. Sections 4-6 complete the proof of Theorem~\ref{intd}. Theorem~\ref{thm} is deduced from Theorem~\ref{intd} and Green-Tao's result for irrational and rational $\alpha$. This is done in Section 7. Sections 8-10 are devoted to the proof of Theorem~\ref{prop}.

	\section{Rigidity and  Measure complexity}\label{RM}
	
	In this section, we collect some concepts and facts from \cite{KLR} and \cite{HWY} without proof.
	
	Let $X$ be a compact space, $T:X\to X$ a homeomorphism and let $M(X, T)$ be the set of all $T$-invariant Borel probability measures on $X$.
	
	\subsection{PR rigidity} For any $\nu\in M(X,T)$, the system $(X,T,\nu)$ is said to be rigid if for any $g\in L^2(X,\nu)$, there exists a strictly increasing sequence $\{r_n\}_{n\ge 1}\subset \N$ such that 
	$$
	\|g\circ T^{r_n}-g\|_{L^2(\nu)}^2 \to 0 \quad \mbox{as $n\to \infty$}. 
	$$
	Moreover, $(X,T,\nu)$ is said to have polynomial rate of rigidity (PR rigidty) if there exists a linearly dense\footnote{A subset $\mathcal{F}$ of a space is linearly dense if the linear combinations of elements in $\mathcal{F}$ form  a dense subset.} subset $\mathcal{F}\subset C(X)$ such that for every $f\in \mathcal{F}$, there are $\delta>0$ and a strictly increasing sequence $\{r_n\}_{n\ge 1}\subset \N$ such that
	\begin{eqnarray}\label{PRr}
		\sum_{|j|\le r_n^\delta} \|f\circ T^{jr_n}-f\|_{L^2(\nu)}^2 \to 0 \quad \mbox{as $n\to \infty$}.     
	\end{eqnarray}

	The proposition below follows from Theorem~1.1 of \cite{KLR}.
	\begin{proposition}\label{prop2.1}
		If $(X,T, \nu)$ has PR rigidity for all $\nu\in M(X,T)$, then the M\"obius disjointness for $(X,T)$ holds.
	\end{proposition}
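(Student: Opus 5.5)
The plan is to deduce the statement from Theorem~1.1 of \cite{KLR}. The only real work is to check that our definition \eqref{PRr} of PR rigidity matches the hypothesis there, and to recall how the argument runs so that the reduction is transparent.

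\textbf{Reductions.} Fix $x\in X$. Because $\mathcal{F}$ is linearly dense in $C(X)$ and the averages $\frac1N\sum_{n\le N}\mu(n)f(T^nx)$ are bounded by $\|f\|_\infty$ and linear in $f$, it suffices to prove disjointness for each $f\in\mathcal{F}$. Suppose, for contradiction, that for some $f\in\mathcal F$ there are $c>0$ and $N_k\to\infty$ with
$$\Bigl|\frac1{N_k}\sum_{n\le N_k}\mu(n)f(T^nx)\Bigr|\ge c .$$
Passing to a subsequence, the empirical measures $\frac1{N_k}\sum_{n\le N_k}\delta_{T^nx}$ converge weak$^*$ to some $\nu\in M(X,T)$. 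By hypothesis $(X,T,\nu)$ has PR rigidity, so for this particular $f$ we get $\delta>0$ and $r_m\to\infty$ satisfying \eqref{PRr}. We may need to shrink $\mathcal F$ to the functions witnessing rigidity for $\nu$; this is harmless, since the contradiction argument is carried out for one $\nu$ at a time, which is exactly how \cite{KLR} is set up.

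\textbf{Transfer along arithmetic progressions.} Fix $m$ large, write $r=r_m$ and $H=\lfloor r^\delta\rfloor$. Weak$^*$ convergence applied to the continuous functions $|f\circ T^{jr}-f|^2$ gives
$$\limsup_k \frac1{N_k}\sum_{n\le N_k}\frac1H\sum_{|j|\le H}\bigl|f(T^{n+jr}x)-f(T^nx)\bigr|^2\le \frac1H\,o(1).$$
Now rewrite $\sum_{n\le N}\mu(n)f(T^nx)$ as $\frac1H\sum_{n\le N}\sum_{j\le H}\mu(n+jr)f(T^{n+jr}x)$, up to an error $O(rH/N)$. Replace $f(T^{n+jr}x)$ by $f(T^nx)$ and control the replacement by Cauchy--Schwarz and the display above. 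What remains is bounded by
$$\|f\|_\infty\,\frac1N\sum_{n\le N}\Bigl|\frac1H\sum_{j\le H}\mu(n+jr)\Bigr|.$$

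\textbf{Main obstacle.} The crucial input is the number-theoretic bound that this last average is small, uniformly for $N$ large, when $H=r^\delta$ with $r\to\infty$. This is a Matomäki--Radziwiłł--Tao type estimate for averages of $\mu$ along short arithmetic progressions $n+jr$ whose length is polynomial in the modulus, and it is proved in \cite{KLR} (the polynomial length $H=r^\delta$ is exactly why the rigidity must hold at a polynomial rate). Granting it, the sum is $o(N)$ as $m\to\infty$, which contradicts the lower bound $c$. I would not reprove that estimate but cite \cite[Theorem~1.1]{KLR} directly, after checking that its hypothesis is exactly \eqref{PRr} for a linearly dense family and for every invariant measure.
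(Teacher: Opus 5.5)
Your proposal is correct and takes the same route as the paper, which obtains the statement simply by invoking Theorem~1.1 of \cite{KLR}; your extra sketch (a quasi-generic limit measure $\nu$, transfer along the progressions $n+jr_m$, and the estimate for $\mu$ on short progressions of length $r_m^{\delta}$) is a reasonable outline of that argument and is not needed beyond the citation. The one point to tidy is the order of quantifiers in your reduction. Since the dense family $\mathcal{F}$ depends on $\nu$, you should start from an arbitrary $f\in C(X)$ and extract $\nu$ from the bad subsequence $N_k$; only then approximate $f$ uniformly by linear combinations of functions rigid for $\nu$, and run the argument for each of them along the same $N_k$.
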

	
	\subsection{Measure complexity} Suppose the topology of $X$ is induced by a metric $d$. For $n\in\mathbb{N}$, we define
	\begin{equation}\label{bard}
		\bar{d}_n(x, y)=\frac 1n\sum_{j=0}^{n-1}d(T^{j}x, T^{j}y)
	\end{equation}
	for $x, y\in X$. For $\nu \in M(X,T)$, the measure complexity of $(X,T)$ with respect to $\nu$ (or  $(X,T,\nu)$) is concerned with the number of balls of radius $\varepsilon$ under the metric $\bar{d}_n$ that can cover the whole space $X$ except for a subset whose measure under $\nu$ is less than $\varepsilon$, for arbitrary $\varepsilon>0$. Below is the definition we need. 
	
	Let $\nu \in M(X,T)$ and $\varepsilon>0$.  Define
	\begin{align}\label{mc}
		&  s_n(X, T, d, \nu, \varepsilon) \nonumber\\
		&:= \min\bigg\{m\in\mathbb{N}: \nu\Big(\bigcup_{j=1}^{m}B_{\bar{d}_n}(x_j, \varepsilon)\Big)>1-\varepsilon \ \mbox{ for some $ x_1,\cdots, x_m\in X$}\bigg\},    
	\end{align}
	where $B_{\bar{d}_n}(x, \varepsilon)=\{y\in X:\bar{d}_n(x, y)<\varepsilon\}$. Note that $\nu(X)=1$. 
	We say that the measure complexity of $(X, T, \nu)$ is sub-polynomial if for any $\tau>0$, 
	\begin{equation}\label{sp}
		\liminf_{n\rightarrow\infty}\frac{s_n(X, T, d, \nu, \varepsilon)}{n^\tau}=0 \quad \mbox{ holds for any $\varepsilon>0$.}
	\end{equation}
	The following quoted criterion (although we do not use it here) provides a motivation for studying measure complexity.  Proposition~\ref{prop2.3} for isomorphic flows will be applied later.
	\begin{proposition}(\cite[Theorem 1.1]{HWY})
		If the measure complexity of $(X, T, \rho)$ is sub-polynomial for any $\rho\in M(X, T)$,
		then the M{\"o}bius disjointness conjecture holds for $(X, T)$.
	\end{proposition}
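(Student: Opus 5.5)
We would argue by contradiction through a generic-point construction, reducing the statement to a quantitative orthogonality estimate for $\mu$ on short intervals. Suppose there are $f\in C(X)$, $x\in X$, $\delta>0$ and $N_k\to\infty$ with $\big|\frac1{N_k}\sum_{n\le N_k}\mu(n)f(T^nx)\big|\ge\delta$. Passing to a further subsequence, the empirical measures $\frac1{N_k}\sum_{n<N_k}\delta_{T^nx}$ converge weak-$*$ to some $\rho\in M(X,T)$. The hypothesis then gives, for every $\tau>0$ and $\varepsilon>0$, infinitely many lengths $L$ and points $x_1,\dots,x_m$ with $m\le L^{\tau}$ such that the $\bar d_L$-balls $B_{\bar d_L}(x_j,\varepsilon)$ have total $\rho$-measure $>1-\varepsilon$.

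Next we transfer this covering from $\rho$ to the orbit of $x$. The balls are open in the product topology determined by the finitely many maps $T^0,\dots,T^{L-1}$. Slightly shrinking them to closed sets, or using the portmanteau theorem with the open version, we get: for all large $k$, all but an $O(\varepsilon)$-proportion of the starting points $n\le N_k$ satisfy $\bar d_L(T^nx,x_j)<\varepsilon$ for some $j$. By uniform continuity of $f$ (fixing a modulus $\omega$), for such $n$ the window $\big(f(T^{n+h}x)\big)_{0\le h<L}$ is within $\omega(\varepsilon)+o(1)$ in averaged $\ell^1$ of one of the $m\le L^\tau$ fixed patterns $c_j(h)=f(T^hx_j)$.

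Averaging the Möbius sum over windows, we write
\[
\frac1{N}\sum_{n\le N}\mu(n)f(T^nx)=\frac1N\sum_{n\le N}\frac1L\sum_{h<L}\mu(n+h)f(T^{n+h}x)+O(L/N),
\]
and then replace each window by its nearest pattern. This bounds the left side by
\[
O(\varepsilon+\omega(\varepsilon))+\frac1N\sum_{n\le N}\max_{j\le L^\tau}\Big|\frac1L\sum_{h<L}\mu(n+h)c_j(h)\Big|.
\]
So it suffices to show the last term is $o(1)$ as $L\to\infty$, uniformly in $N\ge N_0(L)$.

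This is the main obstacle: a single pattern is handled by Matomäki--Radziwi\l\l--Tao type averaged short-interval estimates, but here we must take a maximum over $L^\tau$ patterns. We would bound the maximum by the sum over $j$ and use a quantitative bound with a power saving in $L$, e.g.
\[
\frac1N\sum_{n\le N}\Big|\frac1L\sum_{h<L}\mu(n+h)c(h)\Big|^{2}\ll L^{-\kappa}
\]
uniformly over bounded $c$, for some fixed $\kappa>0$. This is obtained by expanding the square into correlations $\frac1N\sum_n\mu(n+h_1)\mu(n+h_2)$ and invoking the logarithmically averaged two-point Chowla bounds in their quantitative form, or Matomäki--Radziwi\l\l's bound for the relevant shifts. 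Cauchy--Schwarz then bounds the maximal term by $L^{\tau/2}L^{-\kappa/2}$. Choosing $\tau<\kappa$, then $\varepsilon$ small and $L$ large along the complexity sequence, contradicts $\delta>0$.

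If only logarithmic averages are available, we would run the same argument with logarithmic weights and then recover Cesàro averages by the standard passage used for such criteria; this is where care is needed.
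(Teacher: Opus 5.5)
The paper gives no proof of this proposition; it quotes it from \cite[Theorem 1.1]{HWY} and notes that it is not used. Your argument therefore has to stand on its own, and it has a genuine gap.

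The soft reduction is fine. You take a quasi-generic $\rho$, transfer the cover using $\liminf_k \nu_k(U)\ge \rho(U)$ for the open set $U=\bigcup_j B_{\bar{d}_L}(x_j,\varepsilon)$, average over windows, and replace windows by $m\le L^{\tau}$ patterns. One small correction: the error term should be something like $\omega(\sqrt{\varepsilon})+2\|f\|_\infty\sqrt{\varepsilon}$ rather than $\omega(\varepsilon)$, since $\bar{d}_L<\varepsilon$ only controls the average distance.

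The gap is the number-theoretic input
\[
\frac1N\sum_{n\le N}\Big|\frac1L\sum_{h<L}\mu(n+h)c(h)\Big|^{2}\ll L^{-\kappa}\quad\text{uniformly in } |c|\le 1 .
\]
No such bound is known.
\begin{itemize}
\item Already for $c\equiv 1$ it is a power saving for the mean square of $\mu$ over intervals of fixed length $L$ as $N\to\infty$. Matom\"aki--Radziwi\l\l{} give only a saving of order $\frac{\log\log L}{\log L}$.
\item The averaged two-point bound of Matom\"aki--Radziwi\l\l--Tao, $\sum_{h\le L}\big|\sum_{n\le N}\mu(n)\mu(n+h)\big|\ll\big(\frac{\log\log L}{\log L}+o(1)\big)LN$, is likewise only logarithmic.
\item Tao's two-point result is for logarithmic averages and has no power-saving rate. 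Individual Ces\`aro correlations $\frac1N\sum_{n\le N}\mu(n)\mu(n+h)$ are not even known to be $o(1)$.
\end{itemize}
With the saving that is actually available, your ``max $\le$ sum plus Cauchy--Schwarz'' step gives $\big(m\frac{\log\log L}{\log L}\big)^{1/2}$. That controls only $m=o(\log L/\log\log L)$ patterns, nowhere near $L^{\tau}$, so there is no $\kappa$ to play against $\tau$. The fallback in your last paragraph also fails: Ces\`aro convergence implies logarithmic convergence, not conversely, and there is no standard way back.

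The missing idea is that the only power saving in $L$ currently available is in a \emph{tail} bound, not a mean. Matom\"aki--Radziwi\l\l{} show that
\[
\Big|\frac1h\sum_{x<n\le x+h}\mu(n)\Big|\le\delta+O\Big(\frac{\log\log h}{\log h}\Big)
\]
outside an exceptional set of density $\ll h^{-c\delta^{6}}$. There are analogues in short arithmetic progressions via Dirichlet characters.

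Because the exponent depends on $\delta$, a union bound over $L^{\tau}$ objects survives only when $\tau<c\delta^{6}$. This is the natural reason the hypothesis is needed for every $\tau>0$. But the estimate concerns plain or progression-restricted short sums of $\mu$, not sums weighted by arbitrary bounded vectors $c_j$. Controlling $\max_{j\le L^{\tau}}$ over arbitrary weights is not available from known results.

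A correct argument therefore has to use the dynamical origin of the patterns $c_j(h)=f(T^hx_j)$, not just their number, to reduce to sums where such tail bounds apply. It must then estimate through tails rather than mean squares. Treating the $c_j$ as arbitrary bounded vectors, as you do, throws away exactly the information needed.
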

	Flow $(Y,{\rm T},\nu)$ is said to be measurably isomorphic to $(X,T,\rho)$ if there are Borel measurable subsets $X'\subset X$ and $Y'\subset Y$ satisfying $\rho(X')=\nu(Y')=1$, $T X' \subset X'$, ${\rm T}(Y') \subset Y'$
	such that $\phi\circ T = {\rm T} \circ \phi$ for some invertible measure-preserving map $\phi: X'\to Y'$.
	\begin{proposition}(\cite[Proposition 2.2]{HWY})\label{prop2.3}
		Suppose $(Y,{\rm T},\nu)$ is measurably isomorphic to $(X,T,\rho)$. If the measure complexity of $(Y,{\rm T}, \nu)$ is sub-polynomial, then so is $(X,T,\rho)$ and vice versa. 
	\end{proposition}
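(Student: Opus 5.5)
The plan is to reduce the metric-dependent quantity $s_n(X,T,d,\rho,\varepsilon)$ to a metric-free quantity defined through finite measurable partitions. That quantity is transported verbatim by a measurable isomorphism. For a finite measurable partition $\mathcal P=\{P_1,\dots,P_k\}$ of $X$, define the Hamming-type pseudometric
$$\bar d_n^{\mathcal P}(x,y)=\frac1n\#\{0\le j<n:\ T^jx,\ T^jy \text{ lie in different atoms of } \mathcal P\},$$
and let $s_n^{\mathcal P}(\rho,\varepsilon)$ be the analogue of \eqref{mc} with $\bar d_n$ replaced by $\bar d_n^{\mathcal P}$. The intermediate claim is the following.

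\begin{claim*}
$(X,T,\rho)$ has sub-polynomial measure complexity with respect to $d$ if and only if, for every finite measurable partition $\mathcal P$, every $\tau>0$ and every $\varepsilon>0$, we have $\liminf_{n\to\infty} s_n^{\mathcal P}(\rho,\varepsilon)/n^\tau=0$.
\end{claim*}

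Granting the claim, the proposition is immediate. Let $\phi:X'\to Y'$ be the isomorphism. Since $TX'\subset X'$ and ${\rm T}Y'\subset Y'$, and $\phi\circ T={\rm T}\circ\phi$ on $X'$, the whole forward orbit of any $x\in X'$ stays in $X'$. Hence the $\mathcal Q$-name of ${\rm T}^j\phi(x)$ equals the $\phi^{-1}\mathcal Q$-name of $T^jx$ for every partition $\mathcal Q$ of $Y$. Consequently $\bar d_n^{\mathcal Q}(\phi x,\phi y)=\bar d_n^{\phi^{-1}\mathcal Q}(x,y)$ on $X'$. Since $\phi$ is measure preserving, a covering by balls in one space yields a covering in the other with the same count and the same measure. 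One subtlety is that centres must be taken in $X'$ (resp.\ $Y'$); this costs at most a doubling of the radius, by replacing each centre with a point of the ball lying in $X'$. Thus the partition condition passes from $Y$ to $X$ and back.

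For the direction \emph{partition $\Rightarrow$ metric}, fix $\varepsilon$ and take a finite partition $\mathcal P$ into sets of diameter $<\varepsilon/2$. If $\bar d_n^{\mathcal P}(x,y)<\varepsilon'$, then
$$\bar d_n(x,y)\le \frac{\varepsilon}{2}+\varepsilon'\,\mathrm{diam}(X),$$
which is below $\varepsilon$ for $\varepsilon'$ small. Hence $s_n(\varepsilon)\le s_n^{\mathcal P}(\min(\varepsilon,\varepsilon'))$, and sub-polynomiality transfers.

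For the direction \emph{metric $\Rightarrow$ partition}, fix $\mathcal P$ and $\varepsilon$. By regularity, choose compact sets $K_i\subset P_i$ with $\rho(X\setminus K)<\eta$, where $K=\bigcup K_i$. Let $\delta>0$ be smaller than the mutual distances of the $K_i$. Consider the averages $A_n(x)=\frac1n\sum_{j<n}\mathbb 1_{X\setminus K}(T^jx)$. By $T$-invariance we have $\int A_n\,d\rho<\eta$, so Markov's inequality gives $\rho(A_n>\sqrt\eta)<\sqrt\eta$. Now suppose $x,y$ are good, meaning $A_n\le\sqrt\eta$ for both, and $\bar d_n(x,y)<\delta\sqrt\eta$. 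Then the fraction of $j$ with $d(T^jx,T^jy)\ge\delta$ is at most $\sqrt\eta$. Outside at most $3\sqrt\eta\, n$ indices, both points lie in $K$ at distance $<\delta$, hence in the same $K_i$. So $\bar d_n^{\mathcal P}(x,y)\le3\sqrt\eta$.

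To make this usable, take a near-optimal $\bar d_n$-cover of radius $\delta\sqrt\eta/2$. Discard balls that contain no good point, and recentre each remaining ball at a good point it contains (radius now at most $\delta\sqrt\eta$). The union then still covers all good points outside the exceptional set, of measure $\ge1-\delta\sqrt\eta/2-\sqrt\eta$. Choosing $\eta$ with $3\sqrt\eta<\varepsilon$ and $\sqrt\eta+\delta\sqrt\eta/2<\varepsilon$ gives
$$s_n^{\mathcal P}(\varepsilon)\le s_n\big(\min(\varepsilon,\delta\sqrt\eta/2)\big).$$
This proves the claim.

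I expect this last step to be the main obstacle. The Markov/invariance estimate must hold uniformly in $n$, which is true since $\int A_n\,d\rho$ does not depend on $n$. The recentring of balls must also be organised so that the quantifiers over $n$ in the $\liminf$ in \eqref{sp} are preserved. This works because all inequalities above hold for each $n$ separately, with constants independent of $n$.
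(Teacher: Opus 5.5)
The paper does not prove this proposition: it is quoted from \cite[Proposition~2.2]{HWY} in a section that explicitly collects facts ``without proof''. There is therefore no in-paper argument to compare yours with.

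Your argument is correct and self-contained. Passing to the Hamming-type count $s_n^{\mathcal P}$ is the right device. It depends only on $\mathcal P$-names along forward orbits, so $\phi$ carries it over verbatim on the forward-invariant full-measure sets $X'$ and $Y'$. Your two comparison inequalities,
$s_n(\varepsilon)\le s_n^{\mathcal P}(\min(\varepsilon,\varepsilon'))$ and $s_n^{\mathcal P}(\varepsilon)\le s_n(\min(\varepsilon,\delta\sqrt\eta/2))$,
hold for every $n$ with parameters independent of $n$. Hence the $\liminf$ in \eqref{sp} transfers for each fixed $\tau$ and $\varepsilon$. As a by-product, you also get that the growth class of $s_n$ does not depend on the choice of compatible metric. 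The invariance also holds for any growth scale $u(n)$ in place of $n^\tau$.

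Only bookkeeping remains.
\begin{enumerate}
\item Since $\delta$ is produced from $\eta$ through the compact sets $K_i$, fix $\eta$ with $3\sqrt\eta<\varepsilon$ first and then take $\delta\le 1$. This makes your second condition $\sqrt\eta+\delta\sqrt\eta/2<\varepsilon$ automatic.
\item In the transfer step, a ball that does not meet $Y'$ is $\nu$-null. It should be discarded rather than recentred.
\item A partition $\mathcal P$ of $X$ is moved to $Y$ by setting $Q_i=\phi(P_i\cap X')$ and adjoining $Y\setminus Y'$ to one atom. This uses the measurability of $\phi^{-1}$, which is built into ``invertible measure-preserving''.
\end{enumerate}
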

	
	\section{Some preparation}
	\subsection{$3$-dimensional Heisenberg nilmanifold}\label{ss3.1} The Heisenberg group $G$ is defined as in \eqref{GGamma}, consisting of all $3\times 3$ upper triangular real matrices with all diagonal elements equal to $1$. It is well-known that its commutator subgroup $G_1=[G,G]$ is equal to the center $Z(G)$ of $G$, and 
	\begin{eqnarray*}
		Z(G)=\begin{pmatrix}\begin{smallmatrix}
				1 & & \R\\ & 1 & \\ & & 1
			\end{smallmatrix}
		\end{pmatrix}.
	\end{eqnarray*} 
	Therefore,  any $g= \begin{pmatrix}\begin{smallmatrix} 1 & y & z \\ & 1 & x\\ & & 1\end{smallmatrix}\end{pmatrix}\in G$ may be decomposed into
	\begin{eqnarray*}
		g=\begin{pmatrix}\begin{smallmatrix} 1 & y &  \\ & 1 & x\\ & & 1\end{smallmatrix}\end{pmatrix}
		\begin{pmatrix}\begin{smallmatrix} 1 &  & z \\ & 1 & \\ & & 1\end{smallmatrix}\end{pmatrix},     
	\end{eqnarray*}
	and its inverse $g^{-1}$ can be expressed as
	\begin{eqnarray*}
		g^{-1} =\begin{pmatrix}\begin{smallmatrix} 1 &  & xy-z \\ & 1 & \\ & & 1\end{smallmatrix}\end{pmatrix}\begin{pmatrix}\begin{smallmatrix} 1 & -y &  \\ & 1 & -x\\ & & 1\end{smallmatrix}\end{pmatrix}
		=
		\begin{pmatrix}\begin{smallmatrix} 1 & -y &  \\ & 1 & -x\\ & & 1\end{smallmatrix}\end{pmatrix}\begin{pmatrix}\begin{smallmatrix} 1 &  & xy-z \\ & 1 & \\ & & 1\end{smallmatrix}\end{pmatrix}.  
	\end{eqnarray*}

	As in \cite{HLW}, We endow $G$ with a  metric $d_G$ which is left-invariant and satisfies 
	\begin{eqnarray*}
		d_G(g,g') \le \|\kappa(g^{-1}g')\|_\infty
		\qquad \mbox{ for any $g,g'\in G$,}  
	\end{eqnarray*}
	where $\kappa \begin{pmatrix}\begin{smallmatrix} 1 & y & z \\ & 1 & x\\ & & 1\end{smallmatrix}\end{pmatrix} = (x,y,z-xy)$ and $\|(x_1,x_2,x_3)\|_\infty = \max_{1\le i\le 3} |x_i|$ is the $\ell^\infty$-norm.

	Let $\Gamma \subset G$ be the lattice defined in \eqref{GGamma}. Then $\Gamma \backslash G$ is the $3$-dimensional Heisenberg nilmanifold. Sometimes we write $\bar{g}$ for $\Gamma g\in \Gamma \backslash G$. The metric $d_G$ descends to a metric $d_{\Gamma\backslash G}$ on $\Gamma \backslash G$, given by $d_{\Gamma\backslash G} (\bar{g}, \bar{g}') =\inf_{\gamma\in \Gamma} d_G(g,\gamma g')$, which is bi-invariant under $Z(G)$ (by the left-invariance of $ d_G$). 
	
	Let $\langle z \rangle := z-n_z$, where $n_z$ is the integer closest to $z$,  and  $\langle n+1/2 \rangle := 1/2$ for integer $n$.
	In particular, 
	\begin{eqnarray*}
		|\langle z\rangle | = \|z\|:= \min_{n\in\Z} |z-n|. 
	\end{eqnarray*}
	Unlike the abelian case (of $\mathbb{T}^2$), the coset $\overline{\begin{pmatrix}\begin{smallmatrix} 1 & b & c \\ & 1 & a \\ & & 1 \end{smallmatrix}\end{pmatrix}}$  may not have $\begin{pmatrix}\begin{smallmatrix} 1 & \langle b\rangle & \langle c\rangle \\ & 1 & \langle a\rangle \\ & & 1 \end{smallmatrix}\end{pmatrix}$ as a representative. Nonetheless, as  $\begin{pmatrix}\begin{smallmatrix}
			1 & & \Z\\ & 1 & \\ & & 1
	\end{smallmatrix} \end{pmatrix}\subset Z(G)\cap \Gamma$, for any $\bar{g}\in \Gamma\backslash G$, we have 
	\begin{equation}\label{eq3.1}
		\overline{g} \begin{pmatrix}\begin{smallmatrix} 1 & y & z \\ & 1 & x \\ & & 1 \end{smallmatrix}
		\end{pmatrix} = \overline{g} \begin{pmatrix}\begin{smallmatrix} 1 & y & \langle z\rangle \\ & 1 & x \\ & & 1
		\end{smallmatrix}\end{pmatrix}.     
	\end{equation}
	
	\subsection{A metric on $\mathbb{T}\times \Gamma \backslash G$}\label{ss3.2}
	
	Let $d_\T(t,t') := \|t-t'\|$ be the metric on $\T$ and define   the metric $d=d_{\mathbb{T}\times \Gamma \backslash G}$ on $\mathbb{T}\times \Gamma \backslash G$ by
	\begin{eqnarray}\label{eqd}
		d( (t,\overline{g}), (t',\overline{g'}))^2
		&:=&\|t-t'\|^2 +  d_{\Gamma\backslash G}  ( \overline{g}, \overline{g'})^2.
	\end{eqnarray}
	We derive some formulas to evaluate the metric $d$ and clearly it suffices to deal with $d_{\Gamma\backslash G}$. 
	\begin{enumerate}
		\item Suppose $g'= gY$ where $g, g', Y=\begin{pmatrix}\begin{smallmatrix} 1 & b & c \\ & 1 & a \\ & & 1 \end{smallmatrix}\end{pmatrix} \in G$. Let $Y' = \begin{pmatrix}\begin{smallmatrix} 1 & b & \langle c\rangle  \\ & 1 & a \\ & & 1 \end{smallmatrix}\end{pmatrix} \in G$. Then,
		\begin{eqnarray}\label{eq1}
			&& d_{\Gamma\backslash G}  ( \overline{g}, \overline{gY})
			= d_{\Gamma\backslash G}  ( \overline{g}, \overline{gY'}) 
			\le d_{G}  (g, gY')  \nonumber \\
			&\le&
			\left\| \kappa\begin{pmatrix}\begin{smallmatrix} 1 & b & \langle c\rangle  \\ & 1 & a \\ & & 1 \end{smallmatrix}\end{pmatrix}\right\|_\infty
			\le |a| + |b| + \|c\|.
		\end{eqnarray}
		\item More generally, for 
		$$
		g = \begin{pmatrix}\begin{smallmatrix} 1 & y & z  \\ & 1 & x \\ & & 1 \end{smallmatrix}\end{pmatrix},
		g^*= \begin{pmatrix}\begin{smallmatrix} 1 & y^* & z^*  \\ & 1 & x^* \\ & & 1 \end{smallmatrix}\end{pmatrix}, 
		Y= \begin{pmatrix}\begin{smallmatrix} 1 & b & c  \\ & 1 & a \\ & & 1 \end{smallmatrix}\end{pmatrix}, 
		Y^*= \begin{pmatrix}\begin{smallmatrix} 1 & b^* & c^*  \\ & 1 & a^* \\ & & 1 \end{smallmatrix}\end{pmatrix} \in G,
		$$
		we have for {\it any} $\mathfrak{z}, \mathfrak{z}^*\in \Gamma \cap Z(G)$,
		\begin{eqnarray}\label{eq2}
			&& d_{\Gamma\backslash G}  ( \overline{gY}, \overline{g^*Y^*}) 
			\le  d_{\Gamma\backslash G}  ( \overline{gY}, \overline{g^*Y}) + d_{\Gamma\backslash G}  ( \overline{g^*Y}, \overline{g^*Y^*})\nonumber\\ 
			&\le& d_{G}  (gY, g^*Y \mathfrak{z}) +  d_{G}  (g^*Y, g^*Y^*\mathfrak{z}^*)\nonumber\\
			&\le& \left\| \kappa\left(
			Y^{-1}g^{-1} g^* 
			Y \mathfrak{z}\right)\right\|_{\infty}
			+
			\left\| \kappa\left(
			Y^{-1}Y^* 
			\mathfrak{z}^*\right) \right\|_{\infty}.
		\end{eqnarray}
		A direct calculation yields
		$$
		g^{-1}g^* = \begin{pmatrix}\begin{smallmatrix} 1& y^*-y &  \\ & 1 & x^*-x\\ & & 1 \end{smallmatrix}\end{pmatrix}\begin{pmatrix}\begin{smallmatrix} 1&  & y(x-x^*) +z^*-z   \\ & 1 & \\ & & 1 \end{smallmatrix}\end{pmatrix},
		$$
		$$
		Y^{-1} =   \begin{pmatrix}\begin{smallmatrix} 1 & -b   &  \\ & 1 & -a  \\ & & 1 \end{smallmatrix}\end{pmatrix} \begin{pmatrix}\begin{smallmatrix} 1 &    & ab-c \\ & 1 &   \\ & & 1 \end{smallmatrix}\end{pmatrix}, 
		\quad 
		Y^* =   \begin{pmatrix}\begin{smallmatrix} 1 & b^*   &  \\ & 1 & a^*  \\ & & 1 \end{smallmatrix}\end{pmatrix} \begin{pmatrix}\begin{smallmatrix} 1 &    & c^* \\ & 1 &   \\ & & 1 
		\end{smallmatrix}\end{pmatrix}.
		$$
		If $w= ab+ y(x-x^*) +z^*-z$, then 
		\begin{eqnarray*}
			&& Y^{-1} g^{-1}g^* Y\\
			&=& 
			\begin{pmatrix}\begin{smallmatrix} 1 & -b   &  \\ & 1 & -a  \\ & & 1 \end{smallmatrix}\end{pmatrix}
			\begin{pmatrix}\begin{smallmatrix} 1& y^*-y &  \\ & 1 & x^*-x\\ & & 1 \end{smallmatrix}\end{pmatrix}
			\begin{pmatrix}\begin{smallmatrix} 1 & b   &  \\ & 1 & a  \\ & & 1 \end{smallmatrix}\end{pmatrix}
			\begin{pmatrix}\begin{smallmatrix} 1 &    & w  \\ & 1 &   \\ & & 1 \end{smallmatrix}\end{pmatrix}\\
			&=& \begin{pmatrix}\begin{smallmatrix} 1 & y^*-y   &  \\ & 1 & x^*-x  \\ & & 1 \end{smallmatrix}\end{pmatrix}
			\begin{pmatrix}\begin{smallmatrix} 1 &    & W \\ & 1 &   \\ & & 1 \end{smallmatrix}\end{pmatrix}
		\end{eqnarray*}
		with
		\begin{eqnarray*}
			W &=& w+ a (y^*-y) -b (x^*-x) - ab \\
			&=& (-y-b)(x^*-x) +a(y^*-y)+ (z^*-z).
		\end{eqnarray*}
		
		Therefore, for  suitable $\mathfrak{z}, \mathfrak{z}^*\in \Gamma \cap Z(G)$, we have
		\begin{eqnarray*}
			&& Y^{-1} g^{-1}g^* Y\mathfrak{z}
			= \begin{pmatrix}\begin{smallmatrix} 1 & y^*-y   &  \\ & 1 & x^*-x  \\ & & 1 \end{smallmatrix}\end{pmatrix}
			\begin{pmatrix}\begin{smallmatrix} 1 &    & W \\ & 1 &   \\ & & 1 \end{smallmatrix}\end{pmatrix} \mathfrak{z}
			= \begin{pmatrix}\begin{smallmatrix} 1 & y^*-y   &  \\ & 1 & x^*-x  \\ & & 1 \end{smallmatrix}\end{pmatrix}
			\begin{pmatrix}\begin{smallmatrix} 1 &    & \langle W\rangle  \\ & 1 &   \\ & & 1 \end{smallmatrix}\end{pmatrix} 
		\end{eqnarray*}
		and 
		\begin{eqnarray*}
			Y^{-1}Y^* \mathfrak{z}^* &=& \begin{pmatrix}\begin{smallmatrix} 1 & b^*-b   &  \\ & 1 & a^*-a  \\ & & 1 \end{smallmatrix}\end{pmatrix} \begin{pmatrix}\begin{smallmatrix} 1 &    & c^*-c - b(a^*-a) \\ & 1 &   \\ & & 1 
			\end{smallmatrix}\end{pmatrix}\mathfrak{z}^*\\
			&=& \begin{pmatrix}\begin{smallmatrix} 1 & b^*-b   &  \\ & 1 & a^*-a  \\ & & 1 \end{smallmatrix}\end{pmatrix} \begin{pmatrix}\begin{smallmatrix} 1 &    & \langle c^*-c - b(a^*-a)\rangle \\ & 1 &   \\ & & 1 
			\end{smallmatrix}\end{pmatrix}.
		\end{eqnarray*}
		Consequently, by \eqref{eq1} and \eqref{eq2}, 
		\begin{eqnarray}\label{eqdGammaG}
			&& d_{\Gamma\backslash G}  ( \overline{gY}, \overline{g^*Y^*}) \\
			&\le& |x^*-x|+ |y^*-y|  + (|y|+|b|)|x^*-x| + |a||y^*-y| + |z^*-z|  \nonumber\\
			& & + |b-b^*| + |a-a^*| + |b||a-a^*|+\|c^*-c\| \nonumber\\
			&=& (1+|y|+|b|)|x^*-x|+ (1+|a|)|y^*-y| + |z^*-z| \nonumber\\
			&& +(1+|b|)|a-a^*|+|b-b^*| +\|c^*-c\| .\nonumber
		\end{eqnarray}
	\end{enumerate}
	
	\subsection{Continued fractions and Convergents}\label{qk}
	
	Let $\alpha \in [0,1)$ and  $l_k/q_k$ be the $k$-th convergent in the continued fraction expansion of $\alpha$. In particular, $q_0=1$ and $q_1 = [1/\alpha]$. Assume $\alpha$ is irrational. The continued fraction expansion is infinite and  $\cQ=\{q_k\}_{k\ge 0}$ yields an infinite increasing sequence.  The following properties are known (cf. \cite[Lemma 4.1]{HLW} and \cite[Section 3 and (13)]{Faveri}).  
	\begin{itemize}[leftmargin=4mm]
		\item If $|\alpha - l/q|< 1/(2q^2)$ where $l\in \Z$ and $q\in \Z\setminus\{0\}$, then  for some $k\ge 1$,
		\begin{align}\label{p1}
			l/q = l_k/q_k.
		\end{align}
		\item Let $k\ge 1$ be any integer. Then
		\begin{align}\label{p2}
			\frac1{2q_{k+1}} < \|q_k \alpha\| < \frac1{q_{k+1}}, \ \mbox{ and  } \ 
			\|q_k \alpha \|\le \|q\alpha\|, \ \forall \ 1\le q< q_{k+1}.
		\end{align}
		\item For $k\ge 0$ and $1\le c\le q_k$, 
		\begin{align}\label{p3}
			\sum_{q_k\le |q|< q_{k+1}} \frac1{q^2} \min \bigg(\frac1{\|q\alpha\|^2}, c^2\bigg) \ll \frac{c}{q_k}.
		\end{align}
		\item For $k\ge 1$, 
		\begin{align}\label{p4}
			\sum_{0<|q|< q_k} \|q\alpha\|^{-1} \asymp q_k\log (q_k+1) \ \mbox{ and } \ 
			\sum_{0<|q|< q_{k+1}} \|q\alpha\|^{-2} \asymp q_{k+1}^2.
		\end{align}
	\end{itemize}
	The implied constants in $\ll$ and $\asymp$ of \eqref{p3} and \eqref{p4} are absolute.

	
	\section{Initial steps for the proof of Theorem~\ref{intd}}
	
	We consider the family $\cF$ of  Lipschitz continuous functions $g$ on ${\rm X}:= \T\times \Gamma \backslash G$. For any $\nu \in M({\rm X}, S)$ with $S=S_\alpha$, $n\in \N$ and $g\in \cF$, we have
	\begin{eqnarray*}
		\|g\circ S^n-g\|_{L^2(\nu)}^2 \ll_g \int_{\T\times \Gamma\backslash G} d\big( (t,\overline{g}), S^n((t,\overline{g})) \big)^2 \textup{d} \nu(t,\overline{g}).
	\end{eqnarray*}
	Suppose there exist $\varepsilon>\delta >0$ and an unbounded sequence $\{r_n\}_{n\ge 1}\subset \N$ for which 
	\begin{eqnarray}\label{A1}
		\int_{\T\times \Gamma\backslash G} d\big( (t,\overline{g}), S^{jr_n}((t,\overline{g})) \big)^2 \textup{d} \nu(t,\overline{g}) \ll_\varepsilon r_n^{-\varepsilon}, \qquad \forall \ |j|\le r_n^\delta.
	\end{eqnarray}
	Clearly \eqref{A1} implies \eqref{PRr} and the PR rigidity follows.
	
	We shall apply \S\S~\ref{ss3.2} to evaluate the left-hand side of \eqref{A1}, and beforehand, we need explicit formulas for $S^n$.
	
	\subsection{Formula for the iteration of $S$}\label{ss4.1}
	Let $l_\alpha$ be the translation $l_\alpha(t)=t+\alpha$. From \S\S~\ref{ss3.1}, we express \eqref{S} as $S(t,\overline{g}) = (l_\alpha (t), \overline{g} h_0(t) w_0(t))$ where 
	$$
	h_0(t) =  \begin{pmatrix}\begin{smallmatrix} 1 & \varphi(t)   &  \\ & 1 & \eta(t)  \\ & & 1\end{smallmatrix}\end{pmatrix}
	\quad
	\mbox{ and }
	\quad 
	w_0(t)= \begin{pmatrix}\begin{smallmatrix} 1 & \phantom{\varphi(t)}  & \psi(t) \\ & 1 & \phantom{\eta(t)} \\ & & 1\end{smallmatrix}\end{pmatrix}.
	$$
	The $n$-fold composition $S^n$ is clearly given by
	\begin{eqnarray*}
		S^n(t,\overline{g})
		&=&
		\big(l_\alpha^n (t), \overline{g}(h_0h_1\cdots h_{n-1})\cdot (w_0w_1\cdots w_{n-1})(t)\big),
	\end{eqnarray*}
	where $h_r(t):= h_0\circ l_\alpha^r(t) = h_0(t+r\alpha)$ and $w_r := w_0\circ l_\alpha^r$.
	Define for $n\ge 1$,
	\begin{eqnarray}\label{pxoh}
		&& \left\{
		\begin{array}{rcl}
			\Phi_n(t)&=& \ds \sum_{r=0}^{n-1}\varphi\circ l_\alpha^r(t) = \sum_{r=0}^{n-1}\varphi(t+r\alpha),\vspace{2mm}\\
			\xi_n(t) &=& \ds \sum_{r=0}^{n-1} \eta\circ l_\alpha^r (t) = \sum_{r=0}^{n-1}\eta(t+r\alpha),\vspace{2mm}\\
			\Psi_n(t)&=& \ds\Omega_n(t)  + H_n(t)
			= \sum_{r=0}^{n-1}\psi(t+r\alpha) + \sum_{j=1}^{n-1} \sum_{r=0}^{j-1} \varphi(t+r\alpha) \cdot\eta(t+j\alpha).
		\end{array}
		\right.
	\end{eqnarray}
	Tacitly empty summation means 0, which happens when $n=1$, and $\Omega_n(t)$ and $H_n(t)$ are respectively defined by the sum over $r$ and the double sum over $j,r$. We 
	observe that the term $(w_0w_1\cdots w_{n-1})(t)$ is an central element whose upper rightmost element is $\Omega_n(t)$ and
	\begin{eqnarray*}
		(h_0h_1\cdots h_{n-1})(t) = \begin{pmatrix}\begin{smallmatrix}
				1 & \Phi_n(t)  &  \\ & 1 & \xi_n(t) \\ & & 1
		\end{smallmatrix} \end{pmatrix} \begin{pmatrix}\begin{smallmatrix} 1 &   & H_n(t) \\ & 1 &  \\ & & 1\end{smallmatrix}\end{pmatrix},  
	\end{eqnarray*}
	hence deduce that 
	\begin{eqnarray}\label{Sn}
		S^n(t,\overline{g})
		=
		\left(l_\alpha^n (t), \overline{g} \begin{pmatrix}\begin{smallmatrix} 1 & \Phi_n(t)   & \Psi_n(t) \\ & 1 & \xi_n(t)  \\ & & 1\end{smallmatrix} \end{pmatrix}\right).
	\end{eqnarray}
	
	\subsection{Bounding the metric on $\mathbb{T}\times \Gamma \backslash G$}
	Using \eqref{eqd}, \eqref{Sn}, \eqref{eq3.1} and \eqref{eqdGammaG},  we have 
	\begin{eqnarray}\label{d}
		d\big( (t,\overline{g}), S^n((t,\overline{g})) \big)^2
		&=&\|n\alpha\|^2 +  d_{\Gamma\backslash G}      \left( \overline{g}, \overline{g}\begin{pmatrix}\begin{smallmatrix}
				1 & \Phi_n(t)   & \langle \Psi_n(t) \rangle \\ & 1 & \xi_n(t)  \\ & & 1 \end{smallmatrix}\end{pmatrix}  \right)^2 \nonumber\\
		&\ll& \|n\alpha\|^2+    |\Phi_n(t)|^2  + |\xi_n(t)|^2 + \|\Omega_n(t)\|^2  + \|H_n(t)\|^2
	\end{eqnarray}
	as $|\langle \Psi_n(t)\rangle | = \|\Psi_n(t)\|$ and $\|\cdot \|$ is a norm. By \eqref{d}, we obtain
	\begin{eqnarray*}
		&&\int_{\T\times \Gamma\backslash G} d\big( (t,\overline{g}), S^n((t,\overline{g})) \big)^2 \textup{d} \nu(t,\overline{g})\nonumber\\
		&\ll& \|n\alpha\|^2+  \int_{\T\times \Gamma\backslash G}  \big(|\Phi_n(t)|^2  + | \xi_n(t)|^2 + \|\Omega_n(t)\|^2  +\|H_n(t)\|^2 \big)  \textup{d} {\nu}(t,\overline{g}).
	\end{eqnarray*}
	The pushforward measure $\pi_*\nu$ of the projection map $\pi(t,\overline{g})=t$ is a Borel probability measure on the torus $\T$. When $\alpha$ is irrational, this measure is rotational invariant and hence equal to the normalized Haar measure (which is also the Lebesgue measure) on $\T$.   
	Since the integrand depends only on $t$ (and not on $\overline{g}$), we obtain that
	\begin{eqnarray}\label{target1}
		&& \int_{\T\times \Gamma\backslash G} d\big( (t,\overline{g}), S^n((t,\overline{g})) \big)^2 \textup{d} \nu(t,\overline{g})
		\le  \|n\alpha\|^2+ I_\Phi(n) + I_\xi(n) + I_\Omega(n) + I_H(n),
	\end{eqnarray}
	where 
	\begin{eqnarray*}
		\ds I_\Phi(n) = \int_\T |\Phi_n(t)|^2 \textup{d}t, &&  
		\ds I_\xi(n) = \int_\T |\xi_n(t)|^2 \textup{d}t,\\
		\ds I_\Omega(n) = \int_\T \|\Omega_n(t)\|^2 \textup{d}t, && 
		\ds I_H(n) = \int_\T \|H_n(t)\|^2 \textup{d}t.
	\end{eqnarray*}
	
	In view of \eqref{pxoh} and the work in \cite{HW}, we shall only deal with $I_H(n)$ in detail.  The outline of the treatment for $I_\Phi(n), I_\xi(n)$ and $I_\Omega (n)$ are the same as in \cite{Faveri}. 
	
	\subsection{Treatments for $H_n(t)$}
	We decompose $H_n(t)$ with the Fourier expansion  
	\begin{eqnarray*}
		\varphi(t) =\sum_{u\neq0} \widehat{\varphi}(u) e(ut),   \qquad   \quad
		\eta(t) =\sum_{v\neq0} \widehat{\eta}(v) e(vt)
	\end{eqnarray*} 
	of $\varphi$ and $\eta$.   Plainly, we have
	\begin{eqnarray}\label{H}
		H_n(t)
		&=&\sum_{j=1}^{n-1} \sum_{r=0}^{j-1} \varphi(t+r\alpha)  \eta(t+j\alpha)\nonumber\\
		&=&\sum_{0\neq u,v\in \Z} \widehat{\varphi}(u)\widehat{\eta}(v) e((u+v)t) \sum_{j=1}^{n-1} \sum_{r=0}^{j-1}  e((ur+vj)\alpha) \nonumber\\
		&=& \sum_{u+v=0} + \sum_{u+v\neq 0} =:\Sigma_{n,0} + \Sigma_n(t), \mbox{ say}.  
	\end{eqnarray}
	Apparently the double sum over $j$ and $r$ in $\Sigma_{n,0}$ is 
	\begin{eqnarray}\label{w0}
		\sum_{j=1}^{n-1} \sum_{r=0}^{j-1}  e(u(r-j)\alpha)
		&=& 
		\sum_{j=1}^{n-1} e(-uj\alpha) \frac{1-e(uj\alpha)}{1-e(u\alpha)} \nonumber\\
		&=& \frac1{1-e(u\alpha)}\bigg(\sum_{j=0}^{n-1} e(-uj\alpha)-\sum_{j=0}^{n-1} 1\bigg)
		\nonumber\\
		&=&\frac1{1-e(u\alpha)} \Big\{\frac{1-e(-nu\alpha)}{1-e(-u\alpha)} - n\Big\},
	\end{eqnarray}
	and the double sums in $\Sigma_n(t)$ can be expressed in two ways:
	\begin{eqnarray}\label{w1}
		\sum_{j=1}^{n-1} \sum_{r=0}^{j-1}  e((ur+vj)\alpha) 
		&=& \sum_{j=1}^{n-1}  e(vj\alpha)\frac{1-e(uj\alpha)}{1-e(u\alpha)}\nonumber\\
		&=& \frac1{1-e(u\alpha)}\bigg(\sum_{j=0}^{n-1} e(vj\alpha)-\sum_{j=0}^{n-1} e((u+v)j\alpha)\bigg)
		\nonumber\\
		&=&\frac1{1-e(u\alpha)} \Big\{ \frac{1-e(nv\alpha)}{1-e(v\alpha)} - \frac{1-e(n(u+v)\alpha)}{1-e((u+v)\alpha)}\Big\}
	\end{eqnarray}
	or
	\begin{eqnarray}\label{w2}
		&& \sum_{j=1}^{n-1} \sum_{r=0}^{j-1}  e((ur+vj)\alpha) =\sum_{r=0}^{n-2} \sum_{j=r+1}^{n-1}e((ur+vj)\alpha) \nonumber\\
		&=& \sum_{r=0}^{n-2}  e(ur\alpha)e(v(r+1)\alpha)\frac{1-e(v(n-r-1)\alpha)}{1-e(v\alpha)}\nonumber\\
		&=& \frac1{1-e(v\alpha)}\bigg(e(v\alpha)\sum_{r=0}^{n-2}  e((u+v)r\alpha)-e(nv\alpha)\sum_{r=0}^{n-2}  e(ur\alpha)\bigg)\nonumber\\     
		&=& \frac{1}{1-e(v\alpha)} \Big\{ \frac{1-e(n(u+v)\alpha)}{1-e((u+v)\alpha)}e(v\alpha) - \frac{1-e(nu\alpha)}{1-e(u\alpha)} e(nv\alpha)\Big\}.
	\end{eqnarray}
	Here we have inserted $e(v\alpha) e((u+v)(n-1)\alpha)-e(nv\alpha)e(u(n-1)\alpha)$, which equals zero, in the bracket of the second last line in order to get the formula in \eqref{w2}.

	\section{Proof of Theorem~\ref{intd}}
	We need to construct an unbounded sequence of natural numbers $\{r_n\}$ for which \eqref{A1} holds.  
	Let $\cQ:=\{q_m\}_{m\ge 0}$ be the set of the denominators of the convergents arising from the continued fraction expansion of $\alpha$  as in \S\S~\ref{qk}.   The $C^{1+\varepsilon}$ and $C^{2+2\varepsilon}$ smoothness imply that given any  $\varepsilon\in (0,\frac1{100})$,  
	\begin{eqnarray}\label{eqB}
		\widehat{\varphi}(u), \widehat{\eta}(u) \ll |u|^{-B} \quad  \mbox{ and }\quad \widehat{\psi}(u)\ll |u|^{-B/2}
	\end{eqnarray} 
	for $u\neq 0$, where $B=2+ 2\varepsilon>2$. Set  $\theta = \frac18\varepsilon$, so  
	\begin{align}\label{eqBt}
		B=2+16\theta \quad \mbox{ and } \quad  0<\theta <\frac1{12}(B-2)=\frac16\varepsilon.
	\end{align}
	We shall construct an unbounded subset $\cQ''\subset \cQ$ and  an integer $v_m \in [1, q_m^{\theta/2}]$  for every $q_m\in \cQ''$ such that the right-hand side of \eqref{target1} is  
	$\ll  r_m^{-\varepsilon/120}$ when $n=jr_m$,  where $r_m:=v_mq_m$ and $|j|< r_m^{\theta/10}$. The construction depends on the separation of consecutive $q_m$'s in $\cQ$ and hence on $\alpha$. 
	
	Let     $\cQ':= \{ q_m\in \cQ: q_{m+1} \ge  q_m^{2+2\theta}\}$. 
	If $\cQ'$ is finite, then  $q_m^{2+2\theta} > q_{m+1}$ for all $m\ge  m_0$, for some $m_0$. 
	Define
	\begin{eqnarray*}
		\cQ'' = \left\{
		\begin{array}{ll} 
			\cQ' & \mbox{ if $\cQ'$ is infinite},\\
			\{q_l\in \cQ: l\ge m_0\} & \mbox{ if $\cQ'$ is finite}.
		\end{array}
		\right.
	\end{eqnarray*}
	If we set $s_m:= jv_m$ where $|j|< r_m^{\theta/10}$, then $s_m \le q_m^\theta$ and $jr_m = s_mq_m$. Therefore, we shall estimate the integrals in \eqref{target1} for $n \in \{s_mq_m: s_m\in [1,q_m^\theta], q_m \in \cQ''\} $. As discussed after \eqref{target1}, 
	we invoke from \cite{HW} for the estimates of $I_\Phi, I_\xi, I_\Omega$. The integral
	$I_H(n)$ is evaluated using the following lemma which will be proved  with tools from  \cite{KLR}, \cite{Faveri} and \cite{HW} in the next section.
	
	\begin{lemma}\label{lem31} Let $q_m\in \cQ''$ and set
		$$\lambda(q_m) := \sum_{0< |u|< q_m} \frac{\widehat{\varphi}(u)\widehat{\eta}(-u)}{1-e(-u\alpha)}.
		$$
		For $n= s_mq_m$, where $s_m\in [1,q_m^\theta]$ is any integer, we have
		$$
		\int_{\mathbb{T}} \|H_n(t)\|^2\,\textup{d} t \ll \|s_mq_m \lambda(q_m) \|^2 + q_m^{-\theta}.
		$$
	\end{lemma}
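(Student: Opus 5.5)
We split $H_n(t)=\Sigma_{n,0}+\Sigma_n(t)$ as in \eqref{H}. Since $\|\cdot\|$ is subadditive and $\|x\|\le|x|$, we get
$$\int_\T\|H_n(t)\|^2\,\textup{d}t\ll\|\Sigma_{n,0}\|^2+\int_\T|\Sigma_n(t)|^2\,\textup{d}t .$$
The first term will produce $\|s_mq_m\lambda(q_m)\|^2$ up to an error $q_m^{-\theta}$. The second will be bounded by $q_m^{-\theta}$ through Parseval and the estimates \eqref{p2}--\eqref{p4}.

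\emph{The constant term.} By \eqref{w0},
$$\Sigma_{n,0}=\sum_{u\ne0}\widehat\varphi(u)\widehat\eta(-u)\Big(\frac{1-e(-nu\alpha)}{|1-e(u\alpha)|^2}-\frac{n}{1-e(u\alpha)}\Big).$$
Pairing $u$ with $-u$ shows that the $n$-part is $-n\sum_u\widehat\varphi(u)\widehat\eta(-u)/(1-e(u\alpha))$. We split this at $|u|<q_m$ and $|u|\ge q_m$; the range $|u|<q_m$ gives $-n\lambda(q_m)$ after relabelling $u\to -u$. In that range, \eqref{p2} gives $\|nu\alpha\|\le s_m|u|\,\|q_m\alpha\|\le s_m|u|/q_{m+1}$. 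Hence
$$|1-e(-nu\alpha)|\ll s_m|u|/q_{m+1},$$
so the first part contributes $\ll (s_m/q_{m+1})\sum|u|^{1-2B}\|u\alpha\|^{-2}$. We bound this by a dyadic-in-$q_k$ decomposition with \eqref{p3}, which yields roughly $s_m/q_m\le q_m^{\theta-1}$. For $|u|\ge q_m$ the $n$-part is bounded by
$$n\sum_{|u|\ge q_m}|u|^{-2B}\|u\alpha\|^{-1}\ll n\,q_m^{1-2B}\log q_m,$$
using \eqref{p4} blockwise together with $\|u\alpha\|\ge\|q_k\alpha\|$. Since $n\le q_m^{1+\theta}$, this is a negative power of $q_m$. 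The nonlinear part for $|u|\ge q_m$ is handled the same way with \eqref{p3}. Altogether,
$$\Sigma_{n,0}=-s_mq_m\lambda(q_m)+O(q_m^{-\theta}),$$
and therefore $\|\Sigma_{n,0}\|\le\|s_mq_m\lambda(q_m)\|+O(q_m^{-\theta})$.

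\emph{The oscillating term.} By Parseval,
$$\int|\Sigma_n|^2=\sum_{w\ne0}\Big|\sum_{u+v=w}\widehat\varphi(u)\widehat\eta(v)D_n(u,v)\Big|^2,$$
where $D_n(u,v)$ is the double sum. For each pair we choose whichever of \eqref{w1} and \eqref{w2} is favourable: \eqref{w1} when $|u|\le|v|$ and \eqref{w2} otherwise, so that the prefactor $1/(1-e(\cdot\,\alpha))$ carries the smaller frequency and the stronger Fourier decay sits on the larger one. Each geometric ratio $(1-e(nx\alpha))/(1-e(x\alpha))$ is bounded by $\min(n,\|x\alpha\|^{-1})$. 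For $|x|<q_m$ we also use the sharper bound $\ll s_m|x|\,q_m/q_{m+1}\cdot\|x\alpha\|^{-1}$, which is small on $\cQ''$. In the case $\cQ'$ infinite this uses $q_{m+1}\ge q_m^{2+2\theta}$. In the case $\cQ'$ finite, $q_{m+1}<q_m^{2+2\theta}$, and we rely instead on \eqref{p4} and a rough $\min(n,\cdot)$ bound, since $n$ is only about $q_m^{1+\theta}$.

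We apply Cauchy--Schwarz in $u$ with weight $|u|^{-B}|v|^{-B}$ and sum over $w$ using the $|\cdot|^{-B}$ decay. The task then reduces to estimating sums of the form
$$\sum_x|x|^{-B}\min(n,\|x\alpha\|^{-1})^2\,\|y\alpha\|^{-2},$$
which we control blockwise by \eqref{p3} and \eqref{p4}. The result should be $\ll q_m^{-\theta}$, provided $B=2+16\theta$ and $s_m\le q_m^\theta$.

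\emph{Main obstacle.} The hard part is this oscillating term when both $u$ and $v$ are small while $u+v$ lies near a multiple of a large $q_k$. In that regime all three denominators can be small simultaneously and the available decay is only $|u|^{-B}$ with $B$ just above $2$. Here I would exploit the freedom to choose between \eqref{w1} and \eqref{w2}, which is exactly what replaces the symmetry condition (ii). I would also split according to whether $|u+v|$ is below $q_m$ or not, and use \eqref{p2} to bound $\|(u+v)\alpha\|$ from below.
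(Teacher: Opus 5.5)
Your outline follows the paper's: the split \eqref{intH}, extracting the linear part of $\Sigma_{n,0}$, Parseval for $\Sigma_n(t)$, choosing between \eqref{w1} and \eqref{w2} so that the prefactor carries the smaller frequency, and a case split by the growth of $q_{m+1}$.

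\textbf{The main gap: the oscillating term is not proved.} The central estimate, $\int_\T|\Sigma_n(t)|^2\,\textup{d}t\ll q_m^{-\theta}$ when $0<|u|,|v|\le q_m/4$ and $\#\cQ'<\infty$ (Case (ii)), is left at ``should be''. The route you indicate for that case cannot work as stated.
\begin{itemize}
\item A bound of the form $\min(n,\|x\alpha\|^{-1})$ gives no decay in $q_m$ once both frequencies are small; for $u=v=1$ it only gives $O(1)$.
\item Lower bounds from \eqref{p2} such as $\|x\alpha\|\gg q_m^{-1}$ for $0<|x|<q_m$ only reproduce the Case (i) bound $q_m^{4+2\theta}q_{m+1}^{-2}$. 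That is useless when $q_{m+1}<q_m^{2+2\theta}$.
\item The decay must come from $|1-e(nx\alpha)|\ll s_m|x|\,\|q_m\alpha\|\le |x|q_m^{\theta}/q_{m+1}$. Your ``sharper bound'' carries a spurious factor $q_m$, which would already break Case (i).
\item In Case (ii) this gain buys only $q_m^{2\theta}q_{m+1}^{-2}\le q_m^{2\theta-2}$. You must then prove that the remaining small-divisor sums are $\ll q_m^{2-3\theta}$.
\end{itemize}
The paper does this as follows. It expands the square while keeping the constraint $u+v=u'+v'$, so that $\|v\alpha\|^{-1}\|v'\alpha\|^{-1}$ (and $\|u\alpha\|^{-1}\|u'\alpha\|^{-1}$) enter as products of first powers. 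It then decomposes into blocks $[q_l,q_{l+1})$ and repeatedly trades $q_{l+1}$ either for $q_l^{2+2\theta}$ or for $q_m$ (since $q_{l+1}\le q_m$ for $l<m$). The first trade is valid for every $l$ because $\cQ'$ is finite; you only record $q_{m+1}<q_m^{2+2\theta}$. There is separate bookkeeping for $b<d$, for $b=d$, and for the sums $J_\pm$, $K$ coming from the denominator $1-e((u+v)\alpha)$. Your Cauchy--Schwarz step replaces those products by squares $\|x\alpha\|^{-2}$, which blockwise cost $q_{l+1}^2$ instead of $q_{l+1}\log q_{l+1}$. Nothing in the proposal checks that the estimate still closes after that loss.

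\textbf{The constant term also contains errors.}
\begin{itemize}
\item For $|u|\ge q_m$ you cannot separate the $n$-part from the rest. The single term $u=q_m$ of your majorant is $nq_m^{-2B}\|q_m\alpha\|^{-1}>nq_m^{-2B}q_{m+1}$. This is unbounded, because in Case (i) nothing bounds $q_{m+1}$ in terms of $q_m$; blockwise \eqref{p4} gives $q_k^{-2B}q_{k+1}\log q_{k+1}$, not $q_k^{1-2B}\log q_k$. The two pieces cancel. The paper instead bounds the whole bracket, a sum of fewer than $n^2$ unimodular terms, by $n^2$; see \eqref{0b}.
\item For $|u|<q_m$ the claimed size $s_m/q_m$ is false. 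The term $u=q_{m-1}$ alone is $\asymp q_{m-1}^{1-2B}q_m^2$, which exceeds $q_{m+1}/q_m$ when, e.g., $q_{m+1}\asymp q_m\approx q_{m-1}^2$. Also \eqref{p3} caps $\|q\alpha\|^{-2}$ at $c^2\le q_k^2$, so it does not apply. You need the case split. In Case (i), use $\|u\alpha\|^{-1}\ll q_m$ together with $q_{m+1}\ge q_m^{2+2\theta}$. In Case (ii), bound by $\frac{q_m^\theta}{q_{m+1}}\sum_{l<m}q_l^{1-2B}q_{l+1}^2$ and write $q_{l+1}^2=q_{l+1}^{1-2\theta}q_{l+1}^{1+2\theta}$.
\item Relabelling $u\to-u$ swaps $\widehat\varphi$ and $\widehat\eta$, so it does not turn $\Sigma^{<,2}_{q_m,0}$ of \eqref{sl2} into $\lambda(q_m)$. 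Since $\frac1{1-e(x)}+\frac1{1-e(-x)}=1$, one has $\lambda(q_m)=\sum_{0<|u|<q_m}\widehat\varphi(u)\widehat\eta(-u)-\Sigma^{<,2}_{q_m,0}$. The two agree when $\varphi=\eta$, but not in general, and that symmetry is exactly what the paper removes.
\end{itemize}
What your computation and the paper's (see \eqref{lemt}) actually control is $\|s_mq_m\Sigma^{<,2}_{q_m,0}\|$. The denominator $1-e(-u\alpha)$ in the displayed $\lambda(q_m)$ should read $1-e(u\alpha)$. This is harmless for Theorem~\ref{intd}, since Dirichlet's theorem is applied to whatever real number appears.
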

	
	Applying the argument for $\int_{\mathbb T} |\Phi(n,t)|^2\,\textup{d} t$
	in \cite[p.479-480]{HW}, we can derive that for $q_m\in \cQ''$ and  $n=s_m q_m$, 
	\begin{eqnarray*}
		\int_{\T} |\Phi_n(t)|^2\, \textup{d} t 
		+ \int_{\T} |\xi_n(t)|^2\,  \textup{d} t
		\ll q_m^{-\varepsilon/2},
	\end{eqnarray*} 
	and together with a separation of the $0$-th Fourier coefficient,
	\begin{eqnarray*}
		\int_{\T}   \|\Omega_n(t)\|^2 \,  \textup{d} t
		&\ll& \|s_mq_m \widehat{\psi}(0)\|^2 + 
		\int_{\T}   |\Omega_n(t) - n\widehat{\psi}(0)|^2 \,  \textup{d} t\\
		&\ll&  \|s_mq_m \widehat{\psi}(0)\|^2 + q_m^{-\varepsilon/2}.   
	\end{eqnarray*}
	
	By Dirichlet's approximation theorem, we can choose $1\le v_m \le q_m^{\theta/2}$ such that
	$$ \|v_mq_m\alpha\|^2, \|v_mq_m \widehat{\psi}(0)\|^2, \|v_mq_m \lambda(q_m) \|^2 \le q_{m}^{-\theta/3}. 
	$$
	Thus, $ \|s_mq_m\alpha\|^2$, $\|s_mq_m \widehat{\psi}(0)\|^2$, $\|s_mq_m \lambda(q_m) \|^2$ are $ \le q_{m}^{-\theta/12}$ for $s_m=jv_m$ with $|j|<r_m^{\theta/10}$ ($\le q_m^{\theta/8}$  as $r_m = v_mq_m \le q_m^{5/4}$).
	Consequently, the right-side of \eqref{target1} is $\ll q_m^{-\varepsilon/2} + q_m^{-\theta/12} \le r_m^{-\varepsilon/120}$, recalling $\theta=\frac18\varepsilon$. This leads to \eqref{A1} by taking $\delta = \frac1{ 10}\theta$ and replacing $\varepsilon$ with $120\varepsilon$.  Theorem~\ref{intd} follows readily.

	\section{Proof of Lemma~\ref{lem31}}
	By \eqref{H} and \eqref{w0}, we get
	\begin{equation}\label{intH}
		\int_{\mathbb{T}} \|H_n(t)\|^2\,\textup{d} t \ll \|\Sigma_{n,0}\|^2 + \int_{\mathbb{T}} |\Sigma_n(t)|^2\,\textup{d} t,    
	\end{equation}
	where 
	\begin{eqnarray}
		\Sigma_{n,0} &=& \sum_{0\neq u\in \Z} \frac{\widehat{\varphi}(u)\widehat{\eta}(-u)}{1-e(u\alpha)}  \Big\{\frac{1-e(-nu\alpha)}{1-e(-u\alpha)} - n\Big\}, \nonumber\\
		\Sigma_n(t)  &=& \sum_{\substack{0\neq u,v\in \Z\\ u+v\neq 0}} \widehat{\varphi}(u)\widehat{\eta}(v) e((u+v)t)   \sum_{j=1}^{n-1} \sum_{r=0}^{j-1}  e((ur+vj)\alpha). \label{snt}
	\end{eqnarray}
	Recall $n=s_mq_m$. Our task will be showing 
	\begin{eqnarray}\label{lemt}
		\Sigma_{n,0} = -n  \Sigma_{q_m,0}^{<,2} +O(q_m^{-\theta})\quad \mbox{ and } \quad \int_{\T}     |\Sigma_n(t)|^2\,\textup{d} t \ll q_m^{-\theta},
	\end{eqnarray}
	where $\Sigma_{q_m,0}^{<,2}$, defined in \eqref{sl2} below, is a real number independent of $s_m$. 
	
	We are going to separate into two cases to handle \eqref{intH} according as $\cQ''=\cQ'$ or not, corresponding to (i) $\# \cQ'=\infty$ or (ii) $\# \cQ'<\infty$. \S\S~\ref{1a}-\ref{1b} and \S\S~\ref{1c}-\ref{1d} below are devoted to  Cases (i) and (ii) respectively. The (latter) case for finite $\cQ'$ requires more delicate analysis. Note from \eqref{eqBt} that the exponent $B$ satisfies $B=2+2\varepsilon >2+12 \theta$, hence $\sum_{q_m\in S} q_m^{1+\Delta -B}<\infty$ for any $\Delta <12 \theta$ and any (infinite) subset $S\subset \N$.
	
	{\bf Case (i):} $\# \cQ'=\infty$. In this case, $n=s_mq_m$,  where $q_m\in \cQ'$ and $s_m\in [1, q_m^\theta]$ and thus $q_{m+1} \ge q_m^{2+2\theta}$. 
	
	\subsection{Case (i): Evaluation of $\Sigma_{n,0}$}\label{1a} We split 
	\begin{eqnarray}\label{sn0}
		\Sigma_{n,0} &=& \sum_{0\neq u\in \Z} \frac{\widehat{\varphi}(u)\widehat{\eta}(-u)}{1-e(u\alpha)}  \Big\{\frac{1-e(-nu\alpha)}{1-e(-u\alpha)} - n\Big\} \nonumber\\
		&=& \sum_{0<|u|<q_m} + \sum_{|u|\ge q_m} =:\Sigma_{n,0}^{<} +\Sigma_{n,0}^{\ge}, \mbox{ say}.
	\end{eqnarray}
	Clearly, bounding  trivially with \eqref{w0}, \eqref{eqB} and \eqref{eqBt} yields 
	\begin{eqnarray}\label{0b}
		\Sigma_{n,0}^{\ge} &\ll&  n^2 \sum_{u\ge q_m} u^{-2B} \ll q_m^{3+2\theta - 2B}\ll q_m^{-\theta}.
	\end{eqnarray}
	
	Write 
	\begin{eqnarray*}
		\Sigma_{n,0}^{<} = \Sigma_{n,0}^{<,1} - n\Sigma_{q_m,0}^{<,2},
	\end{eqnarray*}
	where
	\begin{eqnarray*}
		\Sigma_{n,0}^{<,1} := \sum_{0< |u|< q_m} \frac{\widehat{\varphi}(u)\widehat{\eta}(-u)}{1-e(u\alpha)}  \frac{1-e(-nu\alpha)}{1-e(-u\alpha)}     
	\end{eqnarray*}
	and 
	\begin{eqnarray}\label{sl2}
		\Sigma_{q_m,0}^{<,2}:= \sum_{0< |u|< q_m} \frac{\widehat{\varphi}(u)\widehat{\eta}(-u)}{1-e(u\alpha)} .
	\end{eqnarray}
	Here we note that $\Sigma_{q_m,0}^{<,2}$ is independent of $s_m$. 
	
	Using $1-e(nu\alpha)\ll \|us_m q_m\alpha\|<|us_m|\|q_m\alpha\|\le |u|q_m^\theta q_{m+1}^{-1}$ and  $\|u\alpha\| > \|q_{m-1} \alpha\|\gg q_{m}^{-1}$ for $0<|u|< q_{m}$ by \eqref{p2}, we infer with \eqref{eqB} that 
	\begin{eqnarray*}
		\Sigma_{n,0}^{<,1}\ll \frac{q_m^\theta}{q_{m+1}}\sum_{0< u< q_m} u^{1-2B}\|u\alpha\|^{-2}
		\ll \frac{q_m^{2+\theta}}{q_{m+1}}\sum_{0< u< q_m} u^{1-2B} \ll \frac{q_m^{2+\theta}}{ q_{m+1}}. 
	\end{eqnarray*}
	Hence, we obtain by $q_{m+1} \ge q_m^{2+2\theta}$ that
	\begin{eqnarray*}
		\Sigma_{n,0} = -n  \Sigma_{q_m,0}^{<,2} +O(q_m^{-\theta} + q_m^{2+\theta}q_{m+1}^{-1})= -n  \Sigma_{q_m,0}^{<,2} +O(q_m^{-\theta}) .
	\end{eqnarray*}

	\subsection{Case (i): Evaluation of $\Sigma_n(t)$}\label{1b} We split $\Sigma_n(t)$ into four subsums according to $|u|\le q_m/4$ or $|v|\le  q_m/4$ or not,
	\begin{eqnarray*}
		\Sigma_n(t) := \Sigma_n^{>,>}(t) + \Sigma_n^{\le ,>}(t) + \Sigma_n^{>,\le }(t) + \Sigma_n^{\le ,\le }(t), 
	\end{eqnarray*}
	which leads to 
	\begin{eqnarray*}
		\int_{\mathbb{T}}|\Sigma_n(t) |^2\,\textup{d} t\ll \sum_{a,b\in \{>,\leq \}} I^{a,b},
	\end{eqnarray*}
	where $I^{a,b}:=\int_{\mathbb{T}}| \Sigma_n^{a,b}(t)|^2\,\textup{d} t$.
	
	\subsubsection{Both $|u|$ and $|v|> q_m/4$} In \eqref{snt}, we bound the double sum over $j,r$ trivially (in the same fashion as in \eqref{0b}) and get 
	\begin{eqnarray*}
		I^{>,>} 
		&\le&  n^4\sum_{\substack{|u_1|, |u_2|,|v_1|,|v_2| > q_m/4 \\ u_1+v_1= u_2+v_2}} |\widehat{\varphi}(u_1)\widehat{\eta}(v_1)\widehat{\varphi}(u_2)\widehat{\eta}(v_2)|\\  
		&\ll&  q_m^{4+4\theta}\sum_{\substack{|u_1|, |u_2|,|v_1|,|v_2| > q_m/4 \\ u_1+v_1= u_2+v_2}} |u_1u_2v_1v_2|^{-B}\\  
		&\ll&  q_m^{4+4\theta-B}\sum_{|u_1|, |u_2|,|v_1| > q_m/4} |u_1u_2v_1|^{-B}\ll q_m^{7+4\theta-4B} \ll q_m^{-1-4\theta}.
	\end{eqnarray*}
	
	\subsubsection{$0<|u|\le q_m/4$ and $|v|> q_m/4$} Let 
	\begin{eqnarray*}
		f(u,v) := \frac{1}{1-e(u\alpha)} \Big\{ \frac{1-e(nv\alpha)}{1-e(v\alpha)} - \frac{1-e(n(u+v)\alpha)}{1-e((u+v)\alpha)} \Big\}.
	\end{eqnarray*}
	By \eqref{w1} and \eqref{snt}, we get
	\begin{eqnarray*}
		I^{\le ,>} 
		&=&  \sum_{\substack{0<|u_1|, |u_2|\le q_m/4\\ |v_1|,|v_2| > q_m/4 \\ u_1+v_1= u_2+v_2}} \widehat{\varphi}(u_1)\widehat{\eta}(v_1)f(u_1,v_1)\overline{\widehat{\varphi}(u_2)\widehat{\eta}(v_2) f(u_2,v_2)}. 
	\end{eqnarray*}
	As $|ab|\le |a|^2+|b|^2$ and $|a+b|^2\le 2|a|^2+2|b|^2$, we deduce that
	\begin{eqnarray}\label{I1}
		I^{\le ,>} \ll S_1 + S_2,
	\end{eqnarray}
	where 
	\begin{eqnarray}\label{S12}
		\begin{array}{rcl}
			S_1 &:=& \displaystyle \sum_{\substack{0<|u_1|, |u_2|\le q_m/4\\ |v_1|,|v_2| > q_m/4 \\ u_1+v_1= u_2+v_2}} \frac{|u_1v_1u_2v_2|^{-B}}{|1-e(u_1\alpha)||1-e(u_2\alpha)|}  \Big|\frac{1-e(n(u_1+v_1)\alpha)}{1-e((u_1+v_1)\alpha)}\Big|^2 , \\ 
			S_2 &:=& \displaystyle \sum_{\substack{0<|u_1|, |u_2|\le q_m/4\\ |v_1|,|v_2| > q_m/4 \\ u_1+v_1= u_2+v_2}} \frac{|u_1v_1u_2v_2|^{-B}}{|1-e(u_1\alpha)||1-e(u_2\alpha)|} \Big|\frac{1-e(nv_1\alpha)}{1-e(v_1\alpha)} \Big|^2.
		\end{array}
	\end{eqnarray}
	Write $l=|u_1+v_1|$ and note that  for $0<|u|<q_m$,  $\|u\alpha\|>\|q_{m-1}\alpha\|\gg q_m^{-1}$ by \eqref{p2}, and 
	$$
	\frac{1-e(nl\alpha)}{1-e(l\alpha)} \ll \min\Big(\frac{s_m l q_{m+1}^{-1}}{\|l\alpha\|},n\Big)\le \min \Big(\frac{q_m^\theta l}{q_{m+1}\|l\alpha\|}, n\Big).
	$$ 
	We split the sum into two pieces according to $l< q_{m+1}$ or not, and (use \eqref{p2} again to) bound $|1-e(u_1\alpha)|^{-1}|1-e(u_2\alpha)|^{-1}$ by $q_m^2$, thus 
	\begin{eqnarray*}
		S_1 
		&\ll& \frac{q_m^{2+2\theta}}{q_{m+1}^2} \sum_{1\le l<q_{m+1}}\frac1{\|l\alpha\|^2}  \sum_{\substack{0<|u_1|, |u_2|\le q_m/4\\ |v_1|,|v_2| > q_m/4 \\ |u_1+v_1|= l=|u_2+v_2|}} |u_1v_1u_2v_2|^{-B}|u_1+v_1|^2\\
		&& + n^2 q_m^2 \sum_{l\ge q_{m+1}} \sum_{\substack{0<|u_1|, |u_2|\le q_m/4\\ |v_1|,|v_2| > q_m/4 \\ |u_1+v_1|= l=|u_2+v_2|}} |u_1v_1u_2v_2|^{-B}\\
		&=:& S_{1,1}+S_{1,2}, \mbox{ say}.
	\end{eqnarray*}
	For fixed $l$ and $u_1$, there are two solutions of $v_1$, and $v_2$ is governed in the same fashion. Note that $|v_1|^{-B}|v_2|^{-B}|u_1+v_1|^2\ll |v_1|^{2-B} |v_2|^{-B}\ll q_m^{2-B}q_m^{-B}$ (see \eqref{eqBt} for $B$).  Thus, 
	\begin{eqnarray*}
		S_{1,1} \ll \frac{q_m^{4+2\theta-2B}}{q_{m+1}^2} \sum_{1\le l<q_{m+1}} \frac1{\|l\alpha\|^2}  \sum_{0<|u_1|, |u_2|\le q_m/4} |u_1u_2|^{-B}\ll q_m^{4+2\theta-2B},
	\end{eqnarray*}
	by observing \eqref{p4} and the bound $O(1)$ for the double sum over $u_1,u_2$. Similarly, with $n^2q_m^2\le q_m^{4+2\theta}$, we have
	\begin{eqnarray*}
		S_{1,2} \ll q_m^{4+2\theta} \sum_{l\ge q_{m+1}} l^{-2B}  \sum_{0<|u_1|, |u_2|\le q_m/4} |u_1u_2|^{-B}\ll q_m^{4+2\theta} q_{m+1}^{1-2B}.
	\end{eqnarray*}
	Following the same line of treatment for $S_{1,1}$ and $S_{1,2}$, we obtain that
	\begin{eqnarray*}
		S_2   &\ll& \frac{q_m^{2+2\theta}}{q_{m+1}^2} \sum_{q_m/4< |v_1|<q_{m+1}}\frac1{\|v_1\alpha\|^2}  \sum_{\substack{0<|u_1|, |u_2|\le q_m/4\\ |v_2| > q_m/4 \\ u_1+v_1=u_2+v_2}} |u_1v_1u_2v_2|^{-B}|v_1|^2\\
		&& + n^2 q_m^2 \sum_{|v_1|\ge q_{m+1}} \sum_{\substack{0<|u_1|, |u_2|\le q_m/4\\ |v_2| > q_m/4 \\ u_1+v_1=u_2+v_2}} |u_1v_1u_2v_2|^{-B} .
	\end{eqnarray*}
	When $v_1,u_1,u_2$ are fixed, the value of $v_2$ is determined. Thus, the inner sum 
	$$\sum_{\substack{0<|u_1|, |u_2|\le q_m/4\\ |v_2| > q_m/4 \\ u_1+v_1=u_2+v_2}} |u_1v_1u_2v_2|^{-B}|v_1|^2\ll |v_1|^{2-B} q_m^{-B} \sum_{0<|u_1|, |u_2|\le  q_m/4} |u_1u_2|^{-B}\ll q_m^{2-2B}.
	$$
	Therefore, 
	\begin{eqnarray*}
		S_2     
		&\ll &  q_m^{4+2\theta-2B} + q_m^{4+2\theta} q_{m+1}^{1-2B}.
	\end{eqnarray*}
	Altogether we get, by using $q_m^{2+2\theta} \le  q_{m+1}$,
	\begin{eqnarray*}
		I^{\le ,>} \ll q_m^{4+2\theta-2B} + q_m^{4+2\theta} q_{m+1}^{1-2B}\ll q_m^{-2\theta}.
	\end{eqnarray*}
	
	\subsubsection{$|u|> q_m/4$ and $|v|\le q_m/4$} Let 
	\begin{eqnarray}\label{g}
		g(u,v) := \frac1{1-e(v\alpha)} \Big\{\frac{1-e(n(u+v)\alpha)}{1-e((u+v)\alpha)} e(v\alpha) -\frac{1-e(nu\alpha)}{1-e(u\alpha)} e(nv\alpha)  \Big\}.
	\end{eqnarray}
	With \eqref{snt} and \eqref{w2}, we have
	\begin{eqnarray*}
		I^{>,\le} 
		&=&  \sum_{\substack{|u_1|, |u_2|> q_m/4\\ 0<|v_1|,|v_2| \le  q_m/4 \\ u_1+v_1= u_2+v_2}} \widehat{\varphi}(u_1)\widehat{\eta}(v_1)g(u_1,v_1)\overline{\widehat{\varphi}(u_2)\widehat{\eta}(v_2) g(u_2,v_2)}
	\end{eqnarray*}
	and hence deduce by relabelling $u_1$ by $v_1$ and $v_1$ by $u_1$ that
	\begin{eqnarray*}
		I^{>,\le} \ll S_1 +S_2, 
	\end{eqnarray*}
	where $S_1$ and $S_2$ are defined as in \eqref{S12}. 
	Consequently with $q_m^{2+2\theta} \le  q_{m+1}$,
	\begin{eqnarray*}
		I^{>,\le} \ll q_m^{4+2\theta-2B} + q_m^{4+2\theta} q_{m+1}^{1-2B} \ll q_m^{-2\theta}.
	\end{eqnarray*}
	
	\subsubsection{$0<|u|\le q_m/4$ and $0<|v|\le q_m/4$}
	In the same vein as the evaluation for \eqref{I1}, we obtain 
	\begin{eqnarray*}
		I^{\le ,\le} 
		&\ll &  
		\sum_{\substack{0<|u_1|, |u_2|\le  q_m/4\\ 0<|v_1|,|v_2| \le  q_m/4 \\ u_1+v_1= u_2+v_2}} |u_1v_1u_2v_2|^{-B} \frac{|1-e(nv_1\alpha)|^2}{ \|u_1\alpha\| \|u_2\alpha\| \|v_1\alpha\|^2} \\
		&& +  \sum_{\substack{0<|u_1|, |u_2|\le  q_m/4\\ 0<|v_1|,|v_2| \le  q_m/4 \\ u_1+v_1= u_2+v_2}} |u_1v_1u_2v_2|^{-B} \frac{|1-e(n(u_1+v_1)\alpha)|^2}{\|u_1\alpha\| \|u_2\alpha\| \|(u_1+v_1)\alpha\|^2}. 
	\end{eqnarray*}
	As before, using $\|u\alpha\|\gg q_m^{-1}$ for $0<|u|<q_m$ and $1-e(n(u+v)\alpha) \ll \max(|u|,|v|)q_m^{\theta}/q_{m+1}$, we have
	\begin{eqnarray*}
		I^{\le ,\le} 
		&\ll &  \frac{q_m^{4+2\theta}}{q_{m+1}^2}
		\sum_{\substack{0<|u_1|, |u_2|\le  q_m/4\\ 0<|v_1|\le |u_1|,\,|v_2| \le  q_m/4 \\ u_1+v_1= u_2+v_2}} |u_1|^{2-B}|v_1u_2v_2|^{-B}\ll \frac{q_m^{4+2\theta}}{q_{m+1}^2}\ll q_m^{-2\theta}.
	\end{eqnarray*}
	This completes the task \eqref{lemt} for Case (i).

	{\bf Case (ii):} $\# \cQ'<\infty$. We have $q_m^{2+2\theta} > q_{m+1}$ for all $m\ge  m_0$, for some $m_0$. Therefore,  for all $l\ge 0$,
	\begin{align*}
		q_{l+1}\ll_{m_0} q_l^{2+2\theta}.
	\end{align*}
	Consider  $q_m\in \cQ'' =\{q_l\in \cQ: l\ge m_0\}$. 
	
	\subsection{Case (ii): Evaluation of $\Sigma_{n,0}$}\label{1c} 
	By \eqref{sn0} and \eqref{0b}, we have $\Sigma_{n,0}= \Sigma_{n,0}^{<} +O(q_m^{-\theta})$ and then we express
	\begin{eqnarray*}
		\Sigma_{n,0}^{<} = \Sigma_{n,0}^{<,1} - n\Sigma_{q_m,0}^{<,2},
	\end{eqnarray*}
	where
	\begin{eqnarray*}
		\Sigma_{n,0}^{<,1} = \sum_{0< |u|< q_m} \frac{\widehat{\varphi}(u)\widehat{\eta}(-u)}{1-e(u\alpha)}  \frac{1-e(-nu\alpha)}{1-e(-u\alpha)} 
		\quad \mbox{ and } \quad
		\Sigma_{q_m,0}^{<,2}:= \sum_{0< |u|< q_m} \frac{\widehat{\varphi}(u)\widehat{\eta}(-u)}{1-e(u\alpha)} .
	\end{eqnarray*}
	We treat $ \Sigma_{n,0}^{<,1}$ with \eqref{eqB} and \eqref{p4} as follows:
	\begin{eqnarray*}
		\Sigma_{n,0}^{<,1} 
		&=& \sum_{0< |u|< q_m} \frac{\widehat{\varphi}(u)\widehat{\eta}(-u)}{1-e(u\alpha)}  \frac{1-e(-nu\alpha)}{1-e(-u\alpha)}\ll \frac{q_m^\theta}{q_{m+1}}\sum_{0< u< q_{m-1}} u^{1-2B}\|u\alpha\|^{-2}
		\\
		&\ll& \frac{q_m^{\theta}}{ q_{m+1}} \sum_{0\le l<m} q_l^{1-2B}\sum_{q_l\le u < q_{l+1}} \|u\alpha\|^{-2}
		\ll \frac{q_m^{\theta}}{ q_{m+1}} \sum_{0\le l<m} q_l^{1-2B} q_{l+1}^2.
	\end{eqnarray*}
	We split $q_{l+1}^2= q_{l+1}^{1-2\theta} q_{l+1}^{1+2\theta}$ and apply $q_{l+1}\le q_m$ and $q_{l+1}\ll q_l^{2+2\theta}$. It follows that
	\begin{eqnarray*}
		\Sigma_{n,0}^{<,1} 
		&\ll&  q_m^{\theta-1} \cdot q_m^{1-2\theta}\sum_{0\le l<m} q_l^{1-2B+(2+2\theta)(1+2\theta)}  \\
		&= & q_m^{-\theta}\sum_{0\le l<m} q_l^{4+6\theta+4\theta^2 -2B-1}
		\ll q_m^{-\theta},
	\end{eqnarray*}
	since $4+6\theta+4\theta^2<4+10\theta<2B$. 
	Hence, we get the first equation in \eqref{lemt}: 
	\begin{eqnarray*}
		\Sigma_{n,0} = -n  \Sigma_{q_m,0}^{<,2} +O(q_m^{-\theta}).
	\end{eqnarray*}
	
	\subsection{Case (ii). Evaluation of $\Sigma_n(t)$}\label{1d} We go through again the evaluation of 
	\begin{eqnarray*}
		\int_{\mathbb{T}}|\Sigma_n(t) |^2\,\textup{d} t\ll \sum_{a,b\in \{>,\leq \}} I^{a,b}, 
	\end{eqnarray*}
	where $I^{a,b}:=\int_{\mathbb{T}}| \Sigma_n^{a,b}(t)|^2\,\textup{d} t$, as in \S\S~\ref{1b}.
	
	\subsubsection{Both $|u|$ and $|v|> q_m/4$} The estimate
	\begin{eqnarray*}
		I^{>,>} \ll q_m^{-1-4\theta}
	\end{eqnarray*}
	is still valid.

	\subsubsection{$0<|u|\le q_m/4$ and $|v|> q_m/4$} We have from \eqref{I1} and  \eqref{S12} that
	\begin{eqnarray*}
		I^{\le ,>}  \ll  n^2 \sum_{\substack{0<|u_1|, |u_2|\le q_m/4\\ |v_1|,|v_2| > q_m/4 \\ u_1+v_1= u_2+v_2}} \frac{|u_1v_1u_2v_2|^{-B}}{|1-e(u_1\alpha)||1-e(u_2\alpha)|} .
	\end{eqnarray*}
	Now we handle more carefully the contribution of $\|u_1\alpha\|$ and $\|u_2\alpha\|$  due to the denominator rather than giving the trivial bound $q_m^{-1}$. Note that $v_2$ is determined (with two possibilities) for given $u_1,u_2,v_1$ and $|v_2| \asymp |v_1|$. Thus, with \eqref{p4},
	\begin{eqnarray*}
		I^{\le ,>}  
		&\ll&  n^2  \sum_{\substack{0<|u_1|, |u_2|\le q_m/4\\ |v_1|,|v_2| > q_m/4 \\ u_1+v_1= u_2+v_2}} |u_1v_1u_2v_2|^{-B}\|u_1\alpha\|^{-1}\|u_2\alpha\|^{-1}\\
		&\ll& q_m^{2+2\theta} \Big(\sum_{0<|u|\le q_m/4} |u|^{-B} \|u\alpha\|^{-1}\Big)^2 \sum_{ |v_1| > q_m/4} |v_1|^{-2B} \\
		&\ll& q_m^{3+2\theta-2B} \Big(\sum_{0\le l< m} q_l^{-B} \sum_{q_l\le u< q_{l+1}}  \|u\alpha\|^{-1}\Big)^2\\
		&\ll& q_m^{3+2\theta-2B} \Big(\sum_{0\le l< m} q_l^{-B} q_{l+1}\log q_{l+1}\Big)^2.
	\end{eqnarray*}
	As $q_{l+1} =q_{l+1}^{\frac12(1-\theta)}q_{l+1}^{\frac12(1+\theta)} \ll q_l^{1-\theta^2} q_m^{\frac12(1+\theta)}< q_lq_m^{\frac12(1+\theta)}$ by $q_{l+1}\ll q_l^{2+2\theta}$ and $q_{l+1}\le q_m$, we get, from the last line,
	\begin{eqnarray*}
		I^{\le ,>} 
		&\ll& q_m^{3+2\theta-2B+ 1+\theta} \Big(\sum_{0\le l< m} q_l^{1-B} \log q_{l+1}\Big)^2.
	\end{eqnarray*}
	As $B>2$ by \eqref{eqBt}, we have $\sum_{q\ge 1} q^{1-B} \log q\ll_B 1$ and thus,
	\begin{eqnarray*}
		I^{\le ,>} \ll q_m^{4+3\theta-2B}\ll q_m^{-\theta}.
	\end{eqnarray*}
	
	\subsubsection{$|u|> q_m/4$ and $0<|v|\le  q_m/4$} We have from the result for $I^{\le ,>}$ that
	\begin{eqnarray*}
		I^{>,\le} \ll  q_m^{-\theta}.
	\end{eqnarray*}
	
	\subsubsection{$0<|u|\le q_m/4$ and $0<|v|\le q_m/4$} Let $g$ be defined as in \eqref{g}. 
	As $|a+b|^2\le 2|a|^2+2|b|^2$, we have 
	\begin{eqnarray*}
		I^{\le ,\le} 
		&=&  \int_{\mathbb{T}}  \bigg|\sum_{\substack{0\neq |u|,|v|\le q_m/4 \\ u+v\neq 0} } \widehat{\varphi}(u)\widehat{\eta}(v)  \sum_{j=1}^{n-1} \sum_{r=0}^{j-1}  e((ur+vj)\alpha) e((u+v)t)\bigg|^2\,\textup{d} t \\
		&\le& 8 \int_{\mathbb{T}}  \bigg|\sum_{0\neq |v|< |u|\le q_m/4 } \widehat{\varphi}(u)\widehat{\eta}(v)   g(u,v)  e((u+v)t) \bigg|^2\,\textup{d} t  
		\\
		&& \ + 2 \int_{\mathbb{T}}  \bigg|\sum_{\substack{0\neq|v|= |u|\le q_m/4 \\ u+v\neq 0} } \widehat{\varphi}(u)\widehat{\eta}(u) 
		g(u,u) e((u+v)t) \bigg|^2\,\textup{d} t .
	\end{eqnarray*}
	In view of the expression of $g(u,v)$ (in \eqref{g}), the first integral is
	\begin{eqnarray*}
		&\ll& 
		\int_{\mathbb{T}}  \bigg|\sum_{0\neq |v|< |u|\le q_m/4 } \widehat{\varphi}(u)\widehat{\eta}(v)  \frac{1-e(nu\alpha)}{(1-e(v\alpha))(1-e(u\alpha))} e(nv\alpha)  e((u+v)t)\bigg|^2\,\textup{d} t \\
		&& + 
		\int_{\mathbb{T}}  \bigg|\sum_{0\neq |v|< |u|\le q_m/4 } \widehat{\varphi}(u)\widehat{\eta}(v)  \frac{1-e(n(u+v)\alpha)}{(1-e(v\alpha))(1-e((u+v)\alpha))} e(v\alpha) e((u+v)t)\bigg|^2\,\textup{d} t \\
		&=:& I_{1,1} + I_{1,2},
	\end{eqnarray*}
	and similarly, the second integral is
	\begin{eqnarray*}
		&\ll& 
		\int_{\mathbb{T}}  \bigg|\sum_{\substack{0<|v|= |u|\le q_m/4\\ u+v\neq 0} } \widehat{\varphi}(u)\widehat{\eta}(v)  \frac{1-e(nv\alpha)}{(1-e(v\alpha))(1-e(u\alpha))}  e(nv\alpha)  e((u+v)t)\bigg|^2\,\textup{d} t \\
		&& + 
		\int_{\mathbb{T}}  \bigg|\sum_{\substack{0< |v|= |u|\le q_m/4\\ u+v\neq 0} } \widehat{\varphi}(u)\widehat{\eta}(v)  \frac{1-e(n(u+v)\alpha)}{(1-e( v\alpha))(1-e((u+v)\alpha))} e(v\alpha) 
		e((u+v)t)\bigg|^2\,\textup{d} t\\
		&=:& I_{2,1} + I_{2,2}. 
	\end{eqnarray*}
	Consequently, 
	\begin{eqnarray*}
		I^{\le ,\le} \ll (I_{1,1} + I_{2,1}) + (I_{1,2}+ I_{2,2}).
	\end{eqnarray*}
	We shall show that both brackets are $\ll q_m^{-\theta}$ in the following two subsections, which will then complete the proof of Lemma~\ref{lem31}. 
	
	\subsubsection*{\rm 6.4.4 (I): Treatment of $I_{1,1}+I_{2,1}$.} 
	To handle $I_{1,1}$, we convert the double sum over $u$ and $v$ into
	\begin{eqnarray*}
		\sum_{0\le a\le b< m} \sum_{\substack{q_a\le |v| <\min (q_{a+1},q_m/4)\\ q_b\le |u| <\min (q_{b+1},q_m/4)\\ |v|<|u|}}.
	\end{eqnarray*}
	We can treat $I_{2,1}$ in the same fashion. 
	Squaring out the integrand and integrating over $t$, we infer with $1-e(u\alpha) \asymp \|u\alpha\|$, $1-e(v\alpha) \asymp \|v\alpha\|$ and 
	$1-e(nu\alpha) \ll |u| q_m^\theta/q_{m+1}$ that
	\begin{eqnarray*}
		I_{1,1} +I_{2,1} \ll \frac{q_m^{2\theta}}{q_{m+1}^2} \sum_{\substack{0\le a\le b< m\\ 0\le c\le d< m}} 
		\sum_{\substack{q_a\le |v| < q_{a+1}\\ q_b\le |u| < q_{b+1}\\ q_c\le |v'| < q_{c+1}\\ q_d\le |u'| < q_{d+1}\\ |v|\le |u|, |v'|\le |u'|\\ 0\neq u+v= u'+v'}} |uu'vv'|^{-B} |uu'| \|u\alpha\|^{-1}\|u'\alpha\|^{-1}\|v\alpha\|^{-1}\|v'\alpha\|^{-1}.
	\end{eqnarray*}
	By symmetry, we may assume $b\le d$ and make several uses of \eqref{p4} and \eqref{p2}, which give 
	\begin{itemize}
		\item $\sum_{0<|u|< q_k} \|u\alpha\|^{-1} \ll q_k \log (q_k+1)\ll q_k \log q_m$ for $k < m$,
		\item $\|q\alpha\|^{-1} \le \|q_k\alpha\|^{-1}\ll q_{k+1}$ for $0<q<q_{k+1}$.
	\end{itemize} 
	To analyze the multiple sums, we separate into the following cases:
	\begin{enumerate}[leftmargin=7mm]
		\item[(i)] $b<d$: The multiple sum is evaluated as follows: 
		\begin{eqnarray*}
			&& \sum_{\substack{0\le a\le b< d< m\\ 0\le c\le d}} 
			\sum_{\substack{q_a\le |v| < q_{a+1}\\ q_b\le |u| < q_{b+1}\\ q_c\le |v'| < q_{c+1}\\ q_d\le |u'| < q_{d+1}\\ |v|\le |u|, |v'|\le |u'|\\ 0\neq u+v= u'+v'}} |uu'vv'|^{-B} |uu'| \|u\alpha\|^{-1}\|u'\alpha\|^{-1}\|v\alpha\|^{-1}\|v'\alpha\|^{-1} \\
			&\ll& 
			\sum_{\substack{0\le a\le b <d< m\\ 0\le c\le d}} 
			\sum_{\substack{q_a\le |v| < q_{a+1}\\ q_b\le |u| < q_{b+1}\\ q_c\le |v'| < q_{c+1}\\ q_d\le |u'| < q_{d+1}\\ |v|\le |u|, |v'|\le |u'|\\ 0\neq u+v= u'+v'}} 
			|uvv'|^{-B} |u| \|u\alpha\|^{-1}\|v\alpha\|^{-1}\|v'\alpha\|^{-1} q_{a+1}^{-1/2}q_{b+1}^{-1}q_{c+1}^{-1/2}\\
			&& \times q_d^{3/2+10\theta} q_{d+1}^{1/2-4\theta} |u'|^{1-B}\|u'\alpha\|^{-1}
		\end{eqnarray*}
		as $q_{a+1}, q_{b+1} \le q_d$, $q_{c+1} \le q_{d+1}$ and $q_{d+1}^{4\theta} \ll q_d^{10\theta}$ (by $q_{l+1}\ll q_l^{2+2\theta}$). 
		
		Due to $u+v= u'+v'$, the value of $u'$ (and hence $q_d$) is determined for given $u,v,v'$. As $|u'|^{1-B} \|u'\alpha\|^{-1}\ll q_d^{1-B} q_{d+1}$ and   
		$$
		q_d^{3/2+10\theta} q_{d+1}^{1/2-4\theta} q_d^{1-B} q_{d+1}= q_d^{5/2+10\theta-B}q_{d+1}^{3/2-4\theta} \le q_d^{1/2} q_{d+1}^{3/2-4\theta} \le q_m^{1/2} q_m^{3/2-4\theta}$$ 
		for $B> 2+10\theta$ (see \eqref{eqBt}) and $d<m$, we see that the multiple sum is 
		\begin{eqnarray*} 
			&\ll& 
			q_m^{2-4\theta}
			\sum_{\substack{0\le a\le b< m\\ 0\le c< m}} 
			\sum_{\substack{q_a\le |v| < q_{a+1}\\ q_b\le |u| < q_{b+1}\\ q_c\le |v'| < q_{c+1}}} |uvv'|^{-B} |u| \|u\alpha\|^{-1}\|v\alpha\|^{-1}\|v'\alpha\|^{-1} q_{a+1}^{-1/2}q_{b+1}^{-1}q_{c+1}^{-1/2}\\
			&\ll& 
			q_m^{2-4\theta}
			\sum_{\substack{0\le a\le b< m\\ 0\le c< m}} q_b^{1-B} q_a^{-B}q_c^{-B}
			\sum_{\substack{q_a\le |v| < q_{a+1}\\ q_b\le |u| < q_{b+1}\\ q_c\le |v'| < q_{c+1}}}  \|u\alpha\|^{-1}\|v\alpha\|^{-1}\|v'\alpha\|^{-1} q_{a+1}^{-1/2}q_{b+1}^{-1}q_{c+1}^{-1/2}\\
			&\ll& 
			q_m^{2-4\theta}\log^3 q_m
			\sum_{\substack{0\le a\le b< m\\ 0\le c< m}} q_b^{1-B} q_a^{-B}q_{a+1}^{1/2}q_c^{-B}q_{c+1}^{1/2} \\
			&\ll& q_m^{2-4\theta}\log^3 q_m
			\sum_{\substack{0\le a\le b< m\\ 0\le c< m}} q_b^{1-B} q_a^{1+\theta-B}q_c^{1+\theta-B}
			\ll     q_m^{2-4\theta}\log^3 q_m,
		\end{eqnarray*}
		by $q_{l+1}\ll q_l^{2+2\theta}$ and $1+\theta -B<-1$.
		
		\item[(ii)] $b=d$: We split the multiple sum into three pieces:
		\begin{eqnarray*}
			&& \sum_{\substack{0\le a\le b< m\\ 0\le c\le b}} 
			\sum_{\substack{q_a\le |v| < q_{a+1}\\ q_b\le |u| < q_{b+1}\\ q_c\le |v'| < q_{c+1}\\ q_b\le |u'| < q_{b+1}\\ |v|\le |u|, |v'|\le |u'|\\ 0\neq u+v= u'+v'}} |uu'vv'|^{-B} |uu'| \|u\alpha\|^{-1}\|u'\alpha\|^{-1}\|v\alpha\|^{-1}\|v'\alpha\|^{-1}\\
			&=& \sum_{b< m}\sum_{\substack{a,c< b}} + \sum_{a=b< m}\sum_{\substack{c\le b}} + \sum_{c=b< m}\sum_{\substack{a< b}} .
		\end{eqnarray*}
		For the first two pieces, we vary $u,v,v'$ and determine $u'$ by $u+v=u'+v'$. Thus,
		\begin{eqnarray*}
			\sum_{b< m}
			\sum_{\substack{a,c< b}} \cdots &\ll& \sum_{b< m}
			\sum_{\substack{a,c< b}} 
			\sum_{\substack{q_a\le |v| < q_{a+1}\\ q_b\le |u| < q_{b+1}\\ q_c\le |v'| < q_{c+1}}} |uvv'|^{-B} |u| \|u\alpha\|^{-1}\|v\alpha\|^{-1}\|v'\alpha\|^{-1} q_b^{1-B} q_{b+1}\\
			&\ll& \sum_{b< m}
			\sum_{\substack{a,c< b}} q_b^{1-B} q_{b+1} q_a^{-B} q_c^{-B} q_b^{1-B}
			\sum_{\substack{q_a\le |v| < q_{a+1}\\ q_b\le |u| < q_{b+1}\\ q_c\le |v'| < q_{c+1}}}  \|u\alpha\|^{-1}\|v\alpha\|^{-1}\|v'\alpha\|^{-1} \\
			&\ll& \log^3 q_m\sum_{b< m}
			\sum_{\substack{a,c< b}} q_b^{1-B} q_{b+1}  q_a^{-B} q_{a+1} q_c^{-B} q_{c+1} q_b^{1-B} q_{b+1}
			\\
			&\ll& q_m^\theta\sum_{b< m}
			q_b^{2-2B} q_{b+1}^2 \bigg(\sum_{\substack{a< b}} q_a^{-B} q_{a+1}\bigg)^2
			\\
			&\ll& q_m^\theta \sum_{b< m} q_b^{3-2B} q_{b+1}^2  \ll  q_m^\theta\sum_{b< m} q_b^{3+10\theta-2B} q_{m}^{2-4\theta} \\
			&\ll& q_m^{2-3\theta},
		\end{eqnarray*}
		because  $\sum_{\substack{a< b}} q_a^{-B} q_{a+1}\le q_b^{1/2}\sum_{\substack{a< b}} q_a^{-B} q_{a+1}^{1/2} \ll q_b^{1/2}\sum_{\substack{a< b}} q_a^{-B+(2+2\theta)/2}\ll q_b^{1/2}$,
		and 
		\begin{eqnarray*}
			\sum_{a=b< m}
			\sum_{\substack{c\le  b}} \cdots
			&\ll& \sum_{b< m}
			\sum_{\substack{c\le b}} 
			\sum_{\substack{q_b\le |v| < q_{b+1}\\ q_b\le |u| < q_{b+1}\\ q_c\le |v'| < q_{c+1}}} |uvv'|^{-B} |u| \|u\alpha\|^{-1}\|v\alpha\|^{-1}\|v'\alpha\|^{-1} q_b^{1-B} q_{b+1}\\
			&\ll& \log^3 q_m\sum_{b< m}
			\sum_{\substack{c\le b}} q_b^{1-B} q_{b+1}  q_b^{-B} q_{b+1} q_c^{-B} q_{c+1} q_b^{1-B} q_{b+1}
			\\
			&\ll&  q_m^\theta \sum_{b< m}
			q_b^{2-3B} q_{b+1}^3 \sum_{\substack{c\le b}}  q_c^{1+\theta -B} q_{c+1}^{1/2}
			\\
			&\ll& q_m^\theta \sum_{b< m} q_b^{5+13\theta-3B} q_{b+1}^{2-4\theta} \sum_{\substack{c \le b}}  q_c^{1+\theta -B} \ll  q_m^{2-3\theta},
		\end{eqnarray*}
		because $q_{c+1}^{1/2} \le q_c^{1+\theta}$, $q_{c+1}^{1/2} \le q_{b+1}^{1/2} \le q_b^{1+\theta}$ and $q_{b+1}^{1+4\theta} \ll q_b^{(2+2\theta)(1+4\theta)}\le q_b^{2+12\theta}$.
		
		For the third piece, we vary $v,u',v'$ and determine $u$ by $u+v=u'+v'$. Switching the roles of $u$ and $u'$ and $a$ and $c$, this case is included in the evaluation of the second piece.
	\end{enumerate}
	We conclude that 
	\begin{eqnarray*}
		I_{1,1} +I_{2,1} \ll \frac{q_m^{2\theta}}{q_{m+1}^2} \big(q_m^{2-4\theta}\log^3 q_m + q_m^{2-3\theta}\big) \ll q_m^{-\theta}.
	\end{eqnarray*}
	\subsubsection*{\rm \rm 6.4.4 (II): Treatment of $I_{1,2}+I_{2,2}$.}  
	Next we deal with $I_{1,2}$ and $I_{2,2}$:
	\begin{eqnarray*}
		I_{1,2}
		&\ll&  
		\int_{\mathbb{T}}  \bigg|\sum_{0\neq |v|< |u|\le q_m/4 } \widehat{\varphi}(u)\widehat{\eta}(v)  \frac{1-e(n(u+v)\alpha)}{(1-e(v\alpha))(1-e((u+v)\alpha))} 
		e(v\alpha) e((u+v)t)\bigg|^2\,\textup{d} t\\
		&\ll&  
		\int_{\mathbb{T}}  \bigg|\sum_{\substack{0\neq |v|< |u|\le q_m/4 \\ uv>0} }\cdots\bigg|^2\,\textup{d} t
		+ 
		\int_{\mathbb{T}}  \bigg|\sum_{\substack{0\neq |v|< |u|\le q_m/4 \\ uv<0} }\cdots\bigg|^2\,\textup{d} t.
	\end{eqnarray*}
	Treat $I_{2,2}$ similarly. Applying $1-e(n(u+v)\alpha)\ll q_m^\theta (|u+v|)/q_{m+1}$, we deduce that 
	\begin{eqnarray*}
		I_{1,2} + I_{2,2}\ll \frac{q_m^{2\theta}}{q_{m+1}^2} \big(J_+ + J_-\big),
	\end{eqnarray*}
	where, after replacing $|u|=\pm u$ with $u\in \N$, etc., 
	\begin{align*}
		J_+ &:=  \sum_{\substack{0<v \le  u\le q_m/4\\ 0<v'  \le  u'\le q_m/4\\ u+v=u'+v'}}
		(uu'vv')^{-B} (u+v)(u'+v') \|(u+v)\alpha\|^{-1}\|(u'+v')\alpha\|^{-1}\|v\alpha\|^{-1}\|v'\alpha\|^{-1},\\    
		J_- &:= \sum_{\substack{0<v< u\le q_m/4\\ 0<v'< u'\le q_m/4\\ u-v=u'-v'}}
		(uu'vv')^{-B} (u-v)(u'-v') \|(u-v)\alpha\|^{-1}\|(u'-v')\alpha\|^{-1}\|v\alpha\|^{-1}\|v'\alpha\|^{-1}.
	\end{align*}
	(Note that $I_{2,2}$ does not contribute to $J_-$.)
	Our proof will be complete by showing 
	\begin{eqnarray}\label{Jpm}
		J_++ J_- \ll q_m^{2-3\theta}.
	\end{eqnarray}
	To its end we let
	\begin{eqnarray*}
		K:= \sum_{\substack{0<w< q_m\\ 0<v\le v'\le q_m/4}} \|w\alpha\|^{-2} w^{2-2B} 
		(vv')^{-B} \|v\alpha\|^{-1}\|v'\alpha\|^{-1}.
	\end{eqnarray*}
	
	Let $w=u+v \  (=u'+v')\asymp u$ when $0<v  \le u$  ($0<v' \le u'$). After rearranging, we see that
	\begin{eqnarray*} 
		J_+
		&\ll&  \sum_{0<w\le q_m/2} \|w\alpha\|^{-2} w^{2-2B} 
		\sum_{0<v,v' \le q_m/4} (vv')^{-B} \|v\alpha\|^{-1}\|v'\alpha\|^{-1}\ll K. 
	\end{eqnarray*}  
	By symmetry, we may assume $v\le v'$ in $J_-$ and write $w=u-v \ (=u'-v')$. Then we have
	\begin{eqnarray*}
		J_-
		&=& \sum_{\substack{0<v < u\le q_m/4\\ 0<v' < u'\le q_m/4\\ u-v=u'-v'}}
		|uu'vv'|^{-B} |u-v||u'-v'| \|(u-v)\alpha\|^{-1}\|(u'-v')\alpha\|^{-1}\|v\alpha\|^{-1}\|v'\alpha\|^{-1}\\
		&\ll&  \sum_{\substack{0<w< q_m\\ 0<v<v'\le q_m/4}} \|w\alpha\|^{-2} w^2 
		((w+v)(w+v')vv')^{-B} \|v\alpha\|^{-1}\|v'\alpha\|^{-1}  \\
		&\ll&  \sum_{\substack{0<w< q_m\\ 0<v<v'\le q_m/4}} \|w\alpha\|^{-2} w^{2-2B} 
		(vv')^{-B} \|v\alpha\|^{-1}\|v'\alpha\|^{-1}  \\
		&\ll& K.
	\end{eqnarray*}
	It remains to show $K\ll q_m^{2-3\theta}$. We have
	\begin{eqnarray*}
		K   &\ll&  \sum_{\substack{a\le b<m\\ c<m}} \sum_{\substack{q_a\le v< q_{a+1}\\ q_b\le v'< q_{b+1} \\ q_c\le w< q_{c+1}}} \|w\alpha\|^{-2} w^{2-2B} (vv')^{-B} \|v\alpha\|^{-1}\|v'\alpha\|^{-1}  \\
		&\ll&  \sum_{\substack{a\le b<m\\ c<m}} q_c^{2-2B} (q_aq_b)^{-B} 
		\sum_{\substack{q_a\le v< q_{a+1}\\ q_b\le v'< q_{b+1} \\ q_c\le w< q_{c+1}}} \|w\alpha\|^{-2} \|v\alpha\|^{-1}\|v'\alpha\|^{-1}  \\
		&\ll&  \log^2 q_m\sum_{\substack{a\le b<m\\ c<m}} q_c^{2-2B} q_{c+1}^2 q_a^{-B} q_{a+1}q_b^{-B} q_{b+1} \\   
		&\ll& q_m^\theta (K_1+K_2),
	\end{eqnarray*}
	where
	\begin{eqnarray*}
		K_1 & =&  \sum_{\substack{a\le b<m\\ c\le b}}q_c^{2-2B} q_{c+1}^2 q_a^{-B} q_{a+1}q_b^{-B} q_{b+1}, \\
		K_2 &=& \sum_{a\le b<c<m}q_c^{2-2B} q_{c+1}^2 q_a^{-B} q_{a+1}q_b^{-B} q_{b+1}.
	\end{eqnarray*}
	Invoking $q_{l+1} \ll q_l^{2+2\theta} <q_l^B$ and noting from \eqref{eqBt} that  $1+\theta -B  < 1+4\theta -B <-1$, $3+\theta-2B<-1$ and $9/2+12\theta -3B<-1$,
	we see that
	\begin{eqnarray*}
		K_1&=&\sum_{\substack{a\le b<m\\ c\le b}}q_c^{2-2B} q_{c+1}^2 q_a^{-B} q_{a+1}q_b^{-B} q_{b+1} \\    
		&\ll & \sum_{b<m}q_b^{-B} q_{b+1} \bigg(\sum_{a<b} q_a^{1+\theta-B} q_{a+1}^{1/2} + q_b^{-B} q_{b+1}\bigg)\bigg(\sum_{c<b} q_c^{3+\theta-2B} q_{c+1}^{3/2} + q_b^{2-2B} q_{b+1}^2\bigg)\\
		&\ll& \sum_{b<m}q_b^{-B} q_{b+1} \big(q_b^{1/2} + q_b^{-B}q_{b+1}\big) \big(q_b^{3/2} + q_b^{2-2B}q_{b+1}^2\big)\\
		&\ll&\sum_{b<m}q_b^{-B} q_{b+1} q_b^{1/2}\big(q_b^{3/2}+q_b^{2-2B}q_{b+1}^2\big)
		\ll \sum_{b<m}q_b^{2-B} q_{b+1} + \sum_{b<m}q_b^{5/2-3B} q_{b+1}^3 \\
		&\ll& q_m\sum_{b<m}q_b^{1+4\theta-B}q_b^{1-4\theta}  + q_m^{2-4\theta}\sum_{b<m}q_b^{5/2-3B} q_{b+1}^{1+4\theta} \\
		&\ll& q_m^{2-4\theta} \sum_{b<m}q_b^{1+4\theta-B} + q_m^{2-4\theta}\sum_{b<m}q_b^{5/2-3B} q_b^{2+12\theta} \\
		&\ll& q_m^{2-4\theta}. 
	\end{eqnarray*}
	Next,  recalling  $1+\theta -B<-1$ and noting $3+10\theta -2B<-1$, we have
	\begin{eqnarray*}
		K_2&=&\sum_{a\le b<c<m}q_c^{2-2B} q_{c+1}^2 q_a^{-B} q_{a+1}q_b^{-B} q_{b+1}\\     
		&\ll & \sum_{c<m}q_c^{2-2B} q_{c+1}^2 \bigg(\sum_{a<c} q_a^{-B} q_{a+1}\bigg)^2
		\ll  \sum_{c<m}q_c^{2-2B} q_{c+1}^2 \bigg(\sum_{a<c} q_a^{1+\theta-B} q_{a+1}^{1/2}\bigg)^2\\
		&\ll & \sum_{c<m}q_c^{3-2B} q_{c+1}^2 \bigg(\sum_{a<c} q_a^{1+\theta-B} \bigg)^2 \ll  \sum_{c<m}q_c^{3-2B} q_{c+1}^2 \\
		&\ll & \sum_{c<m}q_c^{3+10\theta-2B} q_{c+1}^{2-4\theta} \ll  q_m^{2-4\theta}\sum_{c<m}q_c^{3+10\theta-2B} \\
		&\ll& q_m^{2-4\theta}. 
	\end{eqnarray*}
	We conclude that $K\ll q_m^{2-3\theta}$ and hence  $J_+, J_- \ll q_m^{2-3\theta}$, i.e. \eqref{Jpm} holds. 
	Our proof of Lemma~\ref{lem31} is complete.
	
	\section{Proof of Theorem~\ref{thm}}
	
	For irrational $\alpha$, the M\"obius disjointness follows from Theorem~\ref{intd} and Proposition~\ref{prop2.1}. It remains to consider the case $\alpha\in \Q\cap [0,1)$. 
	This case has been handled by Ma and Wu in \cite[Theorem 1.1]{MW} under the assumption that $\varphi, \eta, \psi$ are $C^\infty$. The smoothness condition can be relaxed, for example, He and Wang \cite{HW} considered  $C^{1+\varepsilon}$ functions but also assumed $\varphi= \eta$. 
	
	Below we provide a brief verification following \cite{MW} with a little modification. By \cite[Lemma 2.1]{MW}, which is \cite[Proposition 1]{HLW}, it suffices to show 
	\begin{eqnarray}\label{smo}
		\sum_{n\le N} \mu(n) f(S_\alpha^n (t_{0}, \overline{g_0})) = o(N)
	\end{eqnarray}
	for $f\in \mathcal{A}\cup \mathcal{B}$. The families $\mathcal{A}$ and $\mathcal{B}$ are defined as in Section~2 of \cite{HLW} or \cite{MW}, from which we see that $\mathcal{A}$ consists of scalar multiples of functions in $\mathcal{A}'$ and a function $f\in \mathcal{A}'$ is of the form
	\begin{eqnarray*}
		&& f(t,\overline{g}) = e(m_0 t+m_1 x+m_2 y+m z) \sum_{b \in r+\Z} e^{-\pi (y + b)^2-\pi \delta (y+b)} e(b mx) 
		\ \mbox{ if } \
		g =\begin{pmatrix} \begin{smallmatrix} 1 & y & z \\ & 1 & x\\ & & 1 \end{smallmatrix}\end{pmatrix},
	\end{eqnarray*}
	where $m_0, m_1,m_2,  m\in \Z$, $r\in \Q$ and $\delta =0$ or $1 \pm i$. Moreover, by the definition of $\mathcal{B}$ (or from the proof of \cite[Proposition 2]{HLW}), the family $\mathcal{B}$ can be identified with $C(\mathbb{T})\otimes C(\mathbb{T}^2)$. As characters are linearly dense in ${C}(\mathbb{T}^k)$, $k=1,2$, we may reduce $\mathcal{B}$ to its subfamily $\mathcal{B}'$ which consists of $f\in \mathcal{B}$ of the form $f(t,\overline{g}) = e(m_0 t) e(m_1 x) e(m_2 y)$ with $m_0,m_1,m_2\in \Z$. 
	
	By \eqref{Sn}, it follows that for $  g_0 =\begin{pmatrix} \begin{smallmatrix} 1 & y_0 & z_0 \\ & 1 & x_0\\ & & 1 \end{smallmatrix}\end{pmatrix}$,
	\begin{eqnarray*}
		S_\alpha^n (t_{0}, \overline{g_0}) = 
		\left(t_{0} + n\alpha, \Gamma \begin{pmatrix}\begin{smallmatrix}  1 \ & y_0+\Phi_n(t) \,  & z_0 + y_{0} \xi_n(t)+ \Omega_n(t) + H_n(t) \\ & 1 & x_0+\xi_n(t)  \\ & & 1 \end{smallmatrix} \end{pmatrix}\right). 
	\end{eqnarray*}
	For any rational $\alpha = a/q\in [0,1)$ (with $(a,q)=1$) and function $h$ of period 1, we have
	$$
	C_{h}(n,t) :=\sum_{r=0}^{n-1} h(t+r\alpha) = q^{-1}C_{h}(q,t) \cdot (n-b) + C_{h}(b,t)
	$$
	if  $n= b'q+b$ where $b' \geq 0$ and $0\le b< q$ (or see \cite[p.305]{MW}). That $C_{h}(n,t)$ is a (linear) polynomial in $n$ is the key upshot. We shall apply it to $\xi_n(t)= C_\eta(n,t) $, $\Phi_n(t)= C_\varphi(n,t)$ and $\Omega_n(t)= C_\psi(n,t)$. The case of $H_n(t)$ is more complicated and we have
	\begin{eqnarray*}
		&& H_n(t)=\sum_{j=1}^{n-1} \sum_{r=0}^{j-1} \varphi (t+r\alpha) \eta(t+j\alpha) \\
		&=& \bigg(\sum_{s=0}^{b'-1} \sum_{j=sq+1}^{sq+q} + \sum_{j= b'q+1}^{b'q+b-1}\bigg)    \eta(t+j\alpha)\sum_{r=0}^{j-1} \varphi (t+r\alpha)
		\\
		&=& \sum_{s=0}^{b'-1} \sum_{j=1}^{q} \eta(t+j\alpha) \sum_{r=0}^{sq+j-1} \varphi (t+r\alpha)
		+ \sum_{j= 1}^{b-1}\eta(t+j\alpha) \sum_{r=0}^{b'q+j-1} \varphi (t+r\alpha)\\
		&=&
		\sum_{s=0}^{b'-1}  \sum_{j= 1}^{q}\eta(t+j\alpha) \Big( s C_\varphi(q,t) + C_\varphi(j,t)\Big) +\sum_{j=1}^{b-1}\eta(t+j\alpha) \Big( b' C_\varphi(q,t) + C_\varphi(j,t)\Big)\\
		&=&
		C_\eta(q,t+\alpha) C_\varphi(q,t) \sum_{s=0}^{b'-1}  s  \\
		& & + b' \sum_{j=1}^q \eta(t+j\alpha) C_\varphi(j,t) +b'\Big(C_\eta(b,t) -\eta(t)\Big) C_\varphi(q,t) + \sum_{j=1}^{b-1}\eta(t+j\alpha)  C_\varphi(j,t)\\
		&=&
		\frac12 q^{-2} C_\eta(q,t+\alpha) C_\varphi(q,t) \cdot \big(n^2-(q+2b)n+(q+b)b\big)  \\
		& & +q^{-1}\Big(\sum_{j=1}^{q-1} \eta(t+j\alpha) C_\varphi(j,t) +C_\eta(b,t) C_\varphi(q,t)\Big) \cdot (n-b)+ \sum_{j=1}^{b-1}\eta(t+j\alpha)  C_\varphi(j,t).
	\end{eqnarray*}
	Thus, $H_n(t)$ is a quadratic polynomial in $n$. Consequently,  for $f\in \mathcal{A}'$, we may express
	\begin{eqnarray*}
		&& f(S_\alpha^n (t_{0}, \overline{g_0})) 
		= e(P_2(n)) \sum_{b\in r+\Z} e^{-\pi(b+An+B)^2 - \pi \delta(b+An+B)} e(b(Cn+D))  \\
		&=& e(\{ P_2(n)\}-\lfloor An+B\rfloor (Cn+D)) \sum_{b\in r+\Z} e^{-\pi(b+\{An+B\})^2 - \pi \delta(b+\{An+B\})} e(\{b(Cn+D)\}) 
	\end{eqnarray*}
	and for $f\in \mathcal{B}'$, $ f(S_\alpha^n (t_0, \overline{g_0})) = e(\{P_1(n)\})$,  
	where $A$, $B$, $C$ and $D$ are real numbers independent of $n$ and $b$ while $P_j(n)$ are real polynomials in $n$ of degree $j$, for $j=1,2$.  
	Noticing that the case $f\in \mathcal{B}'$ is simpler than the case $f\in \mathcal{A}'$, we will only deal with the case  $f\in\mathcal{A}'$ below.

	Clearly for any arbitrarily small $\varepsilon\in (0,1)$, the tail part of $|b-r|\ge \sqrt{|\log \varepsilon|}$ is 
	$$
	\ll \sum_{|b| \ge \sqrt{|\log \varepsilon|}} e^{-\pi b^2/2} \ll \varepsilon.
	$$ 
	Splitting the sum  and applying the formula $\int_{-\infty}^\infty e^{-\pi w^2} e(k w)\,dw = e^{-\pi k^2}$, we obtain  
	\begin{eqnarray*}
		f(S_\alpha^n (t_0, \overline{g_0})) 
		=  e^{\delta^2\pi/4} \int_{-\infty}^\infty 
		e^{-\pi w^2} \sum_{|b|< \sqrt{|\log \varepsilon|}} e\big( (b+Q_1(n)+\tfrac{\delta}2)w + Q_2(n))\,dw   +O(\varepsilon),
	\end{eqnarray*}
	where  $Q_1(n)= \{An+B\}$ and $Q_2(n)=\{ P_2(n)\}-\lfloor An+B\rfloor (Cn+D)+ \{b(Cn+D)\}$. Applying the fast decay of $e^{-\pi w^2}$, we split the integral into two pieces according to $|w|\le |\log \varepsilon|$ or not. We bound the latter part trivially and handle the former part with the mean value theorem for integrals. Consequently, we are led to treat 
	\begin{eqnarray*}
		f(S_\alpha^n (t_0, \overline{g_0})) 
		=  2|\log \varepsilon| e^{\delta^2\pi/4} 
		e^{-\pi w_0^2} \sum_{|b|< \sqrt{|\log \varepsilon|}} e\big( (b+Q_1(n)+\tfrac{\delta}2)w_0 + Q_2(n))  +O(\varepsilon)
	\end{eqnarray*}
	for some $w_0\in [-|\log\varepsilon|, |\log\varepsilon|]$. These $Q_j$'s are bracket polynomials defined as in Green-Tao \cite[\S 5]{GT}. Apply the estimate in \cite[p.559]{GT} to $F(\{x\}) = e(\{x\})=e(x)$ and $p(n) = w_0 Q_1(n)+Q_2(n)$, we infer that
	\begin{eqnarray*}
		\frac1N \sum_{n\le N} \mu (n) e(p(n)) = \mathbb{E}_{n\in [N]} \mu(n) F(\{p(n)\}) \ll_{A,\varepsilon} \log^{-A}N, \quad  \forall \ A >1.
	\end{eqnarray*}
	This implies that the left side of \eqref{smo} is $\ll \varepsilon N$ for all sufficiently large $N$, and hence \eqref{smo}  follows.

	\section{Initial steps for the proof of Theorem~\ref{prop}}\label{s8}
	
	Following the argument in \cite{HLW}, we verify the sub-polynomial property of the measure complexity (see \eqref{sp})  as follows: Let  $\varepsilon>0$ be arbitrarily small, $L>\varepsilon^{-1}$ a large enough number and still write $\cQ=\{q_k\}_{k\ge 0}$ as in \S\S~\ref{qk}. We mesh $\T\times \Gamma\backslash G$ with the family  of grid points\footnote{The choice here is different from \cite{HLW} and \cite{MW}.},
	\begin{eqnarray*}
		F_\varepsilon (k)=\Big\{\Big(\frac{j\varepsilon}{q_k^2 L},\overline{\begin{pmatrix}\begin{smallmatrix}    
					1 & \frac{j_2}{q_k L} & \frac{j_3}{q_k L} \\ & 1 & \frac{j_1}{q_k L} \\ & & 1 \end{smallmatrix}\end{pmatrix}} \Big): 0\le j\le \frac{q_k^2 L}\varepsilon -1, 0\le j_1,j_2,j_3 \le q_k L -1\Big\}.
	\end{eqnarray*}
	Later on we shall choose a suitable infinite set of $n$'s, with $n$ explicitly in terms of $q_k$,  such that each grid box can lie in a $\bar{d}_n$-open ball of diameter $\varepsilon$. (See \eqref{bard} for $\bar{d}_n$.) As these grid boxes cover the whole space, their total (probability) measure is 1. In view of \eqref{mc}, 
	\begin{equation}\label{eq8.1}
		s_n({\rm X}, S_{\alpha
		}, d, \nu,\varepsilon) \le \#F_\varepsilon (k) \le \frac{q_k^2 L}{\varepsilon} (q_kL )^3 = \varepsilon^{-1} q_k^5 L^4.
	\end{equation}
	The $n$'s will be chosen so that $\varepsilon^{-1} q_k^5 L^4 \ll n^\tau$, $\forall$ $\tau> 0$.  
	
	We separate into two cases according as 
	\begin{equation}\label{eqqB}
		\cQ_B:=\{ q_\ell^{B-1}: q_\ell\in \cQ \mbox{ satisfies } q_\ell^B< q_{\ell+1}\}
	\end{equation}
	is infinite or not. The value of $B$ will be specified later.

	As in \cite{HLW} and \cite{MW}, we will construct a map $R:(\T\times \Gamma\backslash G, S_{\alpha})\to (\T\times \Gamma\backslash G, S_{\alpha})$  and show a sub-polynomial measure complexity of the flow associated with $R^{-1} \circ S_\alpha \circ R $ in lieu of $S_\alpha$.
	
	\subsection{Construction of $R$}\label{ss8.1}
	
	We start with the decomposition of the smooth function $\phi$ on $\R/\Z$ into resonant part $\phi^+$ and non-resonant part $\phi^-$ as in \cite{HLW}.
	Explicitly, we fix any $B> 2$ and set
	\begin{eqnarray*}
		M_1(B)&=& \bigcup_{q_k\in \cQ\atop 1< q_k< q_{k+1}^{1/B}} \{m\in \Z: q_k\le |m|< q_{k+1}, \ q_k\big|m\} \cup \{0\},\\
		M_2(B)&=& \Z\setminus M_1(B) \\
		&=& \bigcup_{q_k\in \cQ\atop 1< q_k< q_{k+1}^{1/B}} \{m\in \Z: q_k\le |m|< q_{k+1}, q_k\!\not\big|\,m\}  \\
		& &\mbox{ }\cup \bigg(\{m\in \Z: 1\le |m|< q_1\} \cup \bigcup_{q_k\in \cQ\atop  q_k\ge  q_{k+1}^{1/B}} \{m\in \Z: q_k\le |m|< q_{k+1}\}\bigg).
	\end{eqnarray*}
	Then $\phi^+$ and $\phi^-$ are defined as
	\begin{eqnarray}\label{eqrp}
		\phi^+(t) =\sum_{m\in M_1(B)} \widehat{\phi}(m) e(mt)\quad \mbox{and} \quad
		\phi^-(t) =\sum_{m\in M_2(B)} \widehat{\phi}(m) e(mt).
	\end{eqnarray}
	A keypoint is that $\phi^-$ is expressible as the difference $\Delta_\alpha g_\phi(t):=g_\phi(t+\alpha)-g_\phi(t)$ of another smooth function $g_\phi$ on $\R/\Z$.
	In fact,
	\begin{eqnarray*}
		g_\phi(t) = \sum_{m\in M_2(B)} \widehat{\phi}(m) \frac{e(mt)}{e(m\alpha)-1},
	\end{eqnarray*}
	where the series  on the right side converges absolutely by \cite[Lemma 4.2]{HLW} and by the fact that $\widehat{\phi}(m)\ll_A (1+|m|)^{-A}$ for any $A>0$. Along the same line of argument in the proof (of that lemma), it follows plainly that $g_\phi(t)$ is smooth.
	
	Now we construct the map $R$ by setting
	\begin{eqnarray*}
		R: (t,\overline{g}) \mapsto \left(t,\overline{g} \begin{pmatrix}\begin{smallmatrix}
				1 & g_\varphi(t) & \\ & 1 & g_\eta(t) \\ & & 1\end{smallmatrix}\end{pmatrix} \begin{pmatrix}\begin{smallmatrix} 1 & \phantom{g_\varphi(t)} & c(t) \\ & 1 & \phantom{g_\varphi}\\ & & 1\end{smallmatrix}\end{pmatrix}\right),
	\end{eqnarray*}
	where $c(t)$ will be determined later.
	Following \S\S~\ref{ss3.1}, $R^{-1}$ maps $(l_\alpha(t),\overline{g})$  to
	\begin{eqnarray*}
		\left(l_\alpha(t),\overline{g} \begin{pmatrix}\begin{smallmatrix} 1 & -g_\varphi(l_\alpha(t)) & \\ & 1 & -g_\eta(l_\alpha(t)) \\ & & 1 \end{smallmatrix}\end{pmatrix} \begin{pmatrix}\begin{smallmatrix} 1 & \phantom{-g_\varphi(l_\alpha(t))}& g_\varphi(l_\alpha(t))g_\eta(l_\alpha(t))-c(l_\alpha(t)) \\ & 1 & \phantom{-g_\varphi(l_\alpha(t))} \\ & & 1 \end{smallmatrix}\end{pmatrix}\right),
	\end{eqnarray*}
	where $l_\alpha$ is the translation $l_\alpha(t)=t+\alpha$.
	Thus, letting $S=S_\alpha$ and applying \eqref{S}, the map $R^{-1} SR $ sends $(t,\overline{g})$ to
	\begin{eqnarray*}
		&& R^{-1} S \left(t,\overline{g} \begin{pmatrix}\begin{smallmatrix} 1 \,& g_\varphi(t) & \\ & 1 & g_\eta(t) \\ & & 1 \end{smallmatrix}\end{pmatrix} \begin{pmatrix}\begin{smallmatrix} 1 \, & \phantom{g_\varphi(t)} & c(t) \\ & 1 & \phantom{g_\varphi}\\ & & 1 \end{smallmatrix}\end{pmatrix}\right)\\
		&=& R^{-1}  \left(l_\alpha(t),\overline{g} \begin{pmatrix}\begin{smallmatrix} 1 \, & \varphi(t) +g_\varphi(t) & \\ & 1 & \eta(t)+ g_\eta(t) \\ & & 1 \end{smallmatrix}\end{pmatrix} \begin{pmatrix}\begin{smallmatrix} 1 &\phantom{\varphi(t) +g_\varphi(t)} & \psi(t)+g_\varphi(t) \eta(t)+ c(t) \\ & 1 & \phantom{\varphi(t) +g_\varphi(t)}\\ & & 1 \end{smallmatrix}\end{pmatrix}\right)\\
		&=&
		\left(l_\alpha(t) ,\overline{g} \begin{pmatrix}\begin{smallmatrix} 1 \, & \varphi^+(t)  & \\ & 1 & \eta^+(t) \\ & & 1 \end{smallmatrix}\end{pmatrix} \begin{pmatrix}\begin{smallmatrix} 1 & \phantom{\varphi(t) +g_\varphi(t)}&  \omega(t)- \big(c( l_\alpha(t))-c(t)\big) \\ & 1 & \\ & & 1 \end{smallmatrix}\end{pmatrix}\right),
	\end{eqnarray*}
	where we have used $\phi^+(t)= \phi(t) - \Delta_\alpha g_\phi(t)= \phi(t) - (g_\phi\circ l_\alpha(t)-g_\phi(t))$ and denoted
	\begin{eqnarray*}
		\omega(t)
		&:=&\psi(t)+(g_\varphi \eta)(t) -(\varphi(t)+g_\varphi(t))\cdot g_\eta\circ l_\alpha(t)+ (g_\varphi g_\eta)\circ l_\alpha(t) \\
		&=& \psi(t) +g_\varphi(t) \eta(t)- \varphi^+(t)g_\eta(t+\alpha).
	\end{eqnarray*}
	Note that $\psi, \varphi, g_\varphi, g_\eta$ are smooth functions on $\R/\Z$, so is $\omega$.  Take $c= g_\omega$, then $\Delta_\alpha c(t) = c\circ l_\alpha(t)-c(t) = \omega^-(t)$,
	where $\omega^-(t)$ is the non-resonant part of the function $\omega (t)$.
	Consequently, $T_1:=R^{-1} SR$ satisfies
	\begin{eqnarray}\label{eqT1}
		T_1(t, \overline{g}) &=& \left(l_\alpha(t) ,\overline{g} \begin{pmatrix}\begin{smallmatrix}  1 & \varphi^+(t)  &  \\ & 1 & \eta^+(t) \\ & & 1 \end{smallmatrix}\end{pmatrix}\begin{pmatrix}\begin{smallmatrix}  1 &   & \omega^+(t) \\ & 1 &  \\ & & 1 \end{smallmatrix}\end{pmatrix}\right) \nonumber\\
		&=& (l_\alpha(t) ,\overline{g}h_0(t)w_0(t)), \mbox{ say}.
	\end{eqnarray}
	
	In summary, the map $R$ is constructed as follows:
	\begin{eqnarray*}
		R: (t,\overline{g}) \mapsto \left(t,\overline{g} \begin{pmatrix}\begin{smallmatrix} 1 \, & g_\varphi(t) & g_{\omega}(t)
				\\ & 1 & g_\eta(t) \\ & & 1 \end{smallmatrix}\end{pmatrix}\right).
	\end{eqnarray*}

	\subsection{Formula for the iteration of $T_1$}\label{ss8.2}
	As in \S\S~\ref{ss4.1}, we obtain from \eqref{eqT1} that the $m$-fold composition $T_1^m$ is given by
	\begin{eqnarray}\label{eq8.2}
		T_1^m(t,\overline{g})
		&=&
		\big(l_\alpha^m (t), \overline{g}(h_0h_1\cdots h_{m-1})\cdot (w_0w_1\cdots w_{m-1})(t)\big)\nonumber\\
		&=&
		\left(l_\alpha^m (t), \overline{g} \begin{pmatrix}\begin{smallmatrix} 1 \, & \Phi_m(t)   & \Psi_m(t) \\ & 1 & \xi_m(t)  \\ & & 1 \end{smallmatrix}\end{pmatrix}\right),
	\end{eqnarray}
	where, in this case,
	\begin{eqnarray*}
		\Phi_m(t)&=& \sum_{r=0}^{m-1}\varphi^+\circ l_\alpha^r(t) = \sum_{r=0}^{m-1}\varphi^+(t+r\alpha),
		\\
		\xi_m(t)&=& \sum_{r=0}^{m-1} \eta^+\circ l_\alpha^r (t) = \sum_{r=0}^{m-1}\eta^+(t+r\alpha),
		\\
		\Psi_m(t)&=& \Omega_m(t)  + H_m(t) = \sum_{r=0}^{m-1}\omega^+(t+r\alpha) + \sum_{j=1}^{m-1} \sum_{r=0}^{j-1} \varphi^+(t+r\alpha) \cdot\eta^+(t+j\alpha).
	\end{eqnarray*}
	
	

	\section{Proof of Theorem~\ref{prop}}
	
	Given any small $\varepsilon>0$ and  $L>\varepsilon^{-1}$ a large enough number. Let $\tau\in (0,1)$ be any arbitrarily small number and define $B=6\tau^{-1}+1$. 
	
	To finish the proof of Theorem~\ref{prop}, 
	we will show in \S\S~\ref{QBinfty} and \S\S~\ref{QBfinite} that the measure complexity of $(\mathbb{T}\times\Gamma\backslash G, S_{\alpha}, \rho)$ fulfills \eqref{sp} for infinite or finite $\mathcal{Q}_B$ defined in \eqref{eqqB}.
	
	\subsection{Case of $\#\cQ_B = \infty$}\label{QBinfty}
	Based on the explanation in Section~\ref{s8}, our task is to show the following:
	for all large $n_k=q_k^{B-1}\in \cQ_B$, for any adjacent pair $(t,h), (t^*,h^*)\in F_\varepsilon (k)$ so that   $|t-t^*|\le \varepsilon/ (q_k^2 L)$ and $\max(|x-x^*|, |y-y^*|,|z-z^*|) \le 1/(q_k L)$ if $h=\begin{pmatrix}\begin{smallmatrix} 1 & y & z \\ & 1 & x\\ & & 1 \end{smallmatrix}\end{pmatrix}$ and $h^*=\begin{pmatrix}\begin{smallmatrix} 1 & y^* & z^* \\ & 1 & x^*\\ & & 1\end{smallmatrix}\end{pmatrix}$, we have
	\begin{eqnarray}\label{eqdbar}
		\overline{d}_{n_k}((t,h), (t^*,h^*))<\varepsilon ,
	\end{eqnarray}
	where for $T_1$ defined as in \eqref{eqT1},
	\begin{eqnarray}\label{eqdbar2}
		\overline{d}_{n_k} ((t,h), (t^*,h^*)):= \frac1{n_k} \sum_{m=0}^{n_k-1} d(T_1^m(t,h), T_1^m(t^*,h^*)).
	\end{eqnarray}
	This will lead to \eqref{sp} by \eqref{eq8.1} because for any $\varepsilon>0$,
	$$
	\liminf_{k\to \infty} \frac{\#F_\varepsilon (k)}{n_k^\tau}
	\le \liminf_{k\to \infty} \frac{\varepsilon^{-1} q_k^5 L^4}{q_k^{(B-1)\tau}}
	=\liminf_{k\to \infty} \frac{\varepsilon^{-1}L^4}{q_k}  =0, \quad \mbox{$\forall$ $\tau >0$}.
	$$
	
	To verify \eqref{eqdbar}, we firstly compute $d(T_1^m (t,h), T_1^m(t^*,h^*))$ using \S\S~\ref{ss3.2} with 
	$$
	g=h, \ g^*=h^*, \ Y=   
	\begin{pmatrix}\begin{smallmatrix}
			1 & \Phi_m(t)   & \Psi_m(t) \\ & 1 & \xi_m(t)  \\ & & 1
	\end{smallmatrix} \end{pmatrix} , \ 
	Y^* =   \begin{pmatrix}\begin{smallmatrix} 1 & \Phi_m(t^*)   & \Psi_m(t^*) \\ & 1 & \xi_m(t^*)  \\ & & 1\end{smallmatrix}\end{pmatrix} .
	$$
	Indeed,
	\begin{eqnarray}\label{eqdm}
		&& d(T_1^m (t,\overline{h}), T_1^m(t^*,\overline{h^*}))\nonumber\\
		&\le & \|t-t^*\| + d_{\Gamma\backslash G} (\overline{hY}, \overline{h^*Y^*})\nonumber\\
		&\le& \|t-t^*\| +  (1+|\Phi_m(t)|+ |y|) |x-x^*|+  (1+|\xi_m(t)|) |y-y^*| + |z-z^*| \nonumber\\
		& & +\big(1+|\Phi_m(t)|\big)|\xi_m(t)-\xi_m(t^*)|+|\Phi_m(t)-\Phi_m(t^*)| +  \|\Psi_m(t)-\Psi_m(t^*)\|\nonumber\\ 
		&\le & \frac{\varepsilon}{q_k^2 L} + \frac1{q_k L} (4+|\xi_m(t)|+|\Phi_m(t)|)\nonumber\\
		&& +\big(1+|\Phi_m(t)|\big)|\xi_m(t)-\xi_m(t^*)|+|\Phi_m(t)-\Phi_m(t^*)| +  \|\Psi_m(t)-\Psi_m(t^*)\|.
	\end{eqnarray}
	
	By (the argument in) \cite[Lemma 4.3]{HLW}, we obtain that  for any smooth function $\phi$ on $\R/\Z$,  there is a constant $c_B>0$ such that for any $q\in \cQ$ with $q^{B-1}\in \cQ_B$,
	\begin{eqnarray}\label{eqCS}
		\Big|\frac1{q} \sum_{j=0}^{q} \phi\circ l_\alpha^j(t) - \widehat{\phi}(0)\Big| \le c_B q^{-B}.
	\end{eqnarray}
	Consequently, since $\widehat{\varphi^+}(0) = \widehat{\varphi}(0)=0$ and $\widehat{\eta^+}(0)= \widehat{\eta}(0)=0$,  for $0\le m< n_k$ we have
	\begin{eqnarray}\label{eqPx}
		|\Phi_m(t)| + |\xi_m(t)| 
		&\le&  m(|\widehat{\varphi^+}(0)|+ |\widehat{\eta^+}(0)|)+ 2c_B \frac{n_k}{q_k} q_k^{1-B} + q_k (|\varphi^+|_\infty+ |\eta^+|_\infty) \nonumber\\
		&\ll& \frac1{q_k}+  q_k (|\varphi^+|_\infty+ |\eta^+|_\infty)
	\end{eqnarray}
	and by \eqref{eqdm},
	\begin{eqnarray}\label{eqdm2}
		d(T_1^m (t,\overline{h}), T_1^m(t^*,\overline{h^*})) 
		&\ll & \frac1L(1+|\varphi^+|_\infty+ |\eta^+|_\infty)
		+\big( 1+ |\Phi_m(t)|\big)|\xi_m(t)-\xi_m(t^*)|  \nonumber\\
		&& 
		\ \ +|\Phi_m(t)-\Phi_m(t^*)| +  |\Psi_m(t)-\Psi_m(t^*)|.
	\end{eqnarray}
	The first summand on the right-side is $< \varepsilon$ when a sufficiently large $L$ is chosen.
	It remains to evaluate the remaining three summands in \eqref{eqdm2}. 
	
	By \eqref{eqCS}, we see that
	\begin{eqnarray}\label{eqphi}
		\Big|\sum_{j=0}^{m-1} \phi\circ l_\alpha^j(t) - \sum_{j=0}^{m-1} \phi\circ l_\alpha^j(t^*) \Big|
		\le 2 c_B q_k^{-1} + q_k |\phi'|_\infty |t-t^*|,
	\end{eqnarray}
	which will be applied to $\Phi_m(t)-\Phi_m(t^*)$ and $\Omega_m(t)-\Omega_m(t^*)$. (Recall $\Psi_m= \Omega_m + H_m$.) However for our purpose, the estimate in \eqref{eqphi} for $\xi_m(t)-\xi_m(t^*)$ is not enough in view of \eqref{eqPx}, and as well, it is not applicable to $H_m(t)-H_m(t^*)$. Therefore we need another auxiliary result, which is Lemma~\ref{lem1} in the final section. Applying Lemma~\ref{lem1} with $B=6\tau^{-1}+1$ to $H_m$ and \eqref{eqphi} to $\Omega_m$, we have, as $\Psi_m= \Omega_m+H_m$,
	$$
	\Psi_m(t)-\Psi_m(t^*)\ll \frac1{q_k} + (q_k^2 +q_k|{\omega^+}'|_\infty)|t-t^*|\ll q_k^{-1} + q_k^2|t-t^*|.
	$$
	The same upper bound applies to $\big( 1+ |\Phi_m(t)|\big)|\xi_m(t)-\xi_m(t^*)|$ by Lemma~\ref{lem1}.
	Inserting them and the estimates from \eqref{eqphi} with $\phi = \varphi^+$ and $\eta^+$
	into \eqref{eqdm2}, we get that
	\begin{eqnarray*}
		d(T_1^m (t,\overline{h}), T_1^m(t^*,\overline{h^*}))  
		&\ll & \frac1L(1+|\varphi^+|_\infty+ |\eta^+|_\infty) + q_k^{-1} + q_k^2 |t-t^*| \ll \varepsilon.
	\end{eqnarray*}
	In view of \eqref{eqdm} and \eqref{eqdbar2}, we complete the establishment of \eqref{eqdbar}.

	\subsection{Case of finite $\#\cQ_B$}\label{QBfinite}
	If $\#\cQ_B$ is finite, then $\Z\setminus M_2(B) = M_1(B)$ is also finite and hence by \cite[Lemma 4.2]{HLW}, 
	\begin{eqnarray*}
		\widetilde{g}_\varphi(t) = \sum_{m\neq0} \widehat{\varphi}(m) \frac{e(mt)}{e(m\alpha)-1},
		\qquad
		\widetilde{g}_\eta(t) = \sum_{m\neq0} \widehat{\eta}(m) \frac{e(mt)}{e(m\alpha)-1}
	\end{eqnarray*}
	are absolutely convergent, and as functions in $t$, $\widetilde{g}_\varphi(t)$ and 
	$\widetilde{g}_\eta(t)$ are continuous and periodic with period $1$.
	Under the conditions $\widehat{\varphi}(0)=\widehat{\eta}(0)=0$, the process in \S\S~\ref{ss8.1} and \S\S~\ref{ss8.2} will transform the flow to an easily manageable measurable isomorphic flow. 
	
	In fact, now we have 
	\begin{eqnarray*}
		\varphi(t)= \widetilde{g}_\varphi\circ l_\alpha(t) - \widetilde{g}_\varphi(t),
		\qquad
		\eta(t)= \widetilde{g}_\eta\circ l_\alpha(t) - \widetilde{g}_\eta(t),
	\end{eqnarray*}
	for which $\widehat{\varphi}(0)=\widehat{\eta}(0)=0$ is applied. Set
	\begin{eqnarray*}
		\varpi(t):=\psi(t)+(\widetilde{g}_\varphi \eta)(t) -(\varphi(t)+\widetilde{g}_\varphi(t))\cdot \widetilde{g}_\eta\circ l_\alpha(t)+ (\widetilde{g}_\varphi \widetilde{g}_\eta)\circ l_\alpha(t) ,
	\end{eqnarray*}
	which is smooth and of period 1 as $\psi, \varphi, \widetilde{g}_\varphi, \widetilde{g}_\eta$ are smooth functions on $\R/\Z$.  Then,
	\begin{eqnarray*}
		\varpi(t)=\widehat{\varpi}(0)  + \widetilde{g}_\varpi\circ l_\alpha(t) - \widetilde{g}_\varpi(t)
	\end{eqnarray*}
	and the inverse of the map
	\begin{eqnarray*}
		\widetilde{R}: (t,\overline{g}) \mapsto
		\left(t,\overline{g} \begin{pmatrix}\begin{smallmatrix} 1 \, & \widetilde{g}_\varphi(t) & \\ & 1 & \widetilde{g}_\eta(t) \\ & & 1 \end{smallmatrix}\end{pmatrix} \begin{pmatrix}\begin{smallmatrix} 1 &  \phantom{\widetilde{g}_\varphi(t)} & \widetilde{g}_\varpi(t) \\ & 1 &  \phantom{\widetilde{g}_\varphi}\\ & & 1 \end{smallmatrix}\end{pmatrix}\right)
	\end{eqnarray*}
	sends $(t,\overline{g})$ to
	\begin{eqnarray*}
		\left(t,\overline{g} \begin{pmatrix}\begin{smallmatrix} 1 \, & -\widetilde{g}_\varphi(t) & \\ & 1 & -\widetilde{g}_\eta(t) \\ & & 1 \end{smallmatrix}\end{pmatrix} \begin{pmatrix}\begin{smallmatrix} 1 & \phantom{-\widetilde{g}_\varphi(t)}& \widetilde{g}_\varphi(t)\widetilde{g}_\eta(t)-\widetilde{g}_\varpi(t) \\ & 1 & \phantom{\widetilde{g}_\varphi}\\ & & 1 \end{smallmatrix}\end{pmatrix}\right).
	\end{eqnarray*}
	Thus, $\tilde{T}_1:=\widetilde{R}^{-1} S\widetilde{R} $ maps $(t,\overline{g})$ to
	\begin{eqnarray*}
		&& \widetilde{R}^{-1} S \left(t,\overline{g} \begin{pmatrix}\begin{smallmatrix}  1 \, & \widetilde{g}_\varphi(t) & \\ & 1 & \widetilde{g}_\eta(t) \\ & & 1 \end{smallmatrix}\end{pmatrix} \begin{pmatrix}\begin{smallmatrix}  1 & \phantom{\widetilde{g}_\varphi(t)} & \widetilde{g}_\varpi(t) \\ & 1 &  \phantom{\widetilde{g}_\varphi}\\ & & 1 \end{smallmatrix}\end{pmatrix}\right)\\
		&=& \widetilde{R}^{-1}  \left(l_\alpha(t),\overline{g} \begin{pmatrix}\begin{smallmatrix}  1 \, & \varphi(t) +\widetilde{g}_\varphi(t) & \\ & 1 & \eta(t)+ \widetilde{g}_\eta(t) \\ & & 1 \end{smallmatrix}\end{pmatrix} \begin{pmatrix}\begin{smallmatrix}  1 &  \phantom{\widetilde{g}_\varphi(t)} & \psi(t)+\widetilde{g}_\varphi(t) \eta(t)+ \widetilde{g}_\varpi(t) \\ & 1 &  \phantom{\widetilde{g}_\varphi} \\ & & 1 \end{smallmatrix}\end{pmatrix}\right)\\
		&=&
		\left(l_\alpha(t) ,\overline{g}
		\begin{pmatrix}\begin{smallmatrix}  1 &  \phantom{\widetilde{g}_\varphi(t)} &  \varpi(t)- \big(\widetilde{g}_\varpi\circ l_\alpha(t)-\widetilde{g}_\varpi(t)\big) \\ & 1 & \\ & & 1 \end{smallmatrix}\end{pmatrix}\right)\\
		&=& \left(l_\alpha(t) ,\overline{g}
		\begin{pmatrix}\begin{smallmatrix} 
				1 &  \phantom{\widetilde{g}_\varphi}  &  \widehat{\varpi}(0)
				\\ & 1 &
				\\ & & 1
		\end{smallmatrix}\end{pmatrix}
		\right),
	\end{eqnarray*}
	where $S=S_\alpha$.
	By  Proposition~\ref{prop2.3}, 
	the measure complexity of $(\mathbb{T}\times\Gamma\backslash G, S, \rho)$ is sub-polynomial, i.e. satisfying \eqref{sp}, if and only if the measure complexity of $(\mathbb{T}\times\Gamma\backslash G, \tilde{T}_1, v)$ with $v=\rho\circ \tilde{R}$  is sub-polynomial. From \S\S~\ref{ss3.2}, the metric $d$ is seen to be invariant under $\tilde{T}_1$, so for any $n\geq1$ and $\varepsilon>0$, 
	$$
	s_{n}(\mathbb{T}\times\Gamma\backslash G, \tilde{T}_1, d, v, \varepsilon)=s_{1}(\mathbb{T}\times\Gamma\backslash G, \tilde{T}_1, d, v, \varepsilon).
	$$
	Since $\mathbb{T}\times\Gamma\backslash G$ is compact, we have $s_{1}(\mathbb{T}\times\Gamma\backslash G, \tilde{T}_1, d, v, \varepsilon)<\infty$ and consequently
	$$
	\lim_{n\rightarrow\infty}\frac {s_{n}(\mathbb{T}\times\Gamma\backslash G, \tilde{T}_1, d, v, \varepsilon)}{n^\tau}=\lim_{n\rightarrow\infty}\frac {s_{1}(\mathbb{T}\times\Gamma\backslash G, \tilde{T}_1, d, v, \varepsilon)}{n^\tau}=0.
	$$
	Hence the measure complexity of $(\mathbb{T}\times\Gamma\backslash G, S, \rho)$ satisfies \eqref{sp} for finite $\#\cQ_B$ as well.

	\section{A technical lemma}
	\begin{lemma}\label{lem1} Let $B>2$. There are constants $c_B',c_B''>0$, independent of $q_k$, such that
		\begin{eqnarray*}
			|H_m(t^*)-H_m(t)| &\le& c_B'\big( \frac1{q_k^2} + q_k^2 |t-t^*|\big),\\
			|\xi_m(t^*)-\xi_m(t)| &\le& c_B''\big( \frac1{q_k^2} + q_k |t-t^*|\big)
		\end{eqnarray*}
		for all  $1\le m\le n_k (= q_k^{B-1}\in \mathcal{Q}_B)$. 
	\end{lemma}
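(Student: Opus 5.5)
The plan is to expand everything in Fourier series and use two facts. For a frequency $u$ with $0<|u|<q_k$, \eqref{p2} gives $\|u\alpha\|\ge\|q_{k-1}\alpha\|\gg q_k^{-1}$. For large frequencies, the Fourier coefficients of the smooth functions $\varphi^+,\eta^+$ decay faster than any power, since their coefficients are those of $\varphi,\eta$ restricted to $M_1(B)$. The point is that a term is harmless either because all its frequencies are $<q_k/2$, so the denominators are $\ll q_k$, or because some frequency is $\ge q_k/2$, so its coefficient is $\ll_A q_k^{-A}$. In the latter case even the trivial bound by a power of $m\le n_k=q_k^{B-1}$ is absorbed by taking $A$ large in terms of $B$. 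The resonance structure of $M_1(B)$ is not really needed beyond smoothness.

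\emph{Bound for $\xi_m$.} Write $\xi_m(t)=\sum_{0\ne u}\widehat{\eta^+}(u)e(ut)\frac{1-e(mu\alpha)}{1-e(u\alpha)}$, so that
\[
\xi_m(t^*)-\xi_m(t)=\sum_{u\neq0}\widehat{\eta^+}(u)\big(e(ut^*)-e(ut)\big)\frac{1-e(mu\alpha)}{1-e(u\alpha)} .
\]
For $0<|u|<q_k$, bound $|e(ut^*)-e(ut)|\ll|u||t-t^*|$ and the quotient by $\|u\alpha\|^{-1}\ll q_k$. Since $\sum|u||\widehat{\eta}(u)|<\infty$, this part is $\ll q_k|t-t^*|$. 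For $|u|\ge q_k$, bound $|e(ut^*)-e(ut)|\le2$ and the quotient trivially by $m\le q_k^{B-1}$. With $\widehat{\eta}(u)\ll_A|u|^{-A}$ and $A=B+2$, this part is $\ll q_k^{B-1}\sum_{|u|\ge q_k}|u|^{-A}\ll q_k^{-2}$.

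\emph{Bound for $H_m$.} Use \eqref{H} with $\varphi^+,\eta^+$ and the closed form \eqref{w1} for the inner double sum $W_m(u,v)$. The diagonal $u+v=0$ contributes a term independent of $t$, so it cancels in $H_m(t^*)-H_m(t)$. The difference then becomes
\[
\sum_{\substack{u,v\ne0\\ u+v\ne0}}\widehat{\varphi^+}(u)\widehat{\eta^+}(v)\big(e((u+v)t^*)-e((u+v)t)\big)W_m(u,v).
\]
Split into two ranges.
\begin{itemize}
\item If $\max(|u|,|v|)\ge q_k/2$, use $|W_m|\le m^2\le q_k^{2B-2}$ and $|\widehat{\varphi}(u)\widehat{\eta}(v)|\ll_A(|u||v|)^{-A}$, where one of $|u|,|v|$ is $\ge q_k/2$. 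Choosing $A\ge 2B+1$ makes this range $\ll q_k^{-2}$.
\item If $|u|,|v|<q_k/2$, then $0<|u+v|<q_k$. So all three of $\|u\alpha\|,\|v\alpha\|,\|(u+v)\alpha\|$ are $\gg q_k^{-1}$, and \eqref{w1} gives $|W_m(u,v)|\ll q_k^2$. Combined with $|e((u+v)t^*)-e((u+v)t)|\ll(|u|+|v|)|t-t^*|$ and the absolute convergence of $\sum(|u|+|v|)|\widehat{\varphi}(u)\widehat{\eta}(v)|$, this range is $\ll q_k^2|t-t^*|$.
\end{itemize}
The constants depend only on $B$ and on $\varphi,\eta$, not on $q_k$ or $m$, as required. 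Small $q_k$ are absorbed into $c_B'$ and $c_B''$.

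\emph{Main obstacle.} The delicate point is the second range: one must make sure no denominator $\|(u+v)\alpha\|$ with $|u+v|$ in $[q_k,q_{k+1})$ sneaks in. There the only available bound is $\|(u+v)\alpha\|\gg q_{k+1}^{-1}$, which is far too weak since $q_{k+1}>q_k^B$. This is exactly why the cut is made at $q_k/2$ rather than $q_k$, and why the diagonal $u+v=0$, where \eqref{w1} is not valid, must be removed first via the cancellation in $t$. I would double-check that \eqref{w1} indeed has only the denominators $1-e(u\alpha)$, $1-e(v\alpha)$ and $1-e((u+v)\alpha)$, so that no further small divisor appears.
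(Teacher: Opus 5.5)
Your proof is correct, and it follows a genuinely different and shorter route than the paper's.

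\emph{The paper's route.} It follows the Huang--Liu--Wang scheme and keeps the resonant block structure of $M_1(B)$, namely $q_a\le |u|<q_{a+1}$ with $q_a\mid u$. This gives $\|u\alpha\|^{-1}\ll q_aq_{a+1}/|u|\ll q_k^2/|u|$. It then splits $m=a_kq_k+{\rm b}$ into full $q_k$-blocks plus a remainder. Each full block is bounded via $|e(q_k(u+v)\alpha)-1|\ll |u+v|/q_{k+1}$, with a case split through \eqref{p1} to handle $\|(u+v)\alpha\|$. Finally the gap $q_{k+1}>q_k^B$, i.e.\ $q_k^{B-1}\in\mathcal{Q}_B$, is used to absorb the $a_k\le q_k^{B-2}$ blocks.

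\emph{Your route.} You bypass all of this with two observations. First, the closed form \eqref{w1} is bounded uniformly in $m$ by $\ll \|u\alpha\|^{-1}\big(\|v\alpha\|^{-1}+\|(u+v)\alpha\|^{-1}\big)$; the denominators you asked to double-check are indeed the only ones. Second, by \eqref{p2} every integer $0<|w|<q_k$ satisfies $\|w\alpha\|>1/(2q_k)$. Your cut at $q_k/2$ keeps $u+v$ in that range as well.

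\emph{What your route buys.} It needs only three inputs: $\widehat{\varphi}(0)=\widehat{\eta}(0)=0$, rapid decay of the coefficients, and $m\le q_k^{B-1}$. So both estimates hold for every $q_k\in\mathcal{Q}$, and even with $\varphi,\eta$ in place of $\varphi^+,\eta^+$. This makes clear that the gap condition defining $\mathcal{Q}_B$ is needed in \S\S~\ref{QBinfty} only for \eqref{eqCS}--\eqref{eqPx}, i.e.\ for bounding $|\Phi_m(t)|$ and $|\xi_m(t)|$ themselves, not for these difference estimates. The paper's argument stays inside the block framework it already uses for \eqref{eqCS}, but for this lemma that extra structure is unnecessary.

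One point you should state explicitly: the restriction to $u,v\neq 0$ in your expansions is exactly where the zero-mean hypothesis enters. Without it, $W_m(u,0)$ and $W_m(0,v)$ can grow linearly in $m$, and the uniform bound $|W_m|\ll q_k^2$ would fail.
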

	\begin{proof}
		By the definitions of $H_m(t)$ (below \eqref{eq8.2}) and the resonant parts \eqref{eqrp}, we have 
		\begin{eqnarray*}
			H_m(t) &=& \sum_{l=1}^{m-1} \sum_{r=0}^{l-1} \varphi^+(t+r\alpha) \cdot\eta^+(t+l\alpha)\\
			&=& \sum_{\substack{q_a,q_c\in \cQ\\ 1< q_a < q_{a+1}^{1/B}\\ 1< q_c < q_{c+1}^{1/B}}} \sum_{\substack{q_a\le |u|< q_{a+1}, \, q_a|u\\ q_c\le |v|< q_{c+1}, \, q_c|v}}
			\widehat{\varphi}(u) \widehat{\eta}(v)
			\sum_{l=1}^{m-1} \sum_{r=0}^{l-1} e((ur+vl)\alpha) e((u+v)t)
		\end{eqnarray*}
		under the condition $\widehat{\varphi}(0)=\widehat{\eta}(0)=0$.
		The constant term (i.e. independent of $t$) of $H_m(t)$ comes from summands with $u+v =0$, and it will not appear in $H_m(t^*)-H_m(t)$ due to cancellation. Hence we can add the constraint $u+v\neq 0$ when estimating the multiple sum.  
		As $m\le n_k= q_k^{B-1}$, we express $m=a_kq_k+{\rm b}$, where $0\le {\rm b}< q_k$, and invoke the rapid decay  $\widehat{\varphi}(u)\ll (1+|u|)^{-2B-2}$ and $ \widehat{\eta}(v)\ll (1+|v|)^{-2B-2} $ to derive that
		\begin{eqnarray}\label{eqHtail}
			\mbox{ }\quad \sum_{\substack{q_a,q_c\in \cQ\\ a \ {\rm or } \ c \ge k}} + \sum_{\substack{q_a,q_c\in \cQ\\ a ,\, c < k}} \sum_{|u+v|\ge q_k}
			&\ll& 
			\sum_{\substack{q\in \cQ\\ q\ge q_k}} \sum_{u \ge q} |u|^{-2B-2} m^2
			+
			\sum_{\substack{q\in \cQ}} \sum_{u \gg \max(q_k, q)} |u|^{-2B-2} m^2 \\
			&\ll& q_k^{-2}, \nonumber  
		\end{eqnarray}
		the $m^2$ in which comes from the trivial bound on the double summation over $l,r$.
		We denote the non-constant remnant by
		$$
		H_m^\circ (t) :=
		\sum_{\substack{1< q_a,q_c\in \cQ\\ q_a < q_{a+1}^{1/B}\le q_k^{1/B}\\ q_c < q_{c+1}^{1/B}\le q_k^{1/B}}} \sum_{\substack{q_a\le |u|< q_{a+1}, \, q_a|u\\ q_c\le |v|< q_{c+1}, \, q_c|v\atop  0< |u+v|< q_k}}
		\widehat{\varphi}(u) \widehat{\eta}(v)  e((u+v)t)
		\sum_{l=1}^{m-1} \sum_{r=0}^{l-1} e((ur+vl)\alpha).
		$$
		
		Now we split the double summation over $(l,r)$ inside $H_m^\circ (t)$  into
		\begin{eqnarray*}
			\sum_{{\rm a}=0}^{a_k-1} \sum_{l={\rm a} q_k+1}^{({\rm a}+1)q_k} \sum_{r=0}^{l-1} + \sum_{l=a_kq_k+1}^{a_kq_k+{\rm b}} \sum_{r=0}^{l-1}=: \sum_{{\rm a} =0}^{a_k-1} \mbox{$\sum_{\rm a}$} + \mbox{$\sum_{\rm b}$}, \mbox{ say}.
		\end{eqnarray*}
		A direct calculation gives that
		\begin{eqnarray*}
			\mbox{$\sum_{\rm a}$} &=& \sum_{l={\rm a} q_k+1}^{({\rm a}+1)q_k} \sum_{r=0}^{l-1}e((ur+vl)\alpha)\\
			&=& \frac{e(({\rm a}q_k+1)(u+v)\alpha)\big(e(q_k(u+v)\alpha)-1\big)}{\big(e(u\alpha)-1\big)\big(e((u+v)\alpha)-1\big)}
			- \frac{e(({\rm a}q_k+1)v\alpha)\big(e(q_k v\alpha)-1\big)}{\big(e(u\alpha)-1\big)\big(e(v\alpha)-1\big)}, \end{eqnarray*}
		so
		\begin{eqnarray*}
			\big|\mbox{$\sum_{\rm a}$} \big|
			\le  \frac{\big|e(q_k(u+v)\alpha)-1\big|}{\big|e(u\alpha)-1\big|\cdot \big|e((u+v)\alpha)-1\big|}   + \frac{\big|e(q_k v\alpha)-1\big|}{\big|e(u\alpha)-1\big| \cdot \big|e(v\alpha)-1\big|}.
		\end{eqnarray*}
		
		We make use of the ideas and arguments in \cite{HLW}:
		If $1<q_a\le |u|< q_{a+1}$ and $q_a \big| |u|$, then letting $|u|=dq_a$ ($<q_{a+1}$), we obtain, by \eqref{p2}, $(2q_{a+1})^{-1}< \|q_a\alpha\| \le \|u\alpha\|=\|dq_a\alpha\| \le d\| q_a\alpha\|<d/q_{a+1}<1/q_a\le \frac12$, implying $ \|dq_a\alpha\|= d\|q_a \alpha\|$ (from $d\|q_a\alpha\|<\frac12$) and 
		\begin{eqnarray}\label{equa}
			\frac1{\big|e(u\alpha)-1\big|}&\ll& \|u\alpha \|^{-1} = (d\|q_a \alpha\|)^{-1}\le \frac{2q_{a+1}}d= \frac{2q_aq_{a+1}}{|u|}\\
			&\ll& \frac{q_k^2}{|u|} \quad \mbox{  when  $a<k$.}\nonumber
		\end{eqnarray}
		Easily, with  \eqref{p2},
		$$
		\big|e(q_k(u+v)\alpha)-1\big|\ll |u+v|\|q_k\alpha\| < \frac{|u+v|}{q_{k+1}}.
		$$
		To take $|e((u+v)\alpha)-1|^{-1}$ into account, we separate into two cases:
		\begin{itemize}
			\item
			If $\|(u+v)\alpha\|\ge 1/(2|u+v|)$, then we get immediately
			\begin{eqnarray*}
				\frac{\big|e(q_k(u+v)\alpha)-1\big|}{\big|e(u\alpha)-1\big|\cdot \big|e((u+v)\alpha)-1\big|}
				\ll \frac{q_k^2|u+v|^2}{|u|} \frac1{q_{k+1}} < \frac{q_k^4}{|u|} \frac1{q_{k+1}}.
			\end{eqnarray*}
			\item
			If $ 0< \|(u+v)\alpha\|< 1/(2|u+v|)$, then  $u+v\neq 0$ and $l/(u+v)=l_i/q_i$ is a convergent for some $l$  (and so $q_i\in \cQ$), by \eqref{p1}.
			In particular, $q_i\big| u+v$  and $i<k$ as $|u+v|< q_k$. By the same argument as in \eqref{equa}, we have $|e((u+v)\alpha)-1|^{-1} \ll q_k^2/|u+v|$. Thus,
			$$
			\frac{\big|e(q_k(u+v)\alpha)-1\big|}{\big|e(u\alpha)-1\big|\cdot \big|e((u+v))\alpha)-1\big|}
			\ll \frac{q_k^4}{|u|} \frac1{q_{k+1}}.
			$$
		\end{itemize}
		Also, $|e(v\alpha)-1|^{-1} \ll q_k^2/|v|$ (as in \eqref{equa}) and $|e(vq_k\alpha)-1| \ll |v|/q_{k+1}$. We infer that
		$$
		\big|\mbox{$\sum_{\rm a}$} \big|
		\ll \frac{q_k^4}{|u|} \frac1{q_{k+1}}.
		$$
		
		Similarly,
		\begin{eqnarray*}
			\big|\mbox{$\sum_{\rm b}$}\big| &=& \bigg|\bigg(\sum_{l=a_k q_k+1}^{a_kq_k+{\rm b}} \sum_{r=0}^{a_kq_k-1}+\sum_{l=a_k q_k+1}^{a_kq_k+{\rm b}} \sum_{r=a_kq_k}^{l-1}\bigg)e((ur+vl)\alpha)\bigg|\\
			&\le&  \frac{\big|e(a_kq_k u\alpha)-1\big|}{\big|e(u\alpha)-1\big|}{\rm b} +q_k^2
			\ll \frac{q_k^2}{|u|}\frac{a_k|u|}{q_{k+1}}q_k+q_k^2\ll q_k^2,
		\end{eqnarray*}
		for $a_kq_k\le n_k= q_k^{B-1}$ and $q_{k+1} > q_k^B$, in view of \eqref{eqqB}.
		
		From the above estimates and the fact $e((u+v)t)-e((u+v)t^*)\ll |u+v| |t-t^*|$, we conclude
		\begin{eqnarray*}
			H_m^\circ(t) - H_m^\circ(t^*)
			&\ll&  \sum_{1<|u|, |v|\le q_k} (|u||v|)^{-2B-2}|u+v| |t-t^*| \Big(\frac{a_k q_k^{4}}{|u|q_{k+1}}+q_k^2\Big)\\
			&\ll& q_k^2 |t-t^*|.
		\end{eqnarray*}
		Together with \eqref{eqHtail}, the case of $H_m$ is done.

		The proof for $\xi_m$ follows in the same vein. By \eqref{eqrp}, we have
		\begin{eqnarray*}
			\xi_m(t) &=& \sum_{r=0}^{m-1}  \eta^+(t+r\alpha)= \sum_{\substack{q_c\in \cQ\\ 1< q_c < q_{c+1}^{1/B}}} \sum_{\substack{q_c\le |v|< q_{c+1} \\ q_c|v}}
			\widehat{\eta}(v) e(vt)
			\sum_{r=0}^{m-1} e(vr\alpha) \\
			&=:& \xi_{m,1}(t) + \xi_{m,2}(t), \mbox{ say,}
		\end{eqnarray*}
		by splitting the sum over $q_c$ into $c<k$ or $c\ge k$ respectively. Thus
		\begin{eqnarray*}
			\xi_{m,2}(t) &\ll& \sum_{\substack{q_c\in \cQ\\ q_k\le q_c < q_{c+1}^{1/B}}}
			\sum_{\substack{q_c\le |v|< q_{c+1} \\ q_c|v}} |v|^{-2B-2} m
			\ll
			\sum_{v\ge q_k} v^{-2B-2} q_k^{B-1} \ll q_k^{-2},
		\end{eqnarray*}
		and using the fact ${|e(q_kv\alpha)-1|}/{|e(v\alpha)-1|}\ll q_k^2/q_{k+1}$ for $q_c\le |v|< q_{c+1}$,  $q_c|v$ and $c<k$,
		\begin{eqnarray*}
			\xi_{m,1}(t)-\xi_{m,1}(t^*)
			&=& \sum_{\substack{1< q_c\in \cQ\\ q_c < q_{c+1}^{1/B}\le q_k^{1/B}}}
			\sum_{\substack{q_c\le |v|< q_{c+1} \\ q_c|v}}\widehat{\eta}(v) \big(e(vt)-e(vt^*)\big)
			\sum_{r=0}^{m-1} e(vr\alpha) \\
			&\ll& \sum_{\substack{1< q_c\in \cQ\\ q_c < q_{c+1}^{1/B}\le q_k^{1/B}}}
			\sum_{\substack{q_c\le |v|< q_{c+1} \\ q_c|v}} |v|^{-2B-1} |t-t^*| \Big(\frac{n_k}{q_k} \frac{|e(q_kv\alpha)-1|}{|e(v\alpha)-1|} + q_k\Big)\\
			&\ll&|t-t^*|  q_k \sum_{v\ge 1} v^{-2B-1}\ll q_k|t-t^*| .
		\end{eqnarray*}
		The proof is complete.
	\end{proof}
	
	\subsection*{Funding} Lau is supported by GRF (No. 17317822) of  the Research Grants Council of Hong Kong and the National Natural Science Foundation of China (NSFC No. 12271458).

	\bibliographystyle{plainnat}
	
\end{document}